\newcommand{\w}{\widehat}
\newcommand{\wt}{\widetilde}
\newtheorem{theorem}{Theorem}[section]
\newtheorem{corollary}{Corollary}[theorem]
\newtheorem{lemma}[theorem]{Lemma}
\newtheorem{proposition}[theorem]{Proposition}
\newtheorem{remark}{Remark}
\newtheorem{definition}{Definition}
\title[Conjugacy Classes of Automorphisms]{Conjugacy classes of Automorphisms of the unit ball in a complex Hilbert space}
\author[R. Aggarwal, K. Gongopadhyay and M. M. Mishra  ]{Rachna Aggarwal, Krishnendu Gongopadhyay and Mukund Madhav Mishra }
\address{Department of Mathematical Sciences, Indian Institute of Science Education and Research (IISER) Mohali, Knowledge City,
S.A.S. Nagar, Sector 81, P.O. Manauli 140306, India}
\email{rachna2389@gmail.com}
\address{Department of Mathematical Sciences, Indian Institute of Science Education and Research (IISER) Mohali, Knowledge City,
S.A.S. Nagar, Sector 81, P.O. Manauli 140306, India}
\email{krishnendu@iisermohali.ac.in}
\address{Department of Mathematics, Hans Raj College, University of Delhi, Delhi, India}
\email{mukund@hrc.du.ac.in; mukund.math@gmail.com}
\numberwithin{equation}{section}
\date{}
\begin{document}
\begin{abstract}
In this article, we consider the ball model of an infinite dimensional complex hyperbolic space, i.e. the open unit ball of a complex Hilbert space  centered at the origin equipped with the Carath\'eodory metric. We consider the group of holomorphic automorphisms of the ball and classify the conjugacy classes of automorphisms.We also compute the centralizers for elements in the group of automorphisms.

\end{abstract}

\keywords{Hyperbolic space; Carath\'eodory metric;  isometry group; dynamical types; conjugacy classes; centralizer}
\subjclass[2020]{51M10; 51F25}
\maketitle
\section{Introduction}
The rank one symmetric spaces of non-compact type are given by real,  complex and the quaternionic hyperbolic spaces, and the Cayley plane. These spaces are of primary interest to mathematicians for their broad horizons over different branches of mathematical sciences.  For a uniform and gentle introduction to the first three of these spaces we refer to the article \cite{CG}.  From geometric perspective, the real hyperbolic spaces, especially in low dimensions, have seen much attention due to their importance in the theory of Kleinian groups, e.g.  \cite{JA}, \cite{eps}. The complex hyperbolic spaces also play an important role in mathematics due to their connection with lattices in Lie groups and deformation theory of discrete groups, cf.  \cite{WG}, \cite{parker1}, \cite{parker2}.  Recently quaternionic hyperbolic spaces have also seen many  investigations, e.g. \cite{kip}, \cite{kp}, \cite{kiv}.

In spite of many works on the finite dimensional hyperbolic spaces,  their infinite dimensional counterparts have not seen much attention until  Gromov suggested investigations of these infinite dimensional hyperbolic spaces from algebraic and geometrical point of views, cf.
 \cite[pp. 121]{gromov}.  In the last two decades, the literature has seen many contributions concerning the geometry of the infinite dimensional real hyperbolic space,  e.g.  \cite{bim},  \cite{dsu},  \cite{np}.  However, the geometry of the infinite dimensional complex hyperbolic space is yet to see comparable attentions.

We attempt investigation of the infinite dimensional complex hyperbolic space by studying conjugacy classes of the isometries and their centralizers in this article. We recall here that in the finite dimensional set up,  dynamical properties of the isometries are intimately related to the fixed point classification of the isometries, and this is related to the conjugacy classes.  In the finite dimensional set up, the conjugacy classification can be described completely using conjugacy invariants like the coefficients of the characteristic polynomials, cf. \cite{KG}, \cite{g}, \cite{gp}, \cite{gpp}.    However, the situation in infinite dimension is not simple not only due to lack of conjugacy invariants, but also because of complexity in spectra.  The starting point of our work would be to use the `ball model' of the complex hyperbolic space that was introduced by Franzoni and Vesentini in \cite[Chapter VI]{FV}.  They have also given a linear representation of the isometries of the ball model that we will use extensively. Using similar ideas, we also introduce a `Siegel domain model' of the infinite dimensional complex hyperbolic space. We use these different models to classify the conjugacy classes of the isometries, and obtain discriptions for the centralizers. 

In \cite{ku}, Kulkarni proposed to use the centralizers as an internal ingredient of the group of automorphisms in a geometry to classify the automorphisms into disjoint classes which roughly reflect the `dynamical types'.  This indicates the importance of the conjugacy classes of centralizers that might be implicit in geometry.  The  centralizers in finite dimensional set up have been investigated in \cite{g}, \cite{KG},   \cite{JC}, \cite{WCKG}. In this paper, we describe the centralizers  for the elliptic and the hyperbolic elements. The conjugacy classes of centralizers for such elements may be derived from this description. We also provide descriptions of the centralizers for `translations'. As corollaries, we observe when certain isometries commute to each other. 

\medskip Before stating the main results we fix some notations. 
 Let $K$ be a Hilbert space and $\left<\,,\,\right>$  denote the  inner product on $K$.   For $x \in K$,  $\left<x\right>$ will denote the linear space generated by $x$. For a subspace $M \subseteq K$, ${M}^{\perp}$ denotes the orthogonal complement of $M$ with respect to the inner product on $K$. The space of all bounded linear operators on $K$ will be denoted by $B(K)$  and for $T \in B(K)$, $\sigma(T)$ denotes the spectrum of $T$. For a set $X \subseteq K$, $\overline{X}$ denotes the closure of $X$ with respect to the norm on $K$ and $\partial{X}$, its boundary. The unit circle in $\mathbb{C}$ is denoted by $S^1$. For any two operators $T$ and $S$ in $B(K)$, $T \longleftrightarrow S$ would mean $T$ and $S$ commute with each other, i.e. $TS=ST$.   An operator $T \in B(K)$ is called unitary if $T^*=T^{-1}$ where $T^*$ is the Hilbert-adjoint of $T$  and $T$ is called  normal if $TT^*=T^*T$. It is called  non-unitary normal if it is a normal operator which is not unitary.  The group of unitary operators in $B(K)$ will be denoted by $\mathcal{U}(K)$.

 \medskip
 Let  $H$ be  a complex Hilbert space. Suppose $B$ is the open unit ball in $H$ centered at the origin equipped with the Carathe\'odory metric. The group of bi-holomorphic automorphisms of $B$ is denoted by $Aut(B)$. The elements of $Aut(B)$ are isometries for the Carath\'eodory metric. An element in $Aut(B)$ is called  {\it elliptic} if it has a fixed point inside $B$, {\it hyperbolic}   (resp. {\it parabolic}) if it is not elliptic and has exactly two  (resp. one) fixed points in  $\overline{B}$ lying on $\partial{B}$. In this article we mainly study the conjugacy classes and centralizers of elements in the group $Aut(B)$.\\ Let $G$ be the group of all bijective bounded linear transformations on $H \oplus \, \mathbb{C}$ which are isometries for the following sesquilinear form defined on $H \oplus \, \mathbb{C}$.
\begin{eqnarray}\label{eq}
\mathcal{A}\left((x, z),\,(y,w)\right)=\left<x,y\right> -z \overline{w}, \,\,\,\,\,\,\,\, (x,z),\, (y,w) \in H \oplus \, \mathbb{C}.
\end{eqnarray}
Let $\mathcal{Q}$ be the quadratic form determined by $\mathcal{A}$.
\begin{proposition}\cite[Proposition 1]{MR} \label{p}  A general element of $G$ is of the form $e^{i \theta} \left[ {\begin{array}{cc}
   UA & U(\xi) \\
  \left<\boldsymbol{\cdot},\xi\right>  & a \\
  \end{array} } \right] \,\text{where}\,\theta \in \mathbb{R}$,  $\xi \in H$, $a=\sqrt{1+\|\xi\|^2}$, $U \in \mathcal{U}(H)$ and A is a positive operator on $H$ defined by $A=I$ on $\left<\xi\right>^{\perp}$ and $A(\xi)=a\xi$.
 \end{proposition}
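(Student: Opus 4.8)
\noindent\emph{Proof strategy.} The plan is to translate membership in $G$ into a short list of operator identities for the block form of $g$, and then read off the asserted shape by means of a polar decomposition. First I would fix $g\in G$ and write it as a $2\times 2$ operator matrix relative to the splitting $H\oplus\mathbb{C}$: since each coordinate projection of $g$ is bounded and linear, there are $P\in B(H)$, vectors $v,\eta\in H$, and $a\in\mathbb{C}$ with $g(x,z)=\big(Px+zv,\ \langle x,\eta\rangle+za\big)$, the functional $x\mapsto\langle x,\eta\rangle$ arising from the Riesz representation of the bounded functional $x\mapsto$ (second coordinate of $g(x,0)$). Let $J$ be the self-adjoint involution $(x,z)\mapsto(x,-z)$, so that $\mathcal{A}(u,u')=\langle u,Ju'\rangle$; then preservation of $\mathcal{A}$ is exactly $g^{*}Jg=J$, and since $g$ is bijective, $g^{-1}$ also preserves $\mathcal{A}$, whence $g^{*}=Jg^{-1}J$ and therefore also $gJg^{*}=J$. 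Expanding these two identities blockwise should yield
\[
P^{*}P=I+\langle\cdot,\eta\rangle\,\eta,\qquad PP^{*}=I+\langle\cdot,v\rangle\,v,\qquad P^{*}v=a\eta,\qquad P\eta=\bar a\,v,
\]
together with $|a|^{2}=1+\|\eta\|^{2}=1+\|v\|^{2}$.

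\medskip\noindent Next I would extract invertibility and a polar decomposition. From $P^{*}P\ge I$ the operator $P$ is bounded below, hence injective with closed range; from $PP^{*}\ge I$ the same holds for $P^{*}$, so $P$ has dense range; consequently $P$ is invertible and its polar decomposition is $P=UA$ with $U\in\mathcal{U}(H)$ and $A=(P^{*}P)^{1/2}$. Now $P^{*}P=I+\langle\cdot,\eta\rangle\,\eta$ acts as the identity on $\langle\eta\rangle^{\perp}$ and as multiplication by $1+\|\eta\|^{2}$ on $\langle\eta\rangle$, so by uniqueness of the positive square root $A=I$ on $\langle\eta\rangle^{\perp}$ and $A(\eta)=\sqrt{1+\|\eta\|^{2}}\,\eta$; in particular $A$ is precisely the operator of the statement, with $\xi$ replaced by $\eta$ and $a$ by the positive number $a_{0}:=\sqrt{1+\|\eta\|^{2}}$.

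\medskip\noindent Finally I would reconcile the phases. Since $|a|=a_{0}\ne 0$, write $a=a_{0}e^{i\theta}$. From $P^{*}v=a\eta$ and $P^{*}=AU^{*}$ we get $U^{*}v=aA^{-1}\eta=(a/a_{0})\eta=e^{i\theta}\eta$, hence $v=e^{i\theta}U\eta$. Setting $\xi:=e^{i\theta}\eta$ and $U_{0}:=e^{-i\theta}U\in\mathcal{U}(H)$, one has $\|\xi\|=\|\eta\|$ so that $a_{0}=\sqrt{1+\|\xi\|^{2}}$, and since $\langle\xi\rangle=\langle\eta\rangle$ with $A\xi=a_{0}\xi$, the operator $A$ is also the operator of the statement built from $\xi$. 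Comparing the four blocks then gives $P=UA=e^{i\theta}U_{0}A$, $v=e^{i\theta}U\eta=e^{i\theta}U_{0}\xi$, the $(2,1)$-block $\langle\cdot,\eta\rangle=e^{i\theta}\langle\cdot,\xi\rangle$, and $a=e^{i\theta}a_{0}$, that is,
\[
g=e^{i\theta}\begin{pmatrix} U_{0}A & U_{0}(\xi)\\[2pt] \langle\cdot,\xi\rangle & a_{0}\end{pmatrix},
\]
which is the asserted form. For the reverse inclusion I would check directly, using $A^{2}=I+\langle\cdot,\xi\rangle\,\xi$, $A\xi=a_{0}\xi$ and $a_{0}^{2}=1+\|\xi\|^{2}$, that every matrix of this shape preserves $\mathcal{A}$ and is invertible (its inverse again having this shape), so it lies in $G$. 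I expect the only genuinely delicate point to be the infinite-dimensional bookkeeping around the invertibility of $P$ and its polar decomposition — one really does need both $g^{*}Jg=J$ and $gJg^{*}=J$ to conclude that $P$ is onto and not merely injective — while the block identities and the phase matching are routine.
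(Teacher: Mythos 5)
Your proposal is correct: the blockwise identities you extract from $g^{*}Jg=J$ and $gJg^{*}=J$ are exactly right (they are the relations of \cite[Lemma VI.3.1]{FV} recalled in Section 2), the polar decomposition $P=UA$ with $A=(P^{*}P)^{1/2}$ acting as stated on $\langle\eta\rangle$ and $\langle\eta\rangle^{\perp}$ is justified by uniqueness of the positive square root, and the phase bookkeeping $\xi=e^{i\theta}\eta$, $U_{0}=e^{-i\theta}U$ reproduces the asserted form (including the degenerate case $\eta=0$). The paper itself gives no proof of Proposition \ref{p} — it is quoted from \cite{MR} — so there is nothing to diverge from; your self-contained derivation, including the correct observation that surjectivity of $P$ needs $gJg^{*}=J$ (i.e.\ bijectivity of $g$) and not just $g^{*}Jg=J$, is the natural argument and matches the route suggested by the structure results the paper cites.
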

  For  a closed subspace $M \subseteq H \oplus \mathbb{C}$, let  $G_{M}$ denote the group of all bijective bounded linear mappings on $M$ leaving the form $\mathcal{A}\restriction_{M}$ invariant.
  For a subspace $M \subseteq H \oplus \, \mathbb{C}$, orthogonal complement of $M$ for the Hermitian form $\mathcal{A}$ defined in (\ref{eq}) will be denoted by $M^{\dagger}$. A vector $\mathbb{x} \in H \oplus \, \mathbb{C}$ is called  time-like  if $\mathcal{Q}(\mathbb{x})<0$ and  light-like  if $\mathcal{Q}(\mathbb{x})=0$.  Isometries in $Aut(B)$ are projective transformations induced from elements in $G$. We would use the word isometry to denote the elements of $Aut(B)$ and $G$ simultaneously. 
  
  An isometry in $G$ is called elliptic, hyperbolic or parabolic if a corresponding element in $Aut(B)$ is of the  respective type. An eigenvalue of an isometry $T \in G$ is called time-like if its corresponding eigenspace has a  time-like vector. An elliptic isometry in $G$ is called boundary elliptic if the geometric multiplicity of its time-like eigenvalue is atleast two, otherwise it is called regular elliptic.  The words `normal isometry' and `non-unitary normal isometry' would mean isometries in $G$ which are normal and non-unitary normal as Hilbert space linear operators. The center of the group $G$ will be denoted by $Z_G$. The centralizer of an isometry $T \in G$ will be denoted by $Z(G)$.

  We will study parabolic isometries on a Siegel domain model which is a counterpart of the half plane model in the classical case. Let $e \in \partial{B}$. We write a general element $x \in H$ as $x=\left<x,e\right>e+x'$ where $x' \in \left<e\right>^{\perp}$.  We consider the following sesquilinear form.
  \begin{eqnarray}\label{se}
  \widehat{\mathcal{A}}\left((x,z),(y,w)\right)=-z\overline{\left<y,e\right>}-\left<x,e\right>\overline{w}+\left<x',y'\right>.
  \end{eqnarray}
  The projective space of its negative vectors gives rise to the  Siegel domain. It is defined as
  \[\Sigma=\left\{x \in H \,\,:\,\, \text{Re}\,\left<x,e\right> > \frac{1}{2}\|x'\|^2\right\}.\]
  Hermitian forms (\ref{eq}) and (\ref{se}) are equivalent via a unitary operator $D=\left[\begin{array}{ccc}
{}\dfrac{1}{\sqrt{2}} & {}0 & {}-\dfrac{1}{\sqrt{2}}\\
{}0 & {}I & {}0 \\
 {}\dfrac{1}{\sqrt{2}} & {}0 & {}\dfrac{1}{\sqrt{2}}
 \end{array}\right].$ The projective map induced by $D^{-1}$ is a Cayley map which is a biholomorphism between the unit ball $B$ and the Siegel domain $\Sigma$.
  Let the image of $e$ under Cayley map  be denoted by  $\infty$ which is the point at infinity in the completion of $\overline{\Sigma} \setminus \infty$.  Let $\widehat{G}$ denote the group of all bijective bounded linear mappings on $H \oplus \mathbb{C}$ leaving $\widehat{\mathcal{A}}$ invariant.  Similar to the $Aut(B)$ case, the isometries of $Aut(\Sigma)$ are the projective maps induced by elements of $\widehat{G}$.  \\
  For a subspace $M$ of $H \oplus \mathbb{C}$, we will use the same notation $M^{\dagger}$ to denote the orthogonal complement of $M$ with respect to $\widehat{\mathcal{A}}$.
  Let $\widehat{G}_\infty$ denote the stabilizer group of infinity in $\widehat{G}$.  A general element of $\widehat{G}_\infty$ is of the form $\left[\begin{array}{ccc}
  \lambda    & \left<\boldsymbol{\cdot},U^{-1}(a')\right> &   \mu s\\
    0  & U & \mu a'\\
    0 & 0 & \mu
 \end{array}\right]$ where  $\lambda \in \mathbb{C}$ is such that $\lambda\,\overline{\mu}=1$, $U \in \mathcal{U}(\left<e\right>^{\perp})$,  $s \in \mathbb{C}$ and $a' \in \left<e\right>^{\perp}$ satisfy $\text{Re}\,s=\dfrac{1}{2}\|a'\|^2$. The form of a  parabolic isometry in $\widehat{G}_\infty$ having singleton spectrum is given by $\lambda \left[\begin{array}{ccc}
  1    & \left<\boldsymbol{\cdot},a'\right> &   s\\
    0  & I & a'\\
    0 & 0 & 1
 \end{array}\right]$ which is denoted by $(\lambda,a',s)$ such that $s \neq 0$. Projective maps induced by such isometries are  Heisenberg translations with $a'$ as component in the horizontal direction. Thus a Heisenberg translation is called vertical if $a'=0$   otherwise it is called non-vertical.

\subsection{Main Results} 
  Following are the main results of this article.  Theorem \ref{e}  describes an elliptic isometry and its conjugacy class, Theorem \ref{t1}  investigates a hyperbolic isometry along with its conjugacy class and Theorem \ref{o} explores the conjugacy of parabolic isometries having singleton spectra. Centralizers  of  the isometries  are  given  in Theorems \ref{p1}, \ref{p3} and \ref{t3} respectively.
  \begin{theorem}[Elliptic isometry]\label{e}
 Let $T  \in G$ be an elliptic isometry. Then
 \begin{enumerate}
 \item $T$ is unitary upto conjugacy.
 \item $\sigma(T) \subseteq S^1$.
 \item $T$  has a  time-like eigenvector and vice versa.
 \item   If $(x,1)$ is a  time-like eigenvector of $T$ then $T=T_1 \oplus \, T_2$  where $T_1=T\restriction_{\left<(x,1)\right>}$  and  $T_2=T\restriction_{\left<{(x,1)}\right>  ^{\dagger}}$. Moreover $T_2 \in \mathcal{U}({\left<{(x,1)}\right>  ^{\dagger}})$.
 \end{enumerate}
 \end{theorem}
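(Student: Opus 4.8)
The plan is to establish the four assertions in the order (3), (4), (1), (2), since (3) just reformulates the definition, (4) supplies the structural decomposition, and (1)--(2) are then immediate consequences of (4) together with the homogeneity of $B$. For (3), write $J=I\oplus(-1)$ on $H\oplus\mathbb{C}$, so that $\mathcal{A}(\mathbb{u},\mathbb{v})=\langle\mathbb{u},J\mathbb{v}\rangle$ and membership $T\in G$ is equivalent to $T^{*}JT=J$; consequently, if $T\mathbb{v}=\lambda\mathbb{v}$ with $\mathcal{Q}(\mathbb{v})\neq 0$, then comparing $\mathcal{A}(T\mathbb{v},T\mathbb{v})$ with $\mathcal{A}(\mathbb{v},\mathbb{v})$ forces $|\lambda|=1$. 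Any vector $(x,z)$ with $\mathcal{Q}((x,z))<0$ has $z\neq 0$ (because $\mathcal{Q}((x,0))=\|x\|^{2}\geq 0$), hence may be rescaled to $(x,1)$ with $\|x\|<1$, and the associated projective point is $x\in B$. Therefore $T$ admits a time-like eigenvector $\iff$ $T$ fixes a time-like line $\iff$ the induced automorphism of $B$ fixes an interior point $\iff$ $T$ is elliptic, which is (3).

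For (4), let $\mathbb{v}=(x,1)$ be a time-like eigenvector with $T\mathbb{v}=\lambda\mathbb{v}$, so $|\lambda|=1$ and $\mathcal{A}(\mathbb{v},\mathbb{v})=\|x\|^{2}-1\neq 0$. The operator $P\mathbb{u}=\dfrac{\mathcal{A}(\mathbb{u},\mathbb{v})}{\mathcal{A}(\mathbb{v},\mathbb{v})}\,\mathbb{v}$ is a bounded idempotent with range $\langle\mathbb{v}\rangle$ and kernel $M:=\langle\mathbb{v}\rangle^{\dagger}$, so $H\oplus\mathbb{C}=\langle\mathbb{v}\rangle\oplus M$ is a direct sum of closed subspaces that are $\mathcal{A}$-orthogonal. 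Writing $M=\{(y,\langle y,x\rangle):y\in H\}$, the Cauchy--Schwarz inequality gives $\mathcal{Q}((y,\langle y,x\rangle))=\|y\|^{2}-|\langle y,x\rangle|^{2}\geq(1-\|x\|^{2})\|y\|^{2}$, so $\mathcal{A}|_{M}$ is positive definite and its norm is equivalent to the ambient one; hence $(M,\mathcal{A}|_{M})$ is a Hilbert space. Since $T$ preserves $\mathcal{A}$, is bijective, and fixes $\langle\mathbb{v}\rangle$, it also preserves $M=\langle\mathbb{v}\rangle^{\dagger}$, so $T=T_{1}\oplus T_{2}$ with $T_{1}=\lambda I$ on $\langle\mathbb{v}\rangle$ and $T_{2}=T|_{M}\in G_{M}$. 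As $\mathcal{A}|_{M}$ is positive definite, $G_{M}$ is exactly the unitary group of $M$, whence $T_{2}\in\mathcal{U}(\langle(x,1)\rangle^{\dagger})$.

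For (1) and (2), use that $B$ is homogeneous: by Proposition~\ref{p} with $\theta=0$ and $U=I$, the element with parameter $\xi$ sends the line $\langle(0,1)\rangle$ to $\langle(\xi,\sqrt{1+\|\xi\|^{2}})\rangle$, and $\xi\mapsto \xi/\sqrt{1+\|\xi\|^{2}}$ ranges over all of $B$; hence there is $g\in G$ carrying $\langle(x,1)\rangle$ to $\langle(0,1)\rangle$. Then $gTg^{-1}$ is elliptic with time-like eigenvector $(0,1)$, and for this eigenvector $\langle(0,1)\rangle^{\dagger}=H\oplus 0$, on which $\mathcal{A}$ is literally the inner product of $H$. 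Applying (4) to $gTg^{-1}$ yields $gTg^{-1}=U\oplus\lambda'$ with $U\in\mathcal{U}(H)$ and $|\lambda'|=1$ --- a block-diagonal operator with unitary blocks, hence unitary on $H\oplus\mathbb{C}$; this is (1). Finally (2) follows since $\sigma(T)=\sigma(gTg^{-1})$ (conjugation by the invertible operator $g$) and a unitary operator has spectrum contained in $S^{1}$.

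The only step demanding real care is the infinite-dimensional bookkeeping in (4): checking that $P$ is bounded, that $M$ is a closed complemented subspace, and that $\mathcal{A}|_{M}$ defines a norm equivalent to (hence as complete as) the ambient one, so that $(M,\mathcal{A}|_{M})$ is genuinely a Hilbert space. All of this is automatic when $\dim H<\infty$ but must be argued here; granting it, the identification of bijective $\mathcal{A}|_{M}$-isometries with unitaries, together with (1)--(3), is routine.
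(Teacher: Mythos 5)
Your argument is correct, and while its overall skeleton (reduce to a time-like eigenvector, split off its $\mathcal{A}$-orthogonal complement, observe the complement is a Hilbert space on which $T$ acts unitarily) matches the paper's, the two nontrivial steps are done by genuinely different means. For the completeness of $\left<(x,1)\right>^{\dagger}$ the paper invokes Witt's theorem (Theorem \ref{wt}) to transport completeness from $\mathbb{C}^{\dagger}=H$, whereas you construct the bounded $\mathcal{A}$-orthogonal projection onto $\left<(x,1)\right>$, identify the complement explicitly as the graph $\{(y,\left<y,x\right>):y\in H\}$, and get positive definiteness together with norm equivalence from Cauchy--Schwarz and $\|x\|<1$; this is more elementary and self-contained, and it simultaneously re-proves the relevant cases of Proposition \ref{direct sum}, Lemma \ref{b} and Lemma \ref{parker} that the paper cites. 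For part (1) the paper works at the level of $Aut(B)$: transitivity gives $m$ with $m(0)=b$, and then Lemmas \ref{lemma 1} and \ref{lemma 2} (Franzoni--Vesentini) say the conjugate fixing $0$ is the restriction of a unitary. You instead stay inside $G$, using Proposition \ref{p} to produce $g\in G$ moving the time-like line to $\left<(0,1)\right>$ and then deducing unitarity of $gTg^{-1}$ from your decomposition (4) plus the modulus-one observation $|\lambda|=1$ for eigenvalues of non-null eigenvectors; this makes (1)--(2) corollaries of (3)--(4) rather than of the holomorphic fixed-point results, at the cost of redoing a small amount of transitivity bookkeeping that the paper gets for free from $Aut(B)$. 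Both routes are sound; yours is more linear-algebraic and avoids Witt's theorem, the paper's is shorter where it can lean on the cited machinery.
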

 \begin{theorem}[Hyperbolic isometry]\label{t1}
 Let $T \in G$ be a hyperbolic isometry. Then
\begin{enumerate}
\item $T$ is non-unitary normal upto conjugacy.
\item Spectrum of T has two eigenvalues of the form $re^{i \theta}$ and $r^{-1}e^{i \theta}$ where $r>0$, $r \neq 1$. Eigenspaces with respect to these eigenvalues are one dimensional, each generated by a light-like eigenvector. Rest of the spectral values lie on $S^1$.
\item $T=T_1 \oplus  T_2$  where $T_1=T\restriction_{M}$, $M=\text{span}\,\left\{(y_1,1), (y_2,1)\right\}$,  $(y_1,1)$ and $(y_2,1)$ are light-like eigenvectors of $T$ and $T_2=T\restriction_{{M}^{\dagger}} $. Moreover $T_2 \in \mathcal{U}(M^{\dagger})$.
  \end{enumerate}
 \end{theorem}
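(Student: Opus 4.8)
Here is how I would go about proving Theorem~\ref{t1}.

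\emph{Overview and the basic splitting.} The plan is to realise the two boundary fixed points of $T$ as two distinct light-like lines in $H\oplus\mathbb C$, use them to decompose $H\oplus\mathbb C$ into a $T$-invariant non-degenerate plane of signature $(1,1)$ and its $\mathcal A$-orthogonal complement, and then treat the two summands separately, all the hyperbolic behaviour being concentrated in the plane. Concretely: since $T$ is hyperbolic, the projective map it induces on $\overline B$ fixes exactly two points of $\partial B$; these are represented by two distinct light-like vectors, whose last coordinates are non-zero and can be scaled to $1$, say $(y_1,1)$ and $(y_2,1)$. A projective map fixing a line scales a spanning vector, so $T(y_j,1)=\lambda_j(y_j,1)$. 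Put $M=\mathrm{span}\{(y_1,1),(y_2,1)\}$; the Gram matrix of $\mathcal A\restriction_M$ in this basis is $\left(\begin{smallmatrix}0&c\\ \bar c&0\end{smallmatrix}\right)$ with $c=\langle y_1,y_2\rangle-1$, and $c\neq 0$ because $\|y_1\|=\|y_2\|=1$ and $y_1\neq y_2$ forbid equality in the Cauchy--Schwarz inequality, so $\mathcal A\restriction_M$ is non-degenerate of signature $(1,1)$. Hence $H\oplus\mathbb C=M\oplus M^{\dagger}$, the summand $M^{\dagger}$ is $T$-invariant since $T\in G$, and $T=T_1\oplus T_2$ with $T_1=T\restriction_M$, $T_2=T\restriction_{M^{\dagger}}$. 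Because $\mathcal A$ has a one-dimensional negative part and $M$ already carries it, $\mathcal A\restriction_{M^{\dagger}}$ is positive definite (uniformly so, as $J=I_H\oplus(-1)$ differs from the identity by a rank-one operator), so $T_2$ preserves an inner product and $T_2\in\mathcal U(M^{\dagger})$; this is $(3)$. Applying invariance of $\mathcal A$ to the two eigenvectors gives $\lambda_1\overline{\lambda_2}\,c=c$, so $\lambda_1\overline{\lambda_2}=1$; writing $\lambda_1=re^{i\theta}$ with $r>0$ forces $\lambda_2=r^{-1}e^{i\theta}$, and $\mathcal Q(y_j,1)=r^{\pm2}\mathcal Q(y_j,1)$ confirms both distinguished eigenvectors are light-like.

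\emph{Finishing (2).} Since $(y_1,1),(y_2,1)$ are independent eigenvectors, $T_1$ is diagonalisable. If $r=1$ then $\lambda_1=\lambda_2$, so $T_1=e^{i\theta}I_M$ is unitary, hence $T$ is unitary, hence commutes with $J$ and fixes the time-like vector $(0,1)$; then $T$ would be elliptic by Theorem~\ref{e}$(3)$, contradicting hyperbolicity. So $r\neq 1$. If $v$ is any $\lambda_1$-eigenvector of $T$, decomposing $v=v_M+v_{M^{\dagger}}$ gives $T_2v_{M^{\dagger}}=\lambda_1v_{M^{\dagger}}$, and since $T_2$ is unitary with $|\lambda_1|=r\neq 1$ we get $v_{M^{\dagger}}=0$, so $v$ lies in the $\lambda_1$-eigenspace of $T_1$, which is $\langle(y_1,1)\rangle$; the same holds at $\lambda_2$. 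Finally $\sigma(T)=\{\lambda_1,\lambda_2\}\cup\sigma(T_2)\subseteq\{re^{i\theta},r^{-1}e^{i\theta}\}\cup S^1$, completing $(2)$.

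\emph{Proving (1).} First I would move $M$ into standard position. Fix a unit vector $u\in H$ and set $M_0=\langle u\rangle\oplus\mathbb C$; then $\mathcal A\restriction_{M_0}$ has signature $(1,1)$ and $M_0^{\dagger}=\langle u\rangle^{\perp}$ is at once the $\mathcal A$- and the Hilbert-orthogonal complement of $M_0$. Choosing an $\mathcal A$-orthonormal basis of $M$, matching it with $\{(u,0),(0,1)\}$, and choosing a Hilbert-space isometry $M^{\dagger}\to M_0^{\dagger}$ (both are Hilbert spaces of the same dimension), I build $g\in G$ with $g(M)=M_0$; replacing $T$ by $gTg^{-1}$ I may assume $M=M_0$, so the splitting $T=T_1\oplus T_2$ is now Hilbert-orthogonal. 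Here $T_1\in G_{M_0}$ is diagonalisable with eigenvalues $re^{i\theta},r^{-1}e^{i\theta}$, $r\neq 1$, and its eigenvectors are light-like since $S_1:=e^{-i\theta}T_1\in G_{M_0}$ satisfies $\mathcal A(v,v)=r^{2}\mathcal A(v,v)$ on them. Normalising the two light-like eigenvectors of $S_1$ so that their $\mathcal A$-pairing is $1$ and mapping them to the correspondingly normalised eigenvectors of the self-adjoint matrix $\left(\begin{smallmatrix}a&b\\ b&a\end{smallmatrix}\right)$, $a=\tfrac12(r+r^{-1})$, $b=\tfrac12(r-r^{-1})$ --- which lies in $G_{M_0}$ and has eigenvalues $r,r^{-1}$ --- defines an element of $G_{M_0}$ conjugating $S_1$ to that matrix; conjugating $T$ by it, extended by the identity on $M^{\dagger}$, I may assume $T_1=e^{i\theta}\left(\begin{smallmatrix}a&b\\ b&a\end{smallmatrix}\right)$. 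This $T_1$ is a scalar times a self-adjoint operator, hence normal, and since the decomposition is now Hilbert-orthogonal with $T_2$ unitary (so normal), $T=T_1\oplus T_2$ is normal; it is non-unitary because $re^{i\theta}\in\sigma(T)$ lies off $S^1$. This is $(1)$.

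\emph{The main obstacle.} The delicate point is $(1)$: the decomposition $M\oplus M^{\dagger}$ is $\mathcal A$-orthogonal but in general not Hilbert-orthogonal, so normality of the two summands does not by itself yield normality of $T$. The remedy --- conjugating within $G$ to bring $M$ onto the coordinate plane $\langle u\rangle\oplus\mathbb C$, which is possible in infinite dimensions precisely because $M^{\dagger}$ and $\langle u\rangle^{\perp}$ are isometric Hilbert spaces --- together with the $\mathrm{U}(1,1)$ conjugacy normal form for $T_1$ (any element of $G_{M_0}$ with a given real eigenvalue pair $\{r,r^{-1}\}$, $r\neq 1$, and light-like eigenvectors is determined up to $G_{M_0}$-conjugacy), is the crux; everything else is bookkeeping with the form $\mathcal A$ and the unitarity of $T_2$.
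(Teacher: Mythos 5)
Your proposal is correct in substance but follows a genuinely different route from the paper. For part (3) you agree with the paper up to the splitting $H\oplus\mathbb C=M\oplus M^{\dagger}$ (Proposition \ref{direct sum}, Lemma \ref{b}, Lemma \ref{parker} in the paper), but you obtain completeness of $M^{\dagger}$ by arguing that the form is \emph{uniformly} positive there (the compression of $A'$ to the closed subspace $M^{\dagger}$ is a rank-one self-adjoint perturbation of the identity, so pointwise positivity forces a positive lower bound on its spectrum); the paper instead transports completeness from $(\left<x\right>\oplus\mathbb C)^{\perp}$ via Witt's theorem (Theorem \ref{wt}). Both work; yours is more self-contained, though the parenthetical "uniformly so" deserves the one-line spectral argument. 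For parts (1) and (2) the paper conjugates the two boundary fixed points to an antipodal pair $\pm x$ using bi-transitivity of $Aut(B)$ on $\partial B$ (Lemma \ref{bitransitive}), invokes the characterization of non-unitary normal isometries by eigenvectors of the form $(\pm x_0,1)$ (Lemma \ref{g}), and then reads the spectral statement off Theorem \ref{f}(3). You instead prove (2) directly from invariance of the pairing ($\lambda_1\overline{\lambda_2}=1$, one-dimensionality of the two eigenspaces from unitarity of $T_2$), and prove (1) by explicitly building a Witt-type element of $G$ carrying $M$ onto $\left<u\right>\oplus\mathbb C$ (the dimension count for $M^{\dagger}$ versus $\left<u\right>^{\perp}$ is fine, both being closed of codimension two) and then normalizing $T_1$ inside $G_{M_0}$, essentially $\mathrm U(1,1)$, to $e^{i\theta}$ times a self-adjoint matrix. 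Your route buys an explicit normal form and correctly isolates the key subtlety — the $\mathcal A$-orthogonal decomposition is not Hilbert-orthogonal, so normality may only be asserted after moving to standard position — while the paper's route is shorter because Lemma \ref{g} and Theorem \ref{f} already package that linear algebra.

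One step is wrong as written: in ruling out $r=1$ you argue that $T_1=e^{i\theta}I_M$ is unitary, "hence $T$ is unitary, hence \ldots fixes the time-like vector $(0,1)$". At that stage the splitting is only $\mathcal A$-orthogonal, so unitarity of $T$ does not follow; moreover a unitary element of $G$ merely has $(0,1)$ as an eigenvector, and $T$ itself need have nothing to do with $(0,1)$. The repair is immediate from what you already have: if $r=1$ then every vector of $M$ is an eigenvector of $T$ with eigenvalue $e^{i\theta}$, and $M$ contains a time-like vector (for instance $(y_1,1)+(y_2,1)$, since $\mathrm{Re}\left<y_1,y_2\right><1$), so $T$ has a time-like eigenvector and is elliptic by Theorem \ref{e}(3), contradicting hyperbolicity.
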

\begin{theorem}[Parabolic isometry]\label{o}
 Let $\widehat{S} \in \widehat{G}$ be a parabolic isometry  such that $\sigma(\widehat{S})=\{\lambda\}$. Then
 \begin{enumerate}
 \item Upto conjugacy, $\widehat{S}$ is  a Heisenberg translation.
      \item $\widehat{S}=\widehat{S}_1 \oplus \widehat{S}_2$ where $\widehat{S}_1=\widehat{S}\restriction_K$ and $\widehat{S}_2=\widehat{S}\restriction_{K^{\dagger}}$. Here $K$ is a two or three dimensional non-degenerate subspace of $H \oplus \mathbb{C}$ containing a light-like eigenvector and $\widehat{S}_2=\lambda\,I$. The $ker\,(\widehat{S_1}-\lambda\,I)$ is generated by the light-like eigenvector and minimal polynomial of $\widehat{S}_1$ is $(x-\lambda)^2$ or $(x-\lambda)^3$.
      \item All the parabolic isometries in $\widehat{G}$ having same singleton spectra and degree of restricted minimal polynomial as $2$ (resp.  $3$)  get dispersed into two conjugacy classes (resp. fall in a single conjugacy class).
      \end{enumerate}
  \end{theorem}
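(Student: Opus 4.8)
The plan is to establish the three assertions in turn, each feeding into the next, beginning by normalising the boundary fixed point of $\widehat{S}$.

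\emph{Part (1).} Since $\widehat{S}$ is parabolic it has a unique fixed point in $\partial\Sigma$, and $\widehat{G}$ acts transitively on $\partial\Sigma$ (equivalently $Aut(B)$ acts transitively on $\partial B$), so after conjugating by an element of $\widehat{G}$ we may assume this fixed point is $\infty$, i.e. $\widehat{S}\in\widehat{G}_\infty$. Write $\widehat{S}$ in the block upper-triangular form of a general element of $\widehat{G}_\infty$ recalled above, with diagonal entries $\lambda$, $U$ and $\mu$, where $\lambda\overline{\mu}=1$ and $U\in\mathcal{U}(\langle e\rangle^{\perp})$. The spectrum of a block upper-triangular bounded operator is contained in the union of the spectra of its diagonal blocks and always contains the $1\times1$ corner values $\lambda$ and $\mu$; hence $\sigma(\widehat{S})=\{\lambda\}$ forces $\mu=\lambda$ and $\sigma(U)=\{\lambda\}$, and since $U$ is unitary, hence normal, a singleton spectrum gives $U=\lambda I$. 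Thus $|\lambda|^{2}=\lambda\overline{\mu}=1$ and $\widehat{S}$ is precisely of the form $(\lambda,a',s)$ with $\text{Re}\,s=\tfrac12\|a'\|^{2}$; if $s=0$ then $a'=0$ and $\widehat{S}=\lambda I$ is projectively the identity, contradicting parabolicity, so $s\neq0$ and $\widehat{S}$ induces a Heisenberg translation.

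\emph{Part (2).} Work with $\widehat{S}=(\lambda,a',s)$ in the coordinates $H\oplus\mathbb{C}\cong\mathbb{C}\oplus\langle e\rangle^{\perp}\oplus\mathbb{C}$ coming from $x=\langle x,e\rangle e+x'$, and note that the light-like vector $(e,0)$ representing $\infty$ is a $\lambda$-eigenvector. If $a'=0$ I will take $K=\text{span}\{(e,0),(0,1)\}$; a direct check shows $K$ is $\widehat{S}$-invariant and $\widehat{\mathcal{A}}$-nondegenerate, that $K^{\dagger}=\langle e\rangle^{\perp}$ with $\widehat{S}\restriction_{K^{\dagger}}=\lambda I$, and that $\widehat{S}\restriction_{K}$ is $\lambda$ times the $2\times2$ unipotent Jordan block, nontrivial because $s\neq0$, so its minimal polynomial is $(x-\lambda)^{2}$ and $\ker(\widehat{S}\restriction_{K}-\lambda I)=\langle(e,0)\rangle$. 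If $a'\neq0$, set $f=a'/\|a'\|$ and take $K=\text{span}\{(e,0),(f,0),(0,1)\}$; the same computation gives $\widehat{S}$-invariance, nondegeneracy, $K^{\dagger}=\langle e,f\rangle^{\perp}$ with $\widehat{S}\restriction_{K^{\dagger}}=\lambda I$, and $\widehat{S}\restriction_{K}-\lambda I$ nilpotent of index exactly $3$ because $a'\neq0$, so its minimal polynomial is $(x-\lambda)^{3}$ and again $\ker(\widehat{S}\restriction_{K}-\lambda I)$ is the light-like line $\langle(e,0)\rangle$. This proves (2), and it identifies the degree-$2$ case with $a'=0$ (vertical Heisenberg translations) and the degree-$3$ case with $a'\neq0$ (non-vertical ones).

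\emph{Part (3).} By (1)--(2) we may assume $\widehat{S}=(\lambda,a',s)$. Since $\widehat{S}$ has $\infty$ as its unique boundary fixed point, any $h\in\widehat{G}$ with $h\widehat{S}h^{-1}$ still fixing $\infty$ must send $\infty$ to itself, so $h\in\widehat{G}_\infty$; hence two such isometries are conjugate in $\widehat{G}$ iff they are conjugate in $\widehat{G}_\infty$, and any conjugate is again of the form $(\lambda,b',t)$ by the analysis in (1). It therefore suffices to read off the conjugation action of $\widehat{G}_\infty$ on the pair $(a',s)$. Writing a general element of $\widehat{G}_\infty$, modulo a central scalar, as a product of a Heisenberg translation $(c',r)$, a rotation $\text{diag}(1,V,1)$ with $V\in\mathcal{U}(\langle e\rangle^{\perp})$, and a dilation $\text{diag}(\rho,I,\rho^{-1})$ with $\rho>0$, straightforward matrix multiplications (equivalently, the Heisenberg group law together with its dilations) show that conjugation by these sends $(\lambda,a',s)$ respectively to $(\lambda,a',s+2i\,\text{Im}\langle a',c'\rangle)$, to $(\lambda,Va',s)$, and to $(\lambda,\rho a',\rho^{2}s)$. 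Consequently, if $a'\neq0$ one rotates $a'$ to $\|a'\|f$, then conjugates by a translation with $\langle a',c'\rangle$ purely imaginary to make $\text{Im}\,s=0$, then dilates to make $\|a'\|=1$, landing on the single representative $(\lambda,f,\tfrac12)$: a single conjugacy class. If $a'=0$ then $\text{Re}\,s=0$, so $s=it$ with $t\neq0$; rotations and Heisenberg translations fix $t$, dilations multiply it by $\rho^{2}>0$, so $\text{sign}(t)$ is preserved and is a complete invariant, giving exactly the two classes of $(\lambda,0,i)$ and $(\lambda,0,-i)$.

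The step I expect to be the crux is the non-conjugacy in the degree-$2$ case, i.e. showing $(\lambda,0,i)$ and $(\lambda,0,-i)$ genuinely lie in different $\widehat{G}$-conjugacy classes. This rests on two points: (i) the reduction that conjugacy between parabolics with fixed point $\infty$ may be realised inside $\widehat{G}_\infty$, and (ii) the observation that $\widehat{G}_\infty$ moves the vertical parameter $t$ only by multiplication by positive reals. The conjugation computations themselves are routine but need care in tracking the compatibility constraints $\text{Re}\,r=\tfrac12\|c'\|^{2}$ and $\lambda'\overline{\mu'}=1$, and the conjugate-linearity hidden in the off-diagonal operator entries; getting the factor $2i\,\text{Im}\langle a',c'\rangle$ exactly right is what makes the degree-$3$ normalisation go through in one stroke.
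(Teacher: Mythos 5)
Your overall route is essentially the paper's: conjugate the fixed point to $\infty$, use the explicit form of elements of $\widehat{G}_\infty$ to recognise $\widehat{S}$ as a Heisenberg translation $(\lambda,a',s)$, take the same subspace $K=\text{span}\{(e,0),(a',0),(0,1)\}$ (resp. $\text{span}\{(e,0),(0,1)\}$ when $a'=0$) for the decomposition, and decide conjugacy inside $\widehat{G}_\infty$ after observing that any conjugator must fix $\infty$. Your Parts (2) and (3) are correct: the computation of $K^{\dagger}$, the kernel, and the nilpotency indices match the paper's Proposition \ref{p12}, and your formulas for the conjugation action of translations, rotations and (positive) dilations on the pair $(a',s)$ — in particular the increment $2i\,\text{Im}\langle a',c'\rangle$ and the factor $\rho^{2}$ — check out, giving exactly the paper's conclusion (Proposition \ref{p11}) that the sign of $\text{Im}\,s$ is a complete invariant in the vertical case and that non-vertical translations form one class; organising this via the Iwasawa decomposition and a canonical representative $(\lambda,f,\tfrac12)$ is only a cosmetic difference from the paper's direct solution of the intertwining equations.

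The genuine gap is in Part (1), at the deduction $\sigma(U)=\{\lambda\}$. The fact you cite — that the spectrum of a block upper-triangular operator is contained in the union of the spectra of its diagonal blocks — gives $\sigma(\widehat{S})\subseteq\{\lambda\}\cup\sigma(U)\cup\{\mu\}$, which is the wrong direction: from $\sigma(\widehat{S})=\{\lambda\}$ it says nothing about $\sigma(U)$. What you need is the reverse inclusion $\sigma(U)\setminus\{\lambda,\mu\}\subseteq\sigma(\widehat{S})$, and for general bounded block upper-triangular operators this is false (e.g. the unitary completion $\left[\begin{smallmatrix} S & I-SS^{*}\\ 0 & S^{*}\end{smallmatrix}\right]$ of the unilateral shift $S$ has diagonal blocks with spectrum the closed unit disc, while the whole operator has spectrum on the circle). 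The inclusion does hold in the present situation, but it must be argued: either as in Lemma \ref{l3} (for $r\notin\{\lambda,\mu\}$, $\widehat{S}-rI$ is bijective iff $U-rI$ is, proved by solving the triangular system explicitly), or by using that $U$ is unitary, so each $r\in\sigma(U)$ is an approximate eigenvalue, and an approximate eigenvector $x_n'$ of $U$ becomes an approximate eigenvector of $\widehat{S}$ after adding the $e$-component $\langle x_n',U^{-1}(a')\rangle/(r-\lambda)$. (Similarly, your claim that $\mu\in\sigma(\widehat{S})$ deserves its one-line reason: the range of $\widehat{S}-\mu I$ has vanishing last coordinate, so it is not surjective.) With this inclusion supplied, your conclusions $\mu=\lambda$, $U=\lambda I$, $s\neq0$ — and hence the remainder of the proof — go through as in the paper.
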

 \begin{theorem}[Centralizer of an elliptic isometry]\label{p1}
 Let $T$ be an elliptic isometry having $M$ as its time-like eigenspace. Then
  \[Z(T)=G_{M} \times Z(T\restriction_{M^{\dagger}}) \]
 where $Z(T\restriction_{M^{\dagger}}) \subseteq \mathcal{U}(M^{\dagger})$.
\end{theorem}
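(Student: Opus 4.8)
The plan is to identify $Z(T)$ as the centralizer of a ``block--scalar'' operator relative to the $\mathcal{A}$--orthogonal splitting $H\oplus\mathbb{C}=M\oplus M^{\dagger}$. Let $\lambda$ be the time-like eigenvalue of $T$, so that $|\lambda|=1$ by Theorem~\ref{e}(2) and $M=\ker(T-\lambda I)$; by hypothesis $M$ contains a time-like vector $v_{0}$. The first step — and the only non-formal one — is to record the structural input: \emph{$M$ is non-degenerate for $\mathcal{A}$, $H\oplus\mathbb{C}=M\oplus M^{\dagger}$, $T=\lambda I_{M}\oplus T\restriction_{M^{\dagger}}$, and $\mathcal{A}\restriction_{M^{\dagger}}$ is positive definite, whence $G_{M^{\dagger}}=\mathcal{U}(M^{\dagger})$}. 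Since $v_{0}$ is non-null, $\langle v_{0}\rangle$ is non-degenerate and $H\oplus\mathbb{C}=\langle v_{0}\rangle\oplus\langle v_{0}\rangle^{\dagger}$; because every $\mathcal{A}$--negative-definite subspace of $H\oplus\mathbb{C}$ is at most one dimensional, $\mathcal{A}\restriction_{\langle v_{0}\rangle^{\dagger}}$ has neither negative nor null vectors, and in fact is uniformly positive (hence equivalent to a Hilbert inner product on the closed subspace $\langle v_{0}\rangle^{\dagger}$). Writing $m\in M$ as $m=cv_{0}+m'$ with $m'\in\langle v_{0}\rangle^{\dagger}$ and using $v_{0}\in M$ to see $m'\in M$, one gets $M=\langle v_{0}\rangle\oplus\bigl(M\cap\langle v_{0}\rangle^{\dagger}\bigr)$, an $\mathcal{A}$--orthogonal sum of non-degenerate subspaces, so $M$ is non-degenerate and $M^{\dagger}\subseteq\langle v_{0}\rangle^{\dagger}$ inherits a uniformly positive definite form; its isometry group $G_{M^{\dagger}}$ is therefore $\mathcal{U}(M^{\dagger})$. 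For a one-dimensional $M$ this is exactly Theorem~\ref{e}(4); in the boundary-elliptic case it is its higher-multiplicity analogue, and may alternatively be extracted from the proof of Theorem~\ref{e}. Finally I would note the embedding $G_{M}\times G_{M^{\dagger}}\hookrightarrow G$, $(A,B)\mapsto A\oplus B$ (legitimate precisely because the splitting is $\mathcal{A}$--orthogonal), and throughout view $G_{M}\times Z(T\restriction_{M^{\dagger}})$ as a subgroup of $G$ via this map.

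With this in hand the inclusion $G_{M}\times Z(T\restriction_{M^{\dagger}})\subseteq Z(T)$ is immediate: for $A\in G_{M}$ and $B\in Z(T\restriction_{M^{\dagger}})$ the operator $A\oplus B$ lies in $G$, and it commutes with $T=\lambda I_{M}\oplus T\restriction_{M^{\dagger}}$ block by block, since $A$ commutes with the scalar $\lambda I_{M}$ and $B$ commutes with $T\restriction_{M^{\dagger}}$ by choice.

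The reverse inclusion is the substance. Given $P\in Z(T)$, I would first show that $P$ preserves the eigenspace: for $v\in M$ one has $(T-\lambda I)(Pv)=P(T-\lambda I)v=0$, so $P(M)\subseteq M$; since $P$ is bijective and $P^{-1}$ also commutes with $T$, the same argument gives $P^{-1}(M)\subseteq M$, whence $P(M)=M$ — and here it is important to conclude via bijectivity rather than a dimension count, as $M$ and $M^{\dagger}$ may be infinite dimensional. Because $P$ is an $\mathcal{A}$--isometry fixing $M$ setwise, it fixes $M^{\dagger}$ setwise too: for $w\in M^{\dagger}$ and arbitrary $u\in M$, writing $u=Pv$ with $v\in M$ gives $\mathcal{A}(u,Pw)=\mathcal{A}(Pv,Pw)=\mathcal{A}(v,w)=0$, so $Pw\in M^{\dagger}$, and bijectivity again yields $P(M^{\dagger})=M^{\dagger}$. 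Hence $P=P\restriction_{M}\oplus P\restriction_{M^{\dagger}}$ with $P\restriction_{M}\in G_{M}$ (an arbitrary element there) and $P\restriction_{M^{\dagger}}\in G_{M^{\dagger}}$, and restricting the identity $PT=TP$ to the common invariant subspace $M^{\dagger}$ shows $P\restriction_{M^{\dagger}}$ commutes with $T\restriction_{M^{\dagger}}$, i.e.\ $P\in G_{M}\times Z(T\restriction_{M^{\dagger}})$. Together with the previous paragraph this gives the asserted equality, and $Z(T\restriction_{M^{\dagger}})\subseteq G_{M^{\dagger}}=\mathcal{U}(M^{\dagger})$ is then merely a restatement of the first step.

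The main obstacle is the structural step: proving that the time-like eigenspace is non-degenerate and that the induced isometry on its $\mathcal{A}$--orthogonal complement acts unitarily on a genuine Hilbert space. This is where the signature of $\mathcal{A}$ (at most one negative direction) enters essentially, and some care with uniform positivity is needed so that $G_{M^{\dagger}}$ is genuinely $\mathcal{U}(M^{\dagger})$ and not merely the isometry group of an abstract positive form. Everything after it is the routine ``centralizer of a scalar on a summand'' bookkeeping, carried out with bijectivity in place of finite-dimensional rank arguments.
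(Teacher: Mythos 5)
Your centralizer bookkeeping is exactly the paper's: invariance of $M$ under any $S\in Z(T)$ because $M$ is an eigenspace (Lemma \ref{l8}), invariance of $M^{\dagger}$ because $S$ preserves $\mathcal{A}$ (Lemma \ref{b}), and $Z(T\restriction_{M})=G_{M}$ since $T\restriction_{M}$ is a scalar. Where you genuinely diverge is the structural step. The paper conjugates: writing $T=R\widetilde{U}R^{-1}$ with $\widetilde{U}$ unitary (Theorem \ref{e}(1)), it notes that the time-like eigenspace $R^{-1}(M)$ of $\widetilde{U}$ contains $\mathbb{C}$, so its $\mathcal{A}$-complement is its Hilbert orthogonal complement; this yields $H\oplus\mathbb{C}=R^{-1}(M)\oplus R^{-1}(M)^{\dagger}$ and completeness of the complement for free, and Lemma \ref{LEMMA} plus the isometry $R^{-1}\colon M^{\dagger}\to R^{-1}(M)^{\perp}$ transport both facts back to $M$. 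You instead argue intrinsically from the signature, splitting off one time-like line $\langle v_{0}\rangle\subseteq M$ (Proposition \ref{direct sum} applies, as it is one dimensional) and exploiting positivity on $\langle v_{0}\rangle^{\dagger}$. That route is viable and avoids the conjugation, but as written two points are asserted rather than proved, and you should close them inside your own framework. First, uniform positivity on $\langle v_{0}\rangle^{\dagger}$ needs the (easy) estimate: scaling $v_{0}=(x_{0},1)$ with $\|x_{0}\|<1$, any $(y,w)\in\langle v_{0}\rangle^{\dagger}$ has $w=\left<y,x_{0}\right>$, so $\mathcal{Q}(y,w)=\|y\|^{2}-|\left<y,x_{0}\right>|^{2}\geq\frac{1-\|x_{0}\|^{2}}{1+\|x_{0}\|^{2}}\bigl(\|y\|^{2}+|w|^{2}\bigr)$; this is what makes $\bigl(\langle v_{0}\rangle^{\dagger},\mathcal{A}\bigr)$ a genuine Hilbert space and hence $G_{M^{\dagger}}=\mathcal{U}(M^{\dagger})$. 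Second, when $T$ is boundary elliptic $M$ may be infinite dimensional, so $H\oplus\mathbb{C}=M\oplus M^{\dagger}$ cannot be quoted from Proposition \ref{direct sum}, and ``higher-multiplicity analogue of Theorem \ref{e}(4)'' is not an argument; but it does follow in your setup: $M\cap\langle v_{0}\rangle^{\dagger}$ is closed in the Hilbert space $\langle v_{0}\rangle^{\dagger}$, the projection theorem gives $\langle v_{0}\rangle^{\dagger}=\bigl(M\cap\langle v_{0}\rangle^{\dagger}\bigr)\oplus M^{\dagger}$, and adjoining $\langle v_{0}\rangle$ yields the splitting with $M^{\dagger}$ closed, hence complete. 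With those two sentences supplied your proof is complete and rather more self-contained than the paper's; the paper's conjugation trick buys the splitting and completeness directly from the unitary normal form at no computational cost.
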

\begin{theorem}[Centralizer of a hyperbolic isometry]\label{p3}
 Let $T$ be a hyperbolic isometry and $M$ be the two-dimensional subspace of $H \oplus \mathbb{C}$  generated by two light-like eigenvectors of $T$. Then
 \[Z(T)=Z(T\restriction_{M}) \times Z(T\restriction_{M^{\dagger}})\]
    where $Z(T\restriction_{M})$ gets identified with $S^1 \times \mathbb{R}$ and $Z(T\restriction_{M^{\dagger}}) \subseteq \mathcal{U}(M^{\dagger})$.
\end{theorem}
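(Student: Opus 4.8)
\emph{Proof strategy.} The argument rests on the orthogonal block decomposition $T=T_1\oplus T_2$ provided by Theorem~\ref{t1}(3), where $M=\mathrm{span}\{(y_1,1),(y_2,1)\}$ is spanned by light-like eigenvectors for the eigenvalues $re^{i\theta}$ and $r^{-1}e^{i\theta}$, $T_1=T\restriction_{M}$, and $T_2=T\restriction_{M^{\dagger}}\in\mathcal{U}(M^{\dagger})$. By Theorem~\ref{t1}(2) these two eigenvalues are distinct (as $r\neq 1$), neither lies on $S^1$, and $\sigma(T)\setminus\{re^{i\theta},r^{-1}e^{i\theta}\}\subseteq S^1$; hence $\{re^{i\theta}\}$ and $\{r^{-1}e^{i\theta}\}$ are isolated points of $\sigma(T)$. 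The plan is to show that this decomposition is forced on every $g\in Z(T)$, and then to identify the two resulting factors.

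\emph{Step 1: the decomposition is forced.} Let $P_{+},P_{-}$ be the Riesz spectral projections of $T$ attached to the isolated singletons $\{re^{i\theta}\}$, $\{r^{-1}e^{i\theta}\}$, and set $P_{0}=I-P_{+}-P_{-}$. Computing these projections block-by-block along $T=T_1\oplus T_2$ — using $\sigma(T_2)\subseteq S^1$ and that $T_1$ is a $2\times 2$ matrix with two distinct eigenvalues, hence diagonalizable — one gets $\mathrm{ran}\,P_{+}=\langle(y_1,1)\rangle$, $\mathrm{ran}\,P_{-}=\langle(y_2,1)\rangle$, and $\mathrm{ran}\,P_{0}=M^{\dagger}$, so $P_0$ is the projection onto $M^{\dagger}$ along $M$ (here one uses that $M$ is non-degenerate, whence $H\oplus\mathbb{C}=M\oplus M^{\dagger}$ as a topological direct sum). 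Any $g\in Z(T)$ commutes with the resolvent of $T$, hence with $P_{+},P_{-},P_{0}$, and therefore leaves $\langle(y_1,1)\rangle$, $\langle(y_2,1)\rangle$ and $M^{\dagger}$ invariant. Combined with the fact that $g$ is an $\mathcal{A}$-isometry and that $M$ is $\mathcal{A}$-orthogonal to $M^{\dagger}$ with $\mathcal{A}$ non-degenerate on each piece, this forces $g=(g\restriction_{M})\oplus(g\restriction_{M^{\dagger}})$ with $g\restriction_{M}\in G_{M}$ and $g\restriction_{M^{\dagger}}\in G_{M^{\dagger}}$; moreover $g\in Z(T)$ iff $g\restriction_{M}$ centralizes $T_1$ in $G_M$ and $g\restriction_{M^{\dagger}}$ centralizes $T_2$ in $G_{M^{\dagger}}$. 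The reverse inclusion is routine: if $g_1$ centralizes $T_1$ in $G_M$ and $g_2$ centralizes $T_2$ in $G_{M^{\dagger}}$, then $g_1\oplus g_2$ is a bounded bijective $\mathcal{A}$-isometry commuting with $T$. Hence $Z(T)=Z(T\restriction_{M})\times Z(T\restriction_{M^{\dagger}})$.

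\emph{Step 2: identifying the factors.} Since $M$ is spanned by two light-like vectors with nonzero mutual $\mathcal{A}$-pairing (by non-degeneracy of $M$), $\mathcal{A}\restriction_{M}$ has signature $(1,1)$ and therefore absorbs the unique negative direction of $\mathcal{A}$; consequently $\mathcal{A}\restriction_{M^{\dagger}}$ is positive definite, $G_{M^{\dagger}}=\mathcal{U}(M^{\dagger})$, and in particular $Z(T\restriction_{M^{\dagger}})\subseteq\mathcal{U}(M^{\dagger})$. For the other factor, in the null basis $\{(y_1,1),(y_2,1)\}$ the matrix of $T_1$ is $\mathrm{diag}(re^{i\theta},r^{-1}e^{i\theta})$ with distinct diagonal entries, so any element of $GL(M)$ commuting with it is diagonal, $\mathrm{diag}(\alpha,\beta)$ with $\alpha,\beta\in\mathbb{C}^{*}$; the diagonal entries of $\mathcal{A}\restriction_{M}$ being $0$, the isometry condition reduces to $\alpha\overline{\beta}=1$. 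Thus $Z(T\restriction_{M})=\{\mathrm{diag}(\alpha,\overline{\alpha}^{-1}):\alpha\in\mathbb{C}^{*}\}\cong\mathbb{C}^{*}$, which under $\alpha=e^{t+i\phi}\mapsto(e^{i\phi},t)$ is identified with $S^1\times\mathbb{R}$.

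\emph{Main obstacle.} The delicate point is Step 1: in infinite dimensions the slogan ``commuting operators share eigenspaces'' must be realized through the holomorphic functional calculus, and one has to verify that the Riesz projections at $re^{i\theta}$ and $r^{-1}e^{i\theta}$ have exactly the one-dimensional ranges claimed, with no stray generalized eigenvectors. This is precisely where Theorem~\ref{t1}(2)--(3) enters: part (2) isolates these eigenvalues from $S^1$ and pins down the one-dimensional eigenspaces, while the block form in (3) lets the projections be computed on the finite-dimensional summand $M$ alone. Everything afterwards is bookkeeping with the non-degenerate splitting $H\oplus\mathbb{C}=M\oplus M^{\dagger}$.
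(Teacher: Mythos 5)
Your proof is correct, and its overall skeleton coincides with the paper's: split $Z(T)$ along the non-degenerate decomposition $H\oplus\mathbb{C}=M\oplus M^{\dagger}$, identify the $M$-factor as the diagonal isometries $\mathrm{diag}(\alpha,\overline{\alpha}^{-1})$ in the null basis (the condition $\alpha\overline{\beta}=1$ coming from $\mathcal{A}\left((y_1,1),(y_2,1)\right)\neq 0$), hence $\mathbb{C}^{*}\cong S^1\times\mathbb{R}$, and place the $M^{\dagger}$-factor inside $\mathcal{U}(M^{\dagger})$. Where you genuinely diverge is the mechanism forcing every $S\in Z(T)$ to respect the splitting: you invoke the holomorphic functional calculus, using that $re^{i\theta}$ and $r^{-1}e^{i\theta}$ are isolated in $\sigma(T)$ to form Riesz projections $P_{+},P_{-},P_{0}$ and transfer invariance to their ranges. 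The paper avoids spectral theory entirely: its Lemma \ref{l8} observes that if $S\longleftrightarrow T$ then $S$ maps each eigenspace $\ker(T-\lambda I)$ onto itself (apply the commutation to both $S$ and $S^{-1}$), and since Theorem \ref{t1}(2) already pins down the eigenspaces at $re^{i\theta}$ and $r^{-1}e^{i\theta}$ as the lines $\left<(y_1,1)\right>$ and $\left<(y_2,1)\right>$, this gives $S(M)=M$ at once; Lemma \ref{b} (an $\mathcal{A}$-isometry preserving $M$ preserves $M^{\dagger}$) then yields $S(M^{\dagger})=M^{\dagger}$ with no need for $P_0$. Your route buys robustness — Riesz ranges capture the full spectral subspaces even if eigenvectors were only ``generalized'' — but that generality is not needed here precisely because of Theorem \ref{t1}(2), so the elementary argument is shorter and sidesteps the contour-integral step you single out as the main obstacle.

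One small point you pass over: writing $Z(T\restriction_{M^{\dagger}})\subseteq\mathcal{U}(M^{\dagger})$ presupposes that $M^{\dagger}$, equipped with the positive definite form $\mathcal{A}\restriction_{M^{\dagger}}$ (Lemma \ref{parker}), is complete; positive definiteness alone only gives an inner product space. The paper settles this by repeating the Witt-theorem argument from the proof of Theorem \ref{t1}(3) (Theorem \ref{wt}): a surjective $\mathcal{A}$-isometry from $M^{\dagger}$ onto $(\left<x\right>\oplus\mathbb{C})^{\perp}$ transports completeness. Adding a line of this kind would close the only gap in your write-up.
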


The following two results also follow from the proofs of the above theorems. 
 \begin{corollary}\label{c5}
  Hyperbolic isometries either commute with boundary elliptic isometries or  hyperbolic isometries.
  \end{corollary}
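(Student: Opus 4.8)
The plan is to combine the structural description of the centralizer in Theorem \ref{p3} with the characterization of elliptic isometries in Theorem \ref{e}(3) and the definitions of the dynamical types. Fix a hyperbolic $T \in G$, and let $M = \mathrm{span}\{(y_1,1),(y_2,1)\}$ be the two-dimensional non-degenerate subspace spanned by the two light-like eigenvectors of $T$, as in Theorem \ref{t1}. By Theorem \ref{p3}, an arbitrary $S \in Z(T)$ splits as $S = S_1 \oplus S_2$ with $S_1 \in Z(T\restriction_M)$ and $S_2 \in \mathcal{U}(M^{\dagger})$. Since $T\restriction_M$ is diagonal with the two distinct eigenvalues $re^{i\theta} \neq r^{-1}e^{i\theta}$ in the basis $\{(y_1,1),(y_2,1)\}$, anything commuting with it is diagonal in the same basis, and preserving $\mathcal{A}\restriction_M$ then forces $S_1 = \mathrm{diag}(\rho e^{i\varphi},\, \rho^{-1}e^{i\varphi})$ for some $\rho > 0$ and $\varphi \in \mathbb{R}$; this is exactly the identification $Z(T\restriction_M) \cong S^1 \times \mathbb{R}$ from Theorem \ref{p3}. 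The proof then splits according to whether $\rho = 1$ or $\rho \neq 1$.

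If $\rho = 1$, then $S_1 = e^{i\varphi} I_M$ and $S = S_1 \oplus S_2$ is unitary on $H \oplus \mathbb{C}$. Because $M$ is two-dimensional, non-degenerate, and contains a light-like vector, it has signature $(1,1)$ and hence contains a time-like vector $v$; since $Sv = e^{i\varphi} v$, the isometry $S$ has a time-like eigenvector, so by Theorem \ref{e}(3) it is elliptic. Moreover the eigenspace of the time-like eigenvalue $e^{i\varphi}$ contains all of $M$, so its geometric multiplicity is at least $2$, and $S$ is therefore boundary elliptic.

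If $\rho \neq 1$, I would first show $S$ has no time-like eigenvector. Any eigenvector of $S = S_1 \oplus S_2$ decomposes as $v_1 + v_2$ with $v_1 \in M$, $v_2 \in M^{\dagger}$, each an eigenvector of the respective summand; since the two eigenvalues of $S_1$ are now distinct, $v_1$ is a scalar multiple of $(y_1,1)$ or of $(y_2,1)$, hence light-like, while $v_2$ lies in $M^{\dagger}$, which is positive definite. As $M \perp_{\mathcal{A}} M^{\dagger}$, we get $\mathcal{Q}(v_1 + v_2) = \mathcal{Q}(v_1) + \mathcal{Q}(v_2) = \mathcal{Q}(v_2) \geq 0$, with equality only if $v_2 = 0$; thus no eigenvector of $S$ is time-like and, by Theorem \ref{e}(3), $S$ is not elliptic. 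The same computation shows the light-like eigenvectors of $S$ are exactly the directions of $(y_1,1)$ and $(y_2,1)$, so the projective map induced by $S$ fixes exactly two points of $\partial B$ — the same two fixed by $T$ — and, being non-elliptic with two boundary fixed points, $S$ is hyperbolic by definition. Combining the two cases yields the dichotomy.

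The step I expect to be the main obstacle is the careful accounting of fixed points on $\partial B$ in the case $\rho \neq 1$: one must be certain that every boundary fixed point of the induced projective transformation corresponds to a light-like eigen-direction of $S$, so that no additional boundary fixed points can appear, and that a non-elliptic isometry with exactly two boundary fixed points is necessarily hyperbolic rather than exhibiting some degenerate behaviour — this relies on the classification of dynamical types underpinning Theorems \ref{e}, \ref{t1} and \ref{o}, which needs extra care in the infinite-dimensional setting. Everything else is the routine block-decomposition bookkeeping sketched above.
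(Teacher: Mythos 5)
Your argument is correct and follows essentially the route the paper intends: Corollary \ref{c5} is meant to be read off from the proof of Theorem \ref{p3}, where any $S\in Z(T)$ splits as $S_1\oplus S_2$ with $S_2\in\mathcal{U}(M^{\dagger})$ and $S_1$ diagonal in the light-like basis with eigenvalues $\alpha,\beta$ satisfying $\alpha\overline{\beta}=1$, and the dichotomy $\alpha=\beta$ (equivalently $|\alpha|=1$) versus $\alpha\neq\beta$ yields boundary elliptic versus hyperbolic. The only real divergence is how you finish the case $\rho\neq 1$: the paper's route is simply the spectral characterization of Proposition \ref{p9} (the spectrum of $S$ contains $\rho e^{i\varphi}\notin S^1$, so $S$ is hyperbolic), whereas you count boundary fixed points via Remark \ref{R1}; your version is longer but self-contained, and the obstacle you flag is already disposed of by your own computation $\mathcal{Q}(v)=\mathcal{Q}(v_2)$ together with positive definiteness of $\mathcal{Q}\restriction_{M^{\dagger}}$ (Lemma \ref{parker}): every light-like eigenvector lies in $M$, so there are exactly two boundary fixed points, and a non-elliptic isometry with two boundary fixed points is hyperbolic by the paper's definition (Hayden--Suffridge caps the count in any case). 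Two small cleanups: in the case $\rho=1$ your parenthetical claim that $S$ is ``unitary on $H\oplus\mathbb{C}$'' is unjustified, since $M\oplus M^{\dagger}$ is orthogonal only for $\mathcal{A}$ and not for the Hilbert inner product (cf.\ Theorem \ref{f}(2)); it is also never used, because the boundary-elliptic conclusion rests solely on the time-like eigenvector criterion of Theorem \ref{e}(3) and on the eigenspace containing the two-dimensional $M$. Likewise, the assertion that $M$ contains a time-like vector should be justified: it follows from Lemma \ref{parker}, since $\mathcal{A}\bigl((y_1,1),(y_2,1)\bigr)\neq 0$ and so $\mathcal{Q}\bigl((y_1,1)+c(y_2,1)\bigr)=2\,\mathrm{Re}\bigl(\overline{c}\,\mathcal{A}((y_1,1),(y_2,1))\bigr)$ takes negative values for suitable $c$.
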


  \begin{corollary}
  Two commuting hyperbolic isometries have the same fixed points.
  \end{corollary}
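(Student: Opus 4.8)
The plan is to reduce the statement to the spectral description in Theorem~\ref{t1}: a hyperbolic isometry is determined, as far as its fixed points go, by its two light-like eigenlines, which are exactly the eigenlines of the two spectral values of modulus $\neq 1$; I would then show that a commuting hyperbolic isometry is forced to share these eigenlines.

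First I would set up notation from Theorem~\ref{t1}: let $T$ have light-like eigenvectors $(y_1,1),(y_2,1)$ with $T(y_j,1)=\mu_j(y_j,1)$, $\mu_1=re^{i\theta}$, $\mu_2=r^{-1}e^{i\theta}$, $r\neq 1$, and put $M=\mathrm{span}\{(y_1,1),(y_2,1)\}$. I would first record that the fixed points of $T$ in $\overline{B}$ are precisely $\left<(y_1,1)\right>$ and $\left<(y_2,1)\right>$: the fixed points of the projective map induced by $T$ are the projectivisations of its eigenvectors, $T$ has no time-like eigenvector since it is not elliptic (cf.\ Theorem~\ref{e}(3)), and every eigenvector of $T$ with unimodular eigenvalue lies in $M^{\dagger}$, which is positive definite (it is non-degenerate and contains no negative vector, $M$ already containing one), hence space-like and not projectivising to a point of $\overline{B}$.

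Next, given a hyperbolic $S$ with $ST=TS$, I would note that $S$ preserves each one-dimensional eigenspace $\ker(T-\mu_j I)$, so $S(y_j,1)=\nu_j(y_j,1)$ with $\nu_j\in\mathbb{C}^{\times}$; equivalently, by Theorem~\ref{p3}, $S\in Z(T)=Z(T\restriction_M)\times Z(T\restriction_{M^{\dagger}})$ and its $M$-component is diagonal in the basis $(y_1,1),(y_2,1)$. The key step is then to show $|\nu_1|\neq 1$ and $|\nu_2|\neq 1$: otherwise $(y_j,1)$ would be a light-like eigenvector of $S$ with unimodular eigenvalue, impossible because, writing $S=S_1\oplus S_2$ as in Theorem~\ref{t1} with $S_2$ unitary on the positive-definite subspace $M_S^{\dagger}$, every eigenvector of $S$ with unimodular eigenvalue lies in $M_S^{\dagger}$ and is therefore space-like. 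Hence $\nu_1,\nu_2$ are the two eigenvalues of $S$ of modulus $\neq 1$, so $\left<(y_1,1)\right>$ and $\left<(y_2,1)\right>$ are exactly the light-like eigenlines of $S$, i.e.\ its two boundary fixed points, and these coincide with those of $T$.

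I expect the only genuine friction to be the implication ``unimodular eigenvalue $\Rightarrow$ space-like eigenvector'' used to force $|\nu_j|\neq 1$; this rests on $M_S^{\dagger}$ being positive definite together with the orthogonal splitting of Theorem~\ref{t1}, while the remaining steps are routine manipulation of eigenspaces of commuting operators. If one prefers to work in $Aut(B)$ rather than in $G$, one first observes that $STS^{-1}$ is conjugate to $T$ and has the same spectrum, so the central scalar relating $STS^{-1}$ to $T$ must equal $1$ (compare the unique spectral value of modulus $r$), which reduces matters to the case $ST=TS$ in $G$ treated above.
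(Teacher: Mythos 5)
Your proof is correct and follows essentially the same route the paper intends, namely the one implicit in the proof of Theorem \ref{p3}: a commuting isometry preserves the one-dimensional light-like eigenlines of $T$ (Lemma \ref{l8}), and by Remark \ref{R1} these eigenlines are exactly the boundary fixed points. Your intermediate step showing $|\nu_1|,|\nu_2|\neq 1$ is harmless but unnecessary: once $y_1$ and $y_2$ are fixed by the hyperbolic isometry $S$, which by definition has exactly two fixed points in $\overline{B}$, they must already be all of its fixed points.
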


\begin{theorem}[Centralizer of  Heisenberg translation]\label{t3}
Let $\widehat{T}=(\lambda,a',s)$ be a Heisenberg translation. Then
   \begin{enumerate}
   \item For $a'=0$, centralizer of a vertical translation $\widehat{T}$ consists of all non-hyperbolic isometries in $\widehat{G}_\infty.$
 \item For a  non-vertical translation $\widehat{T}$,
 \[Z(\widehat{T})=\left\{ \left[\begin{array}{ccc}
  \lambda'    & \left<\boldsymbol{\cdot},U^{-1}(b')\right> &   \lambda' t\\\\
    0  & U & \lambda' b'\\\\
    0 & 0 & \lambda'\\
 \end{array}\right] \in \widehat{G}_\infty\,\,:\,\,U(a')=\lambda'\,a',\,\,\left<b',a'\right> \in \mathbb{R}\right\}.\]

   \end{enumerate}
 \end{theorem}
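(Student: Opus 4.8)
The plan is to first show that $Z(\widehat{T})\subseteq\widehat{G}_\infty$, and then to impose the relation $g\widehat{T}=\widehat{T}g$ on the standard upper-triangular form of a general $g\in\widehat{G}_\infty$ and read off the resulting constraints on its parameters $\lambda',\mu',U,b',s'$. For the reduction step, write $\widehat{T}=\lambda(I+N)$ with $N$ the nilpotent part of $\widehat{T}/\lambda$. Since $s\neq 0$, a short computation shows that the light-like eigenline $\langle(e,0)\rangle$ corresponding to $\infty$ is exactly the range of $N^{2}$ when $a'\neq 0$ (with $N^{3}=0$) and the range of $N$ when $a'=0$ (with $N^{2}=0$). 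Any $g$ commuting with $\widehat{T}$ commutes with $N$, hence with $N^{2}$, and therefore preserves this one-dimensional subspace; so $g(e,0)=\lambda'(e,0)$ for some $\lambda'\in\mathbb{C}^{\times}$, i.e. $g\in\widehat{G}_\infty$. (Equivalently, the induced projective isometry commutes with the parabolic $\widehat{T}$ and so fixes its unique fixed point $\infty$.) From here on I take $g\in\widehat{G}_\infty$ in the displayed form, with $\lambda'\overline{\mu'}=1$, $U\in\mathcal{U}(\langle e\rangle^{\perp})$ and $\mathrm{Re}\,s'=\tfrac12\|b'\|^{2}$.

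Next I would multiply out the two block-triangular products $g\widehat{T}$ and $\widehat{T}g$ and compare them row by row. The bottom row is automatically satisfied; the middle row yields $U(a')=\mu'a'$; and the top row splits into the part linear in the $\langle e\rangle^{\perp}$-coordinate, which is equivalent to $U^{-1}(a')=\overline{\lambda'}a'$ (and this is the same condition as $U(a')=\mu'a'$ once $\lambda'\overline{\mu'}=1$ is used, so it contributes nothing new), and the part linear in the $\mathbb{C}$-coordinate, which — after rewriting $\langle a',U^{-1}(b')\rangle=\langle U(a'),b'\rangle$ and substituting $U(a')=\mu'a'$ — reduces to the single identity $(\lambda'-\mu')\,s=\mu'\big(\langle b',a'\rangle-\overline{\langle b',a'\rangle}\big)$.

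Now split into the two cases. If $a'\neq 0$ (the non-vertical case), then $U(a')=\mu'a'$ together with $U$ unitary forces $|\mu'|=1$, whence $\lambda'=\mu'$ by $\lambda'\overline{\mu'}=1$, and the displayed identity collapses to $\langle b',a'\rangle\in\mathbb{R}$. Thus the constraints are precisely $\lambda'=\mu'$, $U(a')=\lambda'a'$ and $\langle b',a'\rangle\in\mathbb{R}$ — exactly the set in part (2); and reversing the computation shows that every $g\in\widehat{G}_\infty$ meeting these three conditions does commute with $\widehat{T}$. If $a'=0$ (the vertical case), then $s$ is a nonzero purely imaginary number, the middle-row condition is vacuous, and the identity reduces to $(\lambda'-\mu')s=0$, i.e. $\lambda'=\mu'$; hence $Z(\widehat{T})=\{\,g\in\widehat{G}_\infty:\lambda'=\mu'\,\}=\{\,g\in\widehat{G}_\infty:|\lambda'|=1\,\}$.

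It remains, in the vertical case, to identify $\{\,g\in\widehat{G}_\infty:|\lambda'|=1\,\}$ with the set of non-hyperbolic elements of $\widehat{G}_\infty$. For $g\in\widehat{G}_\infty$ the eigenvalue at the fixed point $\infty$ is $\lambda'$, so if $g$ is hyperbolic then $|\lambda'|\neq 1$ by Theorem \ref{t1}(2). Conversely, if $|\lambda'|\neq 1$ then $|\mu'|=1/|\lambda'|\neq 1$, so $U-\mu'I$ is invertible and one can solve for a second light-like eigenvector of $g$ with eigenvalue $\mu'$; thus $g$ has two fixed points on $\partial\Sigma$, and since $\sigma(g)\not\subseteq S^{1}$ it cannot be elliptic by Theorem \ref{e}(2), so $g$ is hyperbolic. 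This yields the stated equivalence and completes part (1). The computation is otherwise routine; the delicate points are the interplay between the unitarity of $U$ and the relation $\lambda'\overline{\mu'}=1$ that pins down $\lambda'=\mu'$, and — for the vertical translation — this final dynamical-type identification, which is where the earlier structural theorems are genuinely used.
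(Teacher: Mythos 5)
Your proof is correct, and the core matrix computation (comparing $g\widehat{T}$ with $\widehat{T}g$ row by row and extracting $U(a')=\mu'a'$ together with $(\lambda'-\mu')s=\mu'\bigl(\left<b',a'\right>-\overline{\left<b',a'\right>}\bigr)$) agrees with what the paper does; the difference lies in how the problem is reduced to that computation. The paper first invokes a dynamical lemma (Lemma \ref{l10}, resting on Corollaries \ref{c4} and \ref{c5}): anything commuting with a parabolic must be boundary elliptic or parabolic and must fix $\infty$, hence lies in $\widehat{G}_\infty$ with $\lambda'=\mu'$ (via Proposition \ref{p9}, Proposition \ref{m} and Remark \ref{R2}), and only then computes with the already-restricted form. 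You instead get $Z(\widehat{T})\subseteq\widehat{G}_\infty$ purely algebraically — commuting with $\widehat{T}$ means commuting with its nilpotent part $N$, hence preserving the line $\langle(e,0)\rangle$, which is the range of $N^{2}$ (resp.\ $N$) in the non-vertical (resp.\ vertical) case — and you let the linear algebra itself force $\lambda'=\mu'$: unitarity of $U$ applied to $U(a')=\mu'a'$ when $a'\neq 0$, and $(\lambda'-\mu')s=0$ with $s\neq 0$ when $a'=0$. This is more self-contained (it does not lean on Corollary \ref{c5}, which in the paper is itself a by-product of the centralizer theorems) and it cleanly separates the algebraic constraints from the dynamical interpretation; the price is that for part (1) you must still identify $\{\lambda'=\mu'\}$ with the non-hyperbolic elements of $\widehat{G}_\infty$, which you do by producing a second light-like eigenvector when $|\lambda'|\neq 1$ — essentially re-deriving the paper's Proposition \ref{m}/Proposition \ref{p9} step. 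One small point worth making explicit in your forward implication there: to conclude $|\lambda'|\neq 1$ for a hyperbolic $g\in\widehat{G}_\infty$ you should note that $\infty$ must be one of the two boundary fixed points described in Theorem \ref{t1}, so $(e,0)$ is proportional to one of the two light-like eigenvectors with eigenvalues $re^{i\theta},r^{-1}e^{i\theta}$; a unimodular eigenvalue cannot carry a light-like eigenvector because, by the decomposition in Theorem \ref{t1}(3), its eigenspace sits in the positive-definite complement $M^{\dagger}$. With that sentence added, your argument is complete.
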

\textit{Structure of the article.} In section 2, we describe the group $Aut(B)$ along with its linear representation. In section 3, we study some significant sub-classes of $G$, provide a class of examples of each of elliptic, hyperbolic and parabolic isometries  and give infinite dimensional versions of some general results in finite dimension. Section 4 investigates conjugacy classes of elliptic and hyperbolic isometries. Section 5 studies parabolic isometries along with the discussion of conjugacy within a subclass consisting of singleton spectra. Section 6 investigates centralizers of elliptic, hyperbolic, and parabolic isometries having singleton spectra.

\subsection*{Acknowledgements}  The authors would like to thank John R. Parker for many comments and suggestions that has improved this work. The authors also acknowledge partial support from the TARE project grant TAR/2019/000379 and the SERB core research grant CRG/2022/003680. 
\section{Holomorphic automorphisms of $B$}
We now present the following set up as described in \cite {FV} which forms the basis of our study.\\

  \begin{lemma}\cite[Proposition III.2.2.]{FV} \label{lemma 1}
           An isometry in $Aut(B)$ fixing the origin is the restriction of a linear bijection on $H$ to $B$.
          \end{lemma}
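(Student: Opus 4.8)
The plan is to deduce the statement from the infinite–dimensional form of H.\ Cartan's uniqueness theorem, exploiting the circular (balanced) geometry of the unit ball. Let $f\in Aut(B)$ with $f(0)=0$ and let $L=Df(0)$ denote its Fr\'echet derivative at the origin; since $f$ is holomorphic near $0$ with values in the bounded set $B$, we have $L\in B(H)$, and differentiating the identities $f^{-1}\circ f=\mathrm{id}$ and $f\circ f^{-1}=\mathrm{id}$ at $0$ (note $f^{-1}(0)=0$) gives $D(f^{-1})(0)\,L=I=L\,D(f^{-1})(0)$, so $L$ is invertible in $B(H)$. The goal is to prove $f(x)=L(x)$ for every $x\in B$; once this is known, linearity of $L$ and the relation $L(rB)=rL(B)=rB$ for $0<r<1$ upgrade the bijectivity of $f$ on $B$ to bijectivity of $L$ on all of $H$, which is the assertion of the lemma.

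The first thing I would set up (or quote from \cite{FV}) is Cartan's uniqueness theorem: if $\Omega$ is a bounded domain in a complex Hilbert space with $0\in\Omega$ and $g\colon\Omega\to\Omega$ is holomorphic with $g(0)=0$ and $Dg(0)=I$, then $g=\mathrm{id}_\Omega$. The proof is the classical iteration argument, which survives verbatim in infinite dimensions once Cauchy estimates for homogeneous expansions are available. Expand $g=\sum_{k\ge1}P_k$ into its homogeneous parts at $0$, so $P_1=I$; assuming $g\ne\mathrm{id}$, let $m\ge2$ be least with $P_m\ne0$, and check by induction that the $n$-fold iterate has expansion $g^{[n]}=I+nP_m+(\text{terms of degree}>m)$. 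Every $g^{[n]}$ maps a fixed ball $\{\|x\|<\rho\}\subset\Omega$ into the bounded set $\Omega$, so the Cauchy formula along complex lines $P_m(x)=\tfrac{1}{2\pi i}\oint_{|\zeta|=1}\zeta^{-m-1}g^{[n]}(\zeta x)\,d\zeta$ bounds $n\|P_m\|$ uniformly in $n$; letting $n\to\infty$ forces $P_m=0$, a contradiction, hence $g=\mathrm{id}$.

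Next I would exploit that $B$ is the unit ball of $H$: for each $\lambda\in S^1$ the rotation $R_\lambda\colon x\mapsto\lambda x$ lies in $Aut(B)$. Form $h_\lambda=R_{\overline\lambda}\circ f^{-1}\circ R_\lambda\circ f\in Aut(B)$. It fixes $0$, and since all maps involved fix $0$, the chain rule gives $Dh_\lambda(0)=\overline\lambda\,L^{-1}\,(\lambda I)\,L=I$ because the scalar $\lambda$ commutes with $L$. Cartan's theorem then forces $h_\lambda=\mathrm{id}$, i.e.\ $f\circ R_\lambda=R_\lambda\circ f$, that is $f(\lambda x)=\lambda f(x)$ for all $x\in B$, $\lambda\in S^1$. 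Finally, writing $f=\sum_{k\ge1}P_k$ as its homogeneous expansion at the origin, this identity yields $\lambda^k P_k=\lambda P_k$ on $S^1$ for every $k$, hence $P_k=0$ for $k\ne1$ and $f=P_1=L$ on $B$; combined with the bijectivity remark of the first paragraph, this completes the argument.

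The step I expect to be the genuine obstacle is the foundational one behind Step 1: making precise the infinite–dimensional holomorphy that is used — Fr\'echet (or G\^ateaux) holomorphy on domains in a Hilbert space, locally uniform convergence of the homogeneous expansion $\sum P_k$, and the composition formula for the expansions of iterates — rather than the formal manipulations, which are identical to the finite–dimensional case. Since exactly this toolkit, together with the Cartan uniqueness theorem, is developed in \cite{FV}, the cleanest route is to invoke it and then carry out Steps 2 and 3 above.
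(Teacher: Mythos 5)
Your argument is correct: the paper gives no proof of this lemma, quoting it directly from Franzoni--Vesentini (Proposition III.2.2), and your Cartan-uniqueness-plus-circular-symmetry linearization is exactly the standard argument behind that cited result, so in substance you have reconstructed the source's proof rather than found a new route. The only step you gloss is passing from $f=P_1=Df(0)$ on a small ball where the homogeneous expansion converges to $f=Df(0)$ on all of $B$ (identity theorem on the connected domain $B$), which is routine and does not affect correctness.
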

          \begin{lemma}\cite[Proposition VI.1.2.]{FV} \label{lemma 2}
          A linear map $U \in B(H)$ belongs to $Aut(B)$ if and only if $U$ is unitary.
          \end{lemma}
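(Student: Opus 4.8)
The plan is to establish the two implications directly, the content being the forward one. For the reverse implication, if $U \in \mathcal{U}(H)$ then $\|Ux\| = \|x\|$ for every $x$, so $U$ carries $B$ bijectively onto $B$; being linear, $U$ is holomorphic, and so is $U^{-1} = U^{*}$, hence $U|_{B} \in Aut(B)$.

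For the forward implication I would argue as follows. Assume $U \in B(H)$ restricts to an element of $Aut(B)$, so that $U|_{B}\colon B \to B$ is a biholomorphism; in particular $U(B) \subseteq B$, and since $\|U\| = \sup_{\|x\| < 1}\|Ux\|$ this already gives $\|U\| \le 1$. The first step is to promote the bijectivity of $U|_{B}$ on $B$ to bijectivity of $U$ on all of $H$: every $x \in H$ is a scalar multiple of a vector in $B$, so injectivity and surjectivity of $U$ follow by homogeneity from those of $U|_{B}$; equivalently, one may invoke Lemma \ref{lemma 1}, as $U^{-1} \in Aut(B)$ fixes the origin (because $U$ is linear) and is therefore the restriction of a linear bijection of $H$. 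By the bounded inverse theorem $U^{-1} \in B(H)$, and since $U^{-1}|_{B} \in Aut(B)$ also maps $B$ into $B$ we get $\|U^{-1}\| \le 1$ in the same way.

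The second step combines the two norm bounds: for every $x \in H$,
\[
\|x\| = \|U^{-1}Ux\| \le \|Ux\| \le \|x\|,
\]
so $\|Ux\| = \|x\|$ for all $x$, i.e. $U$ is a surjective linear isometry of $H$. The complex polarization identity then upgrades this to $\langle Ux, Uy\rangle = \langle x, y\rangle$ for all $x,y \in H$, that is $U^{*}U = I$; together with the surjectivity of $U$ this forces $U^{*} = U^{-1}$, so $U$ is unitary.

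The one place that is not entirely formal is the passage, in infinite dimensions, from ``$U$ restricted to $B$ is a bijection of $B$ onto $B$'' to ``$U$ is a bounded invertible operator on $H$''; this rests on the homogeneity of the linear map $U$ together with the open mapping theorem, or on Lemma \ref{lemma 1}. A route that stays entirely in the holomorphic category, and is closer to the methods of \cite{FV}, would instead apply the Schwarz lemma for the unit ball of a complex Hilbert space to $U|_{B}$ and to $(U|_{B})^{-1}$ --- both holomorphic self-maps of $B$ fixing the origin --- to obtain $\|Ux\| \le \|x\|$ and $\|x\| \le \|Ux\|$ for $x \in B$ directly, and then extend by homogeneity and conclude by polarization exactly as above.
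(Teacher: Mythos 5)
Your argument is correct. Note, however, that the paper does not prove this statement at all: it is quoted verbatim from \cite{FV} (Proposition VI.1.2), so there is no in-paper proof to compare with; what you have done is supply a self-contained argument for the cited fact. Your forward direction is sound: $U(B)\subseteq B$ gives $\|U\|\le 1$; injectivity and surjectivity of $U$ on $H$ do follow from those of $U\restriction_B$ by homogeneity (every nonzero $x$ is a multiple of $x/2\|x\|\in B$), or alternatively from Lemma~\ref{lemma 1} applied to the inverse automorphism, which fixes $0$; the bounded inverse theorem then gives $U^{-1}\in B(H)$, the bound $\|U^{-1}\|\le 1$ follows because $U^{-1}\restriction_B=(U\restriction_B)^{-1}$ maps $B$ into $B$, and the two bounds together with polarization and surjectivity yield $U^*=U^{-1}$. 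The alternative route you sketch via the Schwarz lemma for holomorphic self-maps of $B$ fixing the origin is closer in spirit to the source \cite{FV} (where the Carath\'eodory metric, which linear automorphisms preserve and which determines the norm at the origin, plays the role of the Schwarz inequality), and has the advantage of avoiding the open mapping theorem entirely; either version is acceptable.
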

\begin{theorem} \cite[Theorem VI.1.3.]{FV}  A general element of $Aut(B)$ is of the form $F=U\circ f_{b}$,  $b\in B$, where $U \in \mathcal{U}(H)$ and
$x \mapsto T_{b}\left( \dfrac{x-{b}}{1-\left<x,b\right>} \right)$ which defines $f_b$ is a holomorphic automorphism of $B$.
 The map $T_{b} : H\longrightarrow H$ is a linear map expressed as
\[T_{b}(x)=\dfrac{\left<x,b \right>}{1+\sqrt{1-\|b\|^2}}b+\sqrt{1-\|b\|^2}\,\,x.\]
\end{theorem}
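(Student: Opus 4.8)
The plan is to argue in two movements: first that, for each $b\in B$, the prescribed map $f_b$ is a holomorphic automorphism of $B$; and then that any $F\in Aut(B)$ can be made linear by post-composing it with a suitable $f_b^{-1}$, after which Lemmas \ref{lemma 1} and \ref{lemma 2} conclude the argument.

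\emph{The model maps.} Fix $b\in B$ and put $s=\sqrt{1-\|b\|^2}\in(0,1]$. By Cauchy--Schwarz, $|\langle x,b\rangle|\le\|x\|\,\|b\|<1$ for every $x\in B$, so $x\mapsto 1-\langle x,b\rangle$ is zero-free on $B$ and $\psi_b(x):=(x-b)/(1-\langle x,b\rangle)$ is a holomorphic map $B\to H$, being the product of the $H$-valued affine map $x\mapsto x-b$ with the scalar holomorphic function $1/(1-\langle x,b\rangle)$. Writing $H=\langle b\rangle\oplus\langle b\rangle^{\perp}$, a direct check shows $T_b$ is the bounded positive operator acting as the identity on $\langle b\rangle$ and as $s\,I$ on $\langle b\rangle^{\perp}$; in particular $T_b$ is linear and invertible, so $f_b=T_b\circ\psi_b$ is holomorphic on $B$.

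\emph{$f_b$ is an automorphism.} The computational core is the identity
\[
1-\|f_b(x)\|^2=\frac{(1-\|x\|^2)(1-\|b\|^2)}{|1-\langle x,b\rangle|^2},\qquad x\in B,
\]
which follows by expanding $\|f_b(x)\|^2$ through the orthogonal splitting above (so that $\|T_bv\|^2=\|Pv\|^2+s^2\|Qv\|^2$, with $P,Q$ the projections onto $\langle b\rangle,\langle b\rangle^{\perp}$) and simplifying with $s^2=1-\|b\|^2$; it shows $f_b(B)\subseteq B$, and one reads off $f_b(b)=T_b(0)=0$. For bijectivity one inverts $\psi_b$ explicitly: from $y=\psi_b(x)$, taking inner products with $b$ gives $1-\langle x,b\rangle=s^2/(1+\langle y,b\rangle)$ and hence $x=s^2(1+\langle y,b\rangle)^{-1}y+b$; combined with the invertibility of $T_b$, this exhibits a holomorphic inverse of $f_b$ that maps $B$ into $B$ (by the same norm identity). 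Thus $f_b\in Aut(B)$ and, since $f_b(b)=0$, also $f_b^{-1}(0)=b$.

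\emph{Reduction of a general $F$.} Let $F\in Aut(B)$ and set $b=F^{-1}(0)\in B$. Then $G:=F\circ f_b^{-1}\in Aut(B)$ satisfies $G(0)=F\big(f_b^{-1}(0)\big)=F(b)=0$. By Lemma \ref{lemma 1}, $G$ is the restriction to $B$ of a linear bijection of $H$; since this linear bijection belongs to $Aut(B)$, Lemma \ref{lemma 2} forces it to be unitary, say $U\in\mathcal{U}(H)$. Therefore $F=G\circ f_b=U\circ f_b$, which is precisely the asserted form. I expect the only real obstacle to be the automorphism step for $f_b$: although the reasoning is entirely dimension-free, verifying the norm identity and producing an explicit two-sided inverse demand careful bookkeeping of the two components of $T_b$, and in the infinite-dimensional setting one cannot shortcut bijectivity via an open-mapping argument. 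Everything else is soft, resting only on Cauchy--Schwarz and the two cited propositions of Franzoni--Vesentini.
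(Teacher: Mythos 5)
This statement is quoted from Franzoni--Vesentini and the paper offers no proof of its own, so the only comparison available is with the cited source; your argument is correct and is essentially the standard (and the source's) route: the identity $1-\|f_b(x)\|^2=\dfrac{(1-\|x\|^2)(1-\|b\|^2)}{|1-\left<x,b\right>|^2}$ shows $f_b\in Aut(B)$ with $f_b(b)=0$, after which composing a general $F$ with $f_b^{-1}$ for $b=F^{-1}(0)$ reduces everything to Lemma \ref{lemma 1} and Lemma \ref{lemma 2}. The only step worth writing out more fully is that your explicit inverse $y\mapsto T_b^{-1}$ followed by $y\mapsto b+s^2(1+\left<y,b\right>)^{-1}y$ indeed sends $B$ into $B$, which follows by reading the same norm identity at the preimage point.
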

Here $f_b(b)=0$ and ${f_b}^{-1}=f_{-b}$.

$Aut(B)$  acts transitively on B  \cite[Proposition VI.1.5.]{FV}.\\

\textit{We will be simultaneously using the same notation for inner products on Hilbert spaces $H$ and $H \oplus \, \mathbb{C}$}. \\

A linear representation of the isometries of $Aut(B)$ is described as follows  \cite[Ch. VI, sec. 3]{FV}. Let $\mathcal{A}$ denote the Hermitian form on $ H \oplus \, \mathbb{C}$ defined by
\begin{eqnarray}\label{eq:8}
\hspace{1cm} \mathcal{A}\left((x, z),\,(y,w)\right)=\left<x,y\right> -z \overline{w}, \,\,\,\,\,\,\,\, (x,z),\, (y,w) \in H \oplus \, \mathbb{C}\label{eq:6}
\end{eqnarray}
and $\mathcal{Q}$ be the  quadratic form determined by $\mathcal{A}$. Let \(G\) be the group  of all bijective bounded linear transformations on $H \oplus \, \mathbb{C}$ leaving $\mathcal{A}$ invariant.\\
General form of an element of $G$ is $ T=\left[ {\begin{array}{cc}
   A & \xi \\
  \left<\boldsymbol{\cdot},\dfrac{A^*(\xi)}{a}\right>  & a \\
  \end{array} } \right]
$, cf. \cite[Lemma ~VI.3.1]{FV},  where  $A \in B(H)$ is bijective, $\xi \in H$,  $a \in \mathbb{C}$, $|a|^{2}=1+\|\xi\|^{2}$, and 
\begin{eqnarray*}
A^{*}A =I+\dfrac{1}{|a|^{2}}\left<\boldsymbol{\cdot},{A^{*}(\xi)}\right>A^{*}(\xi).
\end{eqnarray*}

The center $Z_G$ of $G$ is $\{e^{i \theta}{I}, \,\,\,\, \theta \in \mathbb{R}\}$, see    \cite[Lemma VI.3.4]{FV}. The following theorem tells that the group $G$ acts on $B$  surjectively, see \cite{MR} for the proof.
\begin{theorem} \cite[Theorem VI.3.5]{FV} \label{a} The map $\phi : G \longrightarrow \text{Aut(B)}$ defined by  $T \mapsto \widetilde{T}$ is an onto homomorphism where \\
$T=\left[ {\begin{array}{cc}
   A & \xi \\
  \left<\boldsymbol{\cdot},\dfrac{A^*(\xi)}{a}\right>  & a \\
  \end{array} } \right]$ and $\widetilde{T}(x)= \dfrac{A(x)+\xi}{\left<x,\dfrac{A^*(\xi)}{a}\right>+a}$, $x \in B$.
\end{theorem}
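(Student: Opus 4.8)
The plan is to realise $\widetilde T$ as the projective transformation that the linear map $T$ induces on the space of time-like lines of $H\oplus\mathbb{C}$, and then to read off the three assertions from functoriality of projectivisation together with the structure theorem for $Aut(B)$ recalled above. I first set up the projective picture: since $T\in G$ preserves $\mathcal A$ it preserves $\mathcal Q$, hence permutes the time-like vectors and therefore the time-like lines. Every time-like line has a unique representative of the form $(x,1)$, and $\mathcal Q(x,1)=\|x\|^{2}-1<0$ forces $x\in B$; conversely each $x\in B$ yields such a line, so the projectivised time-like cone is canonically identified with $B$. Writing $T(x,1)=\bigl(A(x)+\xi,\ d(x)\bigr)$ with $d(x)=\langle x,A^{*}(\xi)/a\rangle+a$ and normalising the last coordinate to $1$ produces exactly the stated formula for $\widetilde T(x)$. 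The only genuine point here is that $d(x)\neq 0$ for $x\in B$: since $T(x,1)$ is again time-like and any vector with vanishing last coordinate has $\mathcal Q\geq 0$, the equality $d(x)=0$ is impossible. Hence $\widetilde T\colon B\to B$ is well defined, and as a quotient of an affine map by a non-vanishing affine map it is holomorphic.

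Next I would verify that $\phi$ is a homomorphism. For $S,T\in G$ and $x\in B$ one has $(ST)(x,1)=S\bigl(d_{T}(x)\,(\widetilde T(x),1)\bigr)=d_{T}(x)\,S(\widetilde T(x),1)$, and $S(\widetilde T(x),1)$ normalises to $(\widetilde S(\widetilde T(x)),1)$, so the scalar cancels and $\widetilde{ST}=\widetilde S\circ\widetilde T$. As $\widetilde I=\mathrm{id}_{B}$ and $G$ is a group, each $\widetilde T$ is a holomorphic bijection with holomorphic inverse $\widetilde{T^{-1}}$, that is, $\widetilde T\in Aut(B)$; thus $\phi\colon G\to Aut(B)$ is a group homomorphism.

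For surjectivity, the structure theorem for $Aut(B)$ reduces the problem to producing preimages of every unitary $U\in\mathcal U(H)$ and of every map $f_{b}$, $b\in B$. A unitary $U$ equals $\widetilde T$ for $T=\left[\begin{smallmatrix}U&0\\0&1\end{smallmatrix}\right]\in G$. For the $f_{b}$'s I would argue via transitivity: applying Proposition \ref{p} with $\theta=0$, $U=I$ and $\xi=b/\sqrt{1-\|b\|^{2}}$ yields $T_{b}\in G$ with $\widetilde{T_{b}}(0)=\xi/a=b$, so the image of $\phi$ acts transitively on $B$; and if $\widetilde T(0)=0$ then $\xi=0$, which forces $|a|=1$ and $A\in\mathcal U(H)$, so $\widetilde T=A/a$ ranges over all of $\mathcal U(H)$, precisely the stabiliser of $0$ in $Aut(B)$. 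Hence, given an arbitrary $F\in Aut(B)$, pick $T_{1}\in G$ with $\widetilde{T_{1}}(0)=F(0)$; then $\widetilde{T_{1}}^{-1}\circ F$ fixes $0$, so it equals $\widetilde{T_{2}}$ for some $T_{2}\in G$, and $F=\widetilde{T_{1}}\circ\widetilde{T_{2}}=\phi(T_{1}T_{2})$.

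The one slightly delicate step is the bookkeeping in the surjectivity argument: one has to be sure that the operator $A$ and scalar $a$ obtained genuinely satisfy the relations defining $G$, namely $|a|^{2}=1+\|\xi\|^{2}$ and $A^{*}A=I+|a|^{-2}\langle\,\cdot\,,A^{*}\xi\rangle A^{*}\xi$, and this is exactly what Proposition \ref{p} provides. Alternatively one may write the preimage of $f_{b}$ down explicitly, after normalisation, as $\tfrac{1}{\sqrt{1-\|b\|^{2}}}\left[\begin{smallmatrix}T_{b}&-b\\ \langle\,\cdot\,,-b\rangle&1\end{smallmatrix}\right]$, using the identity $T_{b}(b)=b$. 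Everything else is a formal consequence of the projective description together with the already-established form of a general element of $Aut(B)$.
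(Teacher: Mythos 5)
Your proof is essentially correct, but note that the paper itself offers no proof of this statement to compare with: it is imported verbatim from Franzoni--Vesentini \cite[Theorem VI.3.5]{FV}, with the proof deferred to \cite{MR}. Your reconstruction follows the standard route implicit in those sources: identify $B$ with the projectivised time-like cone (each time-like line meets $\{z=1\}$ in a unique point $(x,1)$ with $x\in B$), deduce well-definedness of $\widetilde T$ from invariance of $\mathcal Q$ (a vector with vanishing last coordinate has $\mathcal Q\geq 0$, so the denominator cannot vanish), obtain the homomorphism property from the scalar cancellation under projectivisation, and prove surjectivity by combining transitivity of the image of $\phi$ on $B$ with the fact that the stabiliser of $0$ in $Aut(B)$ is exactly $\mathcal U(H)$ (Lemmas \ref{lemma 1} and \ref{lemma 2}), which clearly lies in the image. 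The one point you should make explicit is that Proposition \ref{p}, as stated, describes the form of an element of $G$ (a necessary condition), so when you use it to manufacture the element taking $0$ to $b$ you must either invoke the converse direction of \cite[Proposition 1]{MR} or check directly that your matrix preserves $\mathcal A$; your explicit candidate $\dfrac{1}{\sqrt{1-\|b\|^2}}\left[\begin{smallmatrix} T_b & -b\\ \langle\,\cdot\,,-b\rangle & 1\end{smallmatrix}\right]$ does so, using $T_b^*=T_b$, $T_b(b)=b$ and $T_b^2=\langle\,\cdot\,,b\rangle b+(1-\|b\|^2)I$, so this gap closes with a two-line computation and the argument stands.
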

Since  \({ker}(\phi )=Z_G\), the map \(\widetilde{\phi }:G/{Z_G} \longrightarrow Aut(B)\) is an
isomorphism.\\
\textit{Often we shall call $T$ as linearization of $\widetilde{T}$}.
\section{Some special sub-classes of $G$}
 Proposition \ref{p} gives a simplified form of elements of $G$.
 Notice that in the proposition
 \begin{eqnarray}\label{eq:7}
 a=\sqrt{1+\|\xi\|^2},
 \end{eqnarray}
 thus  $a \geq 1$.
   The Hilbert-adjoint operator for $T=e^{i\theta}\left[ {\begin{array}{cc}
   UA & U(\xi) \\
  \left<\boldsymbol{\cdot},\xi\right>  & a \\
  \end{array} } \right] \in G$  is
   $T^* =e^{-i\theta}\left[ {\begin{array}{cc}
   (UA)^* & \xi \\
  \left<\boldsymbol{\cdot},U(\xi)\right>  & a \\
  \end{array} } \right]$.
  Observe that $\mathcal{A}\left((x, z),(y, w)\right)=\left<A'(x,z), (y, w)\right>$ where
  \begin{eqnarray}\label{A'}
   A'=\left[ {\begin{array}{cc}
   I & 0 \\
   0 & -1 \\
  \end{array} } \right].
  \end{eqnarray}
  For any two elements $\mathbb{x},\,\mathbb{y} \in H \oplus \, \mathbb{C}$ and $T \in G$,  $\left<A'\left(T(\mathbb{x})\right), T(\mathbb{y})\right>=\left<A'(\mathbb{x}), \mathbb{y}\right>$, i.e. $T^*A'T=A'$, i.e.  $A'T^*A'={T}^{-1}$. This gives    $T^{-1}=\left[ {\begin{array}{cc}
   (UA)^* & -\xi \\
   -\left<\boldsymbol{\cdot},U(\xi)\right> & a \\
  \end{array} } \right]$.
  \begin{remark}
  Every isometry of $G$  in Proposition \ref{p} is expressible as the product of a unitary and a self-adjoint element of $G$, i.e.\\ $\left[ {\begin{array}{cc}
   UA & U(\xi) \\
  \left<\boldsymbol{\cdot},\xi\right>  & a \\
  \end{array} } \right]=\left[ {\begin{array}{cc}
   U & 0 \\
  0  & 1 \\
  \end{array} } \right]\left[ {\begin{array}{cc}
   A & \xi \\
  \left<\boldsymbol{\cdot},\xi\right>  & a \\
  \end{array} } \right].$
  \end{remark}
Every isometry in $Aut(B)$ is the restriction of a holomorphic map of some open ball containing $B$ into $H$ (see \cite[Proposition VI.1.4]{FV}). It  can be observed from computations in \cite[eq. (VI.1.6)-(VI.1.8)]{FV}  that  isometries of $Aut(B)$ leave $\partial{B}$ invariant and are bijective on it. The following theorem is due to Hayden and Suffridge which gives a fixed point classification for elements in $Aut(B)$.
 \begin{theorem} \cite[Hayden and Suffridge]{HS}
 Every element $g \in Aut(B)$ has a fixed point in $\overline{B}$. If $g \in \text{Aut(B)}$ has no fixed point in \text{B}, then the fixed point set in $\overline{\text{B}}$ consists of atmost two points.
 \end{theorem}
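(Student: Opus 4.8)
The plan is to transport the problem into the linear model of Theorem~\ref{a} and to recognise fixed points as eigenvectors. Write $g=\widetilde{T}$ with $T=e^{i\theta}\left[{\begin{array}{cc} UA & U(\xi)\\ \left<\boldsymbol{\cdot},\xi\right> & a\end{array}}\right]\in G$ as in Proposition~\ref{p}, so that $\widetilde{T}(x)=\dfrac{U(A(x)+\xi)}{\left<x,\xi\right>+a}$. The basic dictionary, obtained by a direct computation, is: $g$ fixes $x\in B$ iff $(x,1)$ is an eigenvector of $T$, in which case $\mathcal{Q}(x,1)=\|x\|^{2}-1<0$ (time-like); and $g$ fixes $p\in\partial{B}$ iff $(p,1)$ is an eigenvector of $T$, necessarily light-like. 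Conversely, any time-like or light-like eigenvector of $T$ has nonzero last coordinate and, after scaling, yields a fixed point of $g$ in $B$ or on $\partial{B}$ respectively. Both claims of the theorem thus become statements about eigenvectors of $T$.

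For existence of a fixed point in $\overline{B}$: since $a=\sqrt{1+\|\xi\|^{2}}>\|\xi\|\ge|\left<x,\xi\right>|$ for every $x\in\overline{B}$, the denominator $\left<x,\xi\right>+a$ is bounded away from $0$ on $\overline{B}$, so $\widetilde{T}$ is defined on all of $\overline{B}$; and since elements of $Aut(B)$ leave $\partial{B}$ invariant, $\widetilde{T}$ is a self-map of $\overline{B}$. I would equip $\overline{B}$ with the weak topology, making it compact (Banach--Alaoglu together with reflexivity of $H$) and convex, and then check that $\widetilde{T}$ is \emph{weakly} continuous there: if $x_{\alpha}\rightharpoonup x$ in $\overline{B}$ then $A(x_{\alpha})+\xi\rightharpoonup A(x)+\xi$ because bounded operators are weak--weak continuous, while the scalars $\left<x_{\alpha},\xi\right>+a\to\left<x,\xi\right>+a\ne 0$, whence $\widetilde{T}(x_{\alpha})\rightharpoonup\widetilde{T}(x)$. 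Tychonoff's fixed-point theorem for a continuous self-map of a compact convex subset of a locally convex space then yields a fixed point of $g$ in $\overline{B}$.

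For the bound on the number of boundary fixed points: assume $g$ has no fixed point in $B$ but has three distinct fixed points $p_{1},p_{2},p_{3}\in\partial{B}$, with light-like eigenvectors $\mathbb{p}_{j}=(p_{j},1)$ and eigenvalues $\mu_{j}$. For $i\ne j$ one has $\mathcal{A}(\mathbb{p}_{i},\mathbb{p}_{j})=\left<p_{i},p_{j}\right>-1\ne 0$, since equality in Cauchy--Schwarz would force $p_{i}=p_{j}$ (equivalently, a form with a single negative direction has no totally isotropic plane). Invariance of $\mathcal{A}$ under $T$ gives $\mu_{i}\overline{\mu_{j}}\,\mathcal{A}(\mathbb{p}_{i},\mathbb{p}_{j})=\mathcal{A}(\mathbb{p}_{i},\mathbb{p}_{j})$, hence $\mu_{i}\overline{\mu_{j}}=1$ for all $i\ne j$, and combining the three relations forces $\mu_{1}=\mu_{2}=\mu_{3}=:\mu$ with $|\mu|=1$. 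Put $M=\text{span}\{\mathbb{p}_{1},\mathbb{p}_{2}\}$: its Gram matrix for $\mathcal{A}$ is $\left[{\begin{array}{cc} 0 & c\\ \overline{c} & 0\end{array}}\right]$ with $c\ne 0$, so $\mathcal{A}\restriction_{M}$ is non-degenerate of signature $(1,1)$ and $M$ contains time-like vectors; but $T\restriction_{M}$ has the linearly independent eigenvectors $\mathbb{p}_{1},\mathbb{p}_{2}$ sharing the eigenvalue $\mu$, so $T\restriction_{M}=\mu\,I_{M}$. Then every line of $M$ is $T$-invariant, and a time-like one produces, by the dictionary, a fixed point of $g$ inside $B$ --- a contradiction. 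Together with the existence statement, this shows that a $g\in Aut(B)$ without a fixed point in $B$ has one or two fixed points, all lying on $\partial{B}$.

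I expect the main obstacle to be the existence half. Brouwer's theorem is unavailable in infinite dimensions (the closed unit ball of $\ell^{2}$ admits fixed-point-free norm-continuous self-maps), so the explicit M\"obius-type form of $\widetilde{T}$ must genuinely be exploited to obtain continuity after passing to the weak topology, where Tychonoff's theorem --- not Schauder's, since the weak topology need not be metrizable --- applies; carefully verifying this weak continuity and the self-map property on $\overline{B}$ is the crux. The ``at most two'' half is then the familiar Lorentzian-signature argument and should be essentially routine.
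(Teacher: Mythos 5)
The paper does not prove this statement at all: it is imported verbatim from the cited Hayden--Suffridge reference, so there is no internal proof to compare against. Your argument is a correct, self-contained reconstruction, and in fact it follows the same strategy as the original source: extend $\widetilde{T}$ to $\overline{B}$ (legitimate since $a-\|\xi\|>0$ bounds the denominator away from zero), observe that the extension is weak-to-weak continuous (the numerator is affine in $x$, hence weak--weak continuous, and the scalar denominator converges and stays bounded away from $0$, so the quotient of a bounded weakly convergent net by convergent nonzero scalars converges weakly), and apply the Tychonoff fixed-point theorem on the weakly compact convex set $\overline{B}$ --- your remark that Brouwer/Schauder in the norm topology is unavailable is exactly the right caution. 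The self-map property you quote from the paper can also be verified directly from invariance of $\mathcal{Q}$: $\|\widetilde{T}(x)\|^{2}-1=(\|x\|^{2}-1)/|\left<x,\xi\right>+a|^{2}$, so $B$ and $\partial{B}$ are each preserved. The ``at most two'' half is the standard Lorentzian-signature argument and is consistent with the machinery the paper itself develops later (Remark \ref{R1} for the fixed-point/eigenvector dictionary and Lemma \ref{parker} for non-orthogonality of distinct light-like vectors): three distinct boundary fixed points force a common unimodular eigenvalue, hence a two-dimensional eigenspace of signature $(1,1)$ containing a time-like eigenvector, contradicting the absence of an interior fixed point. I see no gap; the only caveat is that the existence statement concerns the extension of $g$ to $\overline{B}$, which is how both the paper and the cited reference read it.
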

 We call an isometry in $Aut(B)$ \textbf{\textit{elliptic}} if has a fixed point in $B$, \textbf{\textit{hyperbolic}}  (resp. \textbf{\textit{parabolic}}) if it is not elliptic and
has exactly two (resp.  one) fixed points  on  \(\partial {B}\).

We say an isometry of $G$ is elliptic (resp. hyperbolic or parabolic) if it is in the  pre-image of an
elliptic (resp. hyperbolic or parabolic) isometry of $Aut(B)$ under the homomorphism \(\phi \)  in Theorem \ref{a}.

We call a vector $(x,z) \in H \oplus \, \mathbb{C}$  \textit{ \textbf{time-like}} if $\mathcal{Q}(x,z)<0$, \textit{ \textbf{light-like}} if $\mathcal{Q}(x,z)=0$ and \textit{ \textbf{space-like}} if $\mathcal{Q}(x,z)>0$. A linear subspace $W \subseteq H \oplus \, \mathbb{C}$ will be called \textbf{\textit{time-like space}} if it contains a time-like vector.
\begin{remark} \label{R1}
For $x \in \overline{B}$, $x$ is a fixed point for an isometry  $\dfrac{UA(\boldsymbol{\cdot})+U(\xi)}{\left<\boldsymbol{\cdot}, \xi \right>+a } \linebreak \in \text{Aut(B)}$ if and only if $(x,1)$ is an eigenvector  for a corresponding isometry in $G$.
\end{remark}


  \begin{lemma}[\cite{MR}, Proposition 3]\label{n}
 Let \(T=\left[ {\begin{array}{cc} UA &{} r\xi  \\ \left<\boldsymbol{\cdot} ,\xi \right> &{} a\\ \end{array}}\right] \in G\), $\xi \neq 0$,
\(|r|=1\). Then $T=T_1 \oplus \, T_2$ where $T_1=T\restriction_{  \left<\xi\right> \oplus \, \mathbb{C}}$ and $T_2=T\restriction_{(\left<\xi\right> \oplus \mathbb{C})^{\perp}} \equiv U\restriction_{\left<\xi\right>^{\perp}}$. Also
\begin{enumerate}
\item \({\sigma (T)}=\{\lambda_1,\,\lambda_2\} \cup {\sigma (U\restriction_{\left<\xi \right>^{\perp }})}\)
where
\(\lambda_1, \,\lambda_2 \,\,\text{are}\,\,\dfrac{a(r+1)\pm \sqrt{a^2 (r+1)^2-4r}}{2}\). The
eigenspaces corresponding to the eigenvalues \(\lambda_1\) and \(\lambda_2\) are generated by the
eigenvectors \(\left( k_1\xi,1 \right) \) and \((k_2 \xi,1)\) respectively where \(k_1,\,k_2\) are \(\dfrac{a(r-1)\pm
\sqrt{a^2(r+1)^2-4r}}{2 \Vert \xi \Vert ^2}\).
\item \(|\lambda_1|=\dfrac{1}{|\lambda_2|}\) and \(\Vert k_1\xi \Vert =\dfrac{1}{\Vert k_2\xi \Vert }\).
\end{enumerate}
 \end{lemma}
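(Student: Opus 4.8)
The plan is to first peel off the $T$-invariant plane $W_{1}=\langle\xi\rangle\oplus\mathbb{C}$, on which the interesting dynamics lives, and then read off everything from an explicit $2\times2$ matrix. Throughout, $A$ denotes the positive operator of Proposition~\ref{p}, so $A\xi=a\xi$ and $A=I$ on $\langle\xi\rangle^{\perp}$, with $a=\sqrt{1+\|\xi\|^{2}}$; moreover, since the top-right corner of $T$ equals $r\xi$, comparison with Proposition~\ref{p} forces $U(\xi)=r\xi$, so $U$ preserves both $\langle\xi\rangle$ and $\langle\xi\rangle^{\perp}$. Put $W_{1}=\langle\xi\rangle\oplus\mathbb{C}$ and $W_{2}=\langle\xi\rangle^{\perp}\oplus\{0\}$, and note $W_{2}=W_{1}^{\perp}$ for the ambient inner product on $H\oplus\mathbb{C}$.

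First I would check that $T$ leaves both $W_{1}$ and $W_{2}$ invariant: a direct computation gives $T(\xi,0)=(ar\,\xi,\|\xi\|^{2})\in W_{1}$, $T(0,1)=(r\xi,a)\in W_{1}$, and $T(v,0)=(Uv,0)\in W_{2}$ for every $v\perp\xi$. Hence $T=T_{1}\oplus T_{2}$ with $T_{1}=T\restriction_{W_{1}}$ and $T_{2}=T\restriction_{W_{2}}=U\restriction_{\langle\xi\rangle^{\perp}}$. Since this is a block-diagonal decomposition along a direct sum, $(T-\lambda)^{-1}=(T_{1}-\lambda)^{-1}\oplus(T_{2}-\lambda)^{-1}$ whenever either side makes sense, so $\sigma(T)=\sigma(T_{1})\cup\sigma(T_{2})$ and $\sigma(T_{2})=\sigma(U\restriction_{\langle\xi\rangle^{\perp}})$; it remains to analyse $T_{1}$.

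In the (orthogonal, unnormalised) basis $\{(\xi,0),(0,1)\}$ of $W_{1}$, the matrix of $T_{1}$ has columns $(ar,\|\xi\|^{2})$ and $(r,a)$, so its characteristic polynomial is $\lambda^{2}-a(r+1)\lambda+(a^{2}r-r\|\xi\|^{2})$; substituting $\|\xi\|^{2}=a^{2}-1$ collapses the constant term to $r$, giving $\lambda^{2}-a(r+1)\lambda+r$, hence $\sigma(T_{1})=\{\lambda_{1},\lambda_{2}\}$ with the stated values. Solving $(T_{1}-\lambda_{j}I)(k\xi,1)=0$ gives $k=r/(\lambda_{j}-ar)$; this normalisation is legitimate because the $(2,1)$-entry $\|\xi\|^{2}$ is nonzero, so every eigenvector of $T_{1}$ has nonzero last coordinate, and being an eigenvector of $T_{1}$ on the $T$-invariant space $W_{1}$ it is an eigenvector of $T$. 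Writing $\lambda_{j}-ar=\tfrac12\bigl(-a(r-1)\pm\sqrt{\Delta}\bigr)$ with $\Delta=a^{2}(r+1)^{2}-4r$, rationalising, and using $a^{2}(r-1)^{2}-a^{2}(r+1)^{2}=-4a^{2}r$ together with $1-a^{2}=-\|\xi\|^{2}$, one obtains $k_{j}=\dfrac{a(r-1)\pm\sqrt{\Delta}}{2\|\xi\|^{2}}$.

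For part (2): the product of the roots of the monic polynomial $\lambda^{2}-a(r+1)\lambda+r$ is $\lambda_{1}\lambda_{2}=r$, so $|\lambda_{1}|\,|\lambda_{2}|=|r|=1$; and $k_{1}k_{2}=\dfrac{a^{2}(r-1)^{2}-\Delta}{4\|\xi\|^{4}}=\dfrac{4r(1-a^{2})}{4\|\xi\|^{4}}=\dfrac{-r}{\|\xi\|^{2}}$, whence $\|k_{1}\xi\|\,\|k_{2}\xi\|=|k_{1}k_{2}|\,\|\xi\|^{2}=1$. None of the steps is deep; the whole argument is bookkeeping, and the only place that needs genuine care is the algebraic identity $(r-1)^{2}-(r+1)^{2}=-4r$ together with $a^{2}-1=\|\xi\|^{2}$ — this is exactly what makes both the closed form for $k_{j}$ and the two reciprocity relations fall out, so the real obstacle is just keeping the signs and the $\pm$ pairings straight.
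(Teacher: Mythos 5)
Your argument is correct, and it is essentially the canonical one: the paper itself does not reprove this lemma (it is imported from \cite{MR}, Proposition 3), and your route — reading $U(\xi)=r\xi$ off Proposition \ref{p}, splitting $H\oplus\mathbb{C}$ into $\langle\xi\rangle\oplus\mathbb{C}$ and its orthogonal complement, and extracting $\lambda_i$, $k_i$ and the reciprocity relations from the $2\times 2$ block $\begin{bmatrix} ar & r\\ \|\xi\|^2 & a\end{bmatrix}$ with $a^2-\|\xi\|^2=1$ — is exactly the computation the statement encodes and the way the present paper uses it (e.g.\ in Proposition \ref{c}). The only caveat, inherited from the statement rather than your proof, is that the displayed vectors generate the $\lambda_i$-eigenspaces inside $\langle\xi\rangle\oplus\mathbb{C}$; if $\lambda_i$ also occurs as an eigenvalue of $U\restriction_{\langle\xi\rangle^{\perp}}$ the full eigenspace of $T$ is larger, a possibility the paper itself handles separately in the proof of Proposition \ref{c}.
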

 The isometries considered in the above lemma constitute of a subclass of $G$ whose elements decompose $H \oplus \, \mathbb{C}$ orthogonally into a two dimensional subspace  containing $\mathbb{C}$ and its orthogonal complement (see \cite[Proposition 2]{MR}). \\
 The next result provides a class of  examples of each of  elliptic, hyperbolic and parabolic isometries.

 \begin{proposition} \label{c}
  Let  $T=\left[{\begin{array}{cc}
  UA & r\xi\\
  \left<\boldsymbol{\cdot},\xi\right> & a\\
  \end{array}}\right] \in G$, $\xi \neq 0$, $|r|=1$ be such that $T=T_1 \oplus T_2$ (cf. Lemma \ref{n}). Then $T$ is elliptic for $r=-1$, parabolic for $r=\left(\dfrac{2}{a^2}-1 \right)\pm i \left(\dfrac{2\|\xi\|}{a^2}\right)$ and hyperbolic for rest of the values of $r$ on $S^1$.
  \end{proposition}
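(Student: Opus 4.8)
The plan is to classify $\widetilde{T}\in Aut(B)$ by transporting the question to the eigendata of $T$. By Remark \ref{R1}, a point $x\in\overline{B}$ is fixed by $\widetilde{T}$ precisely when $(x,1)$ is an eigenvector of $T$, so I would determine, for each $r\in S^1$, all eigenvectors of $T$ of the form $(x,1)$ and locate the corresponding $x$ relative to $\partial B$. Lemma \ref{n} supplies the bookkeeping: $T=T_1\oplus T_2$ with $T_2\equiv U\restriction_{\langle\xi\rangle^{\perp}}$ unitary, so $\sigma(T_2)\subseteq S^1$; the eigenvalues of $T$ not lying in $\sigma(T_2)$ are the two roots $\lambda_1,\lambda_2$ of $t^2-a(r+1)t+r=0$, with one-dimensional eigenspaces spanned by $(k_1\xi,1)$ and $(k_2\xi,1)$, and $\|k_1\xi\|\,\|k_2\xi\|=1$. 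An eigenvector of $T$ of the shape $(x,1)$ must be $(k_i\xi+v,1)$ with $v\in\langle\xi\rangle^{\perp}$ lying in $\ker(T_2-\lambda_i I)$, and $\|k_i\xi+v\|^2=\|k_i\xi\|^2+\|v\|^2$; hence the fixed points of $\widetilde{T}$ in $\overline{B}$ reduce to $k_1\xi$ and $k_2\xi$ (an interior one forces $v=0$), and $\mathcal{Q}(k_i\xi,1)=\|k_i\xi\|^2-1$ tells whether $k_i\xi$ lies inside, on, or outside $\partial B$. Since the product of the two norms is $1$, only three situations can occur, and proving the statement amounts to sorting the values $r\in S^1$ into them.

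For $r=-1$ I would compute directly: $a^2(r+1)^2-4r=4$, so $\lambda_1,\lambda_2=\pm1$ and $k_i=\dfrac{-a\pm1}{\|\xi\|^2}$; with $k_1=\dfrac{1-a}{\|\xi\|^2}$ one gets $\|k_1\xi\|^2=\dfrac{(a-1)^2}{\|\xi\|^2}=\dfrac{a-1}{a+1}<1$ (using $\|\xi\|^2=a^2-1$ and $a>1$), so $k_1\xi\in B$ is fixed and $\widetilde{T}$ is elliptic. For the parabolic values I would solve $a^2(r+1)^2-4r=0$, i.e. the quadratic $a^2r^2+(2a^2-4)r+a^2=0$ in $r$, whose roots are $\bigl(\tfrac{2}{a^2}-1\bigr)\pm i\,\tfrac{2\sqrt{a^2-1}}{a^2}=\bigl(\tfrac{2}{a^2}-1\bigr)\pm i\,\tfrac{2\|\xi\|}{a^2}$ and do lie on $S^1$. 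For such $r$ one has $\lambda_1=\lambda_2=\tfrac{a(r+1)}{2}$ and a single eigenvector $(k_1\xi,1)$ with $k_1=\tfrac{a(r-1)}{2\|\xi\|^2}$, and I would check it is light-like: from $|r|=1$ we get $|r-1|^2+|r+1|^2=4$, while taking moduli in $a^2(r+1)^2=4r$ gives $a^2|r+1|^2=4$, so $a^2|r-1|^2=4a^2-4=4\|\xi\|^2$ and $\|k_1\xi\|^2=1$. Combined with the reduction above (no fixed point inside $B$) and the Hayden--Suffridge bound applied to $\widetilde{T}\neq\mathrm{id}$, this yields exactly one fixed point on $\partial B$: $\widetilde{T}$ is parabolic.

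For the remaining $r$ the point is to show that $\lambda_1,\lambda_2$ leave $S^1$. Once that is known, the identity $\mathcal{A}(v,v)=\mathcal{A}(Tv,Tv)=|\lambda|^2\mathcal{A}(v,v)$ for an eigenvector $v$ of $T\in G$ forces $\mathcal{Q}(k_i\xi,1)=0$, i.e. $\|k_i\xi\|=1$ for $i=1,2$; as $\lambda_1\neq\lambda_2$ these give two distinct points of $\partial B$, and $\lambda_i\notin S^1\supseteq\sigma(T_2)$ rules out an interior fixed point, so $\widetilde{T}$ is hyperbolic. The cleanest way I see to run this is the substitution $t=\sqrt{r}\,s$: since $\dfrac{r+1}{\sqrt{r}}=\sqrt{r}+\overline{\sqrt{r}}=2\,\text{Re}\,\sqrt{r}$, the characteristic equation becomes the real quadratic $s^2-2a(\text{Re}\,\sqrt{r})\,s+1=0$, whose roots multiply to $1$ and whose discriminant is $4\bigl(a^2(\text{Re}\,\sqrt{r})^2-1\bigr)$; using $(\text{Re}\,\sqrt{r})^2=\tfrac{1}{2}(1+\text{Re}\,r)$, the three regimes (a pair of non-real roots on $S^1$, a real double root, a pair of distinct real roots of reciprocal moduli) are governed by the sign of $\text{Re}\,r-\bigl(\tfrac{2}{a^2}-1\bigr)$, which is exactly the trichotomy above.

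I expect this last step — tracking how the spectrum of $T$ moves as $r$ runs over the circle, and in particular pinning down for which $r$ the roots $\lambda_1,\lambda_2$ stay on $S^1$ and for which they are pushed off — to be where the actual work lies; everything else is a short computation once Lemma \ref{n} and Remark \ref{R1} are in hand. One should also be slightly careful, in the elliptic regime, to certify that an interior fixed point really appears: since $\lambda_1\neq\lambda_2$ lie on $S^1$, $\mathcal{A}\bigl((k_1\xi,1),(k_2\xi,1)\bigr)(1-\lambda_1\overline{\lambda_2})=0$ forces $\langle k_1\xi,k_2\xi\rangle=1$, which together with $\|k_1\xi\|\,\|k_2\xi\|=1$ and Cauchy--Schwarz excludes $\|k_1\xi\|=\|k_2\xi\|=1$, so one of the two norms is strictly less than $1$.
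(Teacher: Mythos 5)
Your framework is the paper's: Lemma \ref{n} plus Remark \ref{R1}, the value of $\|k_i\xi\|$ deciding elliptic/parabolic/hyperbolic, and the observation that any eigenvector of the shape $(x,1)$ is $(k_i\xi+v,1)$ with $v\in\ker(T_2-\lambda_i I)$, so no extra fixed points can appear. Your $r=-1$ and parabolic computations agree with the paper's. The problem is the third step. The substitution $t=\sqrt r\,s$ is correct and shows that the behaviour of $\lambda_1,\lambda_2$ is governed by the sign of $\mathrm{Re}\,r-\bigl(\tfrac{2}{a^2}-1\bigr)$; but this is \emph{not} ``exactly the trichotomy above'': the set $\{r\in S^1:\ \mathrm{Re}\,r<\tfrac{2}{a^2}-1\}$ is a whole open arc around $-1$, not the single point $r=-1$. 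On that arc your own final paragraph (conjugate roots of the real quadratic, so $\lambda_1\neq\lambda_2$ on $S^1$, so $\mathcal{A}$-orthogonal eigenvectors, so $\langle k_1\xi,k_2\xi\rangle=1$ with $\|k_1\xi\|\,\|k_2\xi\|=1$ and not both norms equal to $1$) produces an interior fixed point, i.e.\ an \emph{elliptic} isometry, contradicting the statement you set out to prove, which declares all such $r\neq-1$ hyperbolic. So, as a proof of Proposition \ref{c}, the proposal fails exactly where you predicted the work would lie: $\lambda_1,\lambda_2$ do not leave $S^1$ for all the remaining $r$, and no argument can make them.

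The conflict is not an error on your side: the clause ``hyperbolic for the rest of the values of $r$'' is too strong. A concrete check: $H=\mathbb{C}$, $\xi=1$, $a=\sqrt2$, $r=e^{3\pi i/4}$ satisfies every hypothesis, the parabolic values are $\pm i$, yet $\widetilde T(x)=\dfrac{\sqrt2\,rx+r}{x+\sqrt2}$ fixes a point of modulus $\approx 0.47$ inside $B$, so this $T$ is elliptic although $r\neq-1,\pm i$. The paper's own ``hyperbolic'' computation silently uses $\overline{\sqrt{a^2(r+1)^2-4r}}=\tfrac1r\sqrt{a^2(r+1)^2-4r}$ when it replaces $\overline r$ by $1/r$ under the root; since $a^2(r+1)^2-4r=4r\bigl(a^2(\mathrm{Re}\sqrt r)^2-1\bigr)$, this branch identity holds only when $\mathrm{Re}\,r\geq\tfrac{2}{a^2}-1$, and on the complementary arc it acquires a sign, so the asserted $\|k_1\xi\|=1$ fails there. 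Your sign analysis, pushed to its honest conclusion (elliptic iff $\mathrm{Re}\,r<\tfrac{2}{a^2}-1$, parabolic iff equality, hyperbolic iff $\mathrm{Re}\,r>\tfrac{2}{a^2}-1$), is the correct classification; the proposal should state that discrepancy explicitly instead of asserting agreement with the proposition as written.
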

\begin{proof}
In view of Lemma \ref{n}, we perform the following analysis. Let $r=-1$. This gives that $k_1=\dfrac{-2a+2}{2\|\xi\|^2}=\dfrac{1-a}{\|\xi\|^2}$. So ${k_1}^2=\dfrac{(1-a)^2}{\|\xi\|^4}$ and  $\|k_1 \xi\|^2={k_1}^2\|\xi\|^2=\dfrac{(1-a)^2}{\|\xi\|^2}$. Observe that by (\ref{eq:7}), $(1-a)^2-\|\xi\|^2=1+a^2-2a+1-a^2=2-2a<0$ as $\xi \neq 0$. This implies that   $(1-a)^2<\|\xi\|^2$, i.e. $\dfrac{(1-a)^2}{\|\xi\|^2} <1$. Hence  $T$ is elliptic by Remark \ref{R1}.

We will see that $r=\left(\dfrac{2}{a^2}-1\right) \pm i \left(\dfrac{2\|\xi\|}{a^2}\right)$ if and only if $\lambda_1=\lambda_2$.  From the value  of $\lambda_i$, we have
$\lambda_1=\lambda_2$ if and only if  $a^2(r+1)^2=4r$. 
This implies $\text{Re r}=\dfrac{2}{a^2}-1$.
 
 As $|r|^2=(\text{Re}\,r)^2+(\text{Im}\,r)^2=1$, $(\text{Im}\,r)^2=1-\left(\dfrac{2}{a^2}-1\right)^2=\dfrac{4}{a^2}-\dfrac{4}{a^4}=\dfrac{4(a^2-1)}{a^4}=\dfrac{4\|\xi\|^2}{a^4}$ (by (\ref{eq:7})) which gives that  $r=\left(\dfrac{2}{a^2}-1\right)\pm i\left(\dfrac{2\|\xi\|}{a^2}\right)$. Thus $\lambda_1=\lambda_2$ if and only if $r=\left(\dfrac{2}{a^2}-1\right)\pm i\left(\dfrac{2\|\xi\|}{a^2}\right)$. Also $k_i=\dfrac{a(r-1)}{2\|\xi\|^2}$. This implies
 \begin{align*}
  \|k_{i}\xi\|^2 &=|k_{i}|^2\|\xi\|^2\\
  &=\dfrac{a^2|r-1|^2}{4\|\xi\|^2}\\
  &=\dfrac{a^2(r-1)(\overline{r}-1)}{4\|\xi\|^2}\\
  &=\dfrac{a^2(1-r-\overline{r}+1)}{4\|\xi\|^2}\\
  &=\dfrac{a^2(2-2\,\text{Re}\,r)}{4\|\xi\|^2}\\
  &=\dfrac{a^2(1-\,\text{Re}\,r)}{2\|\xi\|^2}\\
  &=1
  \end{align*}
   as $1-\,\text{Re}\,r=1-\dfrac{2}{a^2}+1=2-\dfrac{2}{a^2}=\dfrac{2\|\xi\|^2}{a^2}$.
So for the case of identical eigenvalues of $T_1$, it has a fixed point on $\partial{B}$ in view of Remark \ref{R1}.  We will see that $T$ cannot have a fixed point inside the ball and has exactly one fixed point on $\partial{B}$. There will be two possibilities. Either $\sigma(T_1) \cap \sigma(T_2)=\emptyset$ or $\sigma(T_1) \cap \sigma(T_2)\neq \emptyset$. In the former case, the eigenvectors of $T$ will  come strictly either from $\left<\xi\right> \oplus \mathbb{C}$ or from $\left<\xi\right>^{\perp}$. There is exactly one eigenvector of $T_1$ of the form $(x,1)$ for which $x=k_i\xi \in \partial{B}$ and eigenvectors of $T_2$ are of the form $(y,0)$, if any, for $y \in \left<\xi\right>^{\perp}$. Thus in this case, $T$ does not have a fixed point inside the ball and cannot have more than one fixed point on the boundary in view of Remark \ref{R1}. In the latter case, let $(y,0)$ be an eigenvector of $T_2$ corresponding to the common eigenvalue. Thus $T$ has an eigenvector of the type $(k_i \xi,1)+(y,0)=(k_i \xi+y,1)$ but  $\|k_i\xi+y\|^2=\|k_i\xi\|^2+\|y\|^2>1$ as $\|k_i\xi\|=1$ and $y \neq 0$. So for this case as well, $T$  cannot have a fixed point inside the ball and more than one fixed point on boundary of the ball in view of Remark \ref{R1}. Hence $T$ is parabolic.\\
Let $\lambda_1 \neq \lambda_2$ and $r \neq -1$. This gives that
\begin{eqnarray*}
  \|k_1\xi\|^2 &=&\dfrac{\left| a(r-1)+\sqrt{a^2(r+1)^2-4r}\right| ^2}{4\|\xi\|^2}\\
  &=& \dfrac{\left(a(r-1)+\sqrt{a^2(r+1)^2-4r}\right)\left(a\left(\frac{1}{r}-1\right)+\sqrt{a^2\left(\frac{1}{r}+1\right)^2-\frac{4}{r}}\right)}{4\|\xi\|^2}\\
  &=& \dfrac{\left(a(r-1)+\sqrt{a^2(r+1)^2-4r}\right)\left(a(1-r)+\sqrt{a^2(1+r)^2-4r}\right)}{4r\|\xi\|^2}\\
  &=& \dfrac{-a^2(r-1)^2+a^2(r+1)^2-4r}{4r\|\xi\|^2}\\
  &=& \dfrac{4ra^2-4r}{4r\|\xi\|^2}\\
  &=& 1 \,\, \text{by} \,\,(\ref{eq:7})\\
  &=& \|k_2 \xi\|^2 \,\,\,\,\,\,\text{by}\,\,\text{Lemma}\,\,\ref{n}\,(2).
  \end{eqnarray*}
In view of Remark \ref{R1}, above computation  shows that  $T$ has two fixed points on $\partial{B}$. Also, by the same logic as given for the parabolic case, it cannot have a fixed point inside the ball, hence $T$ is hyperbolic.

  \end{proof}

   Next result explores normal isometries in $G$.
 \begin{theorem}\cite[Theorem 3]{MR}\label{f} Let $T =e^{i \theta}\left[ {\begin{array}{cc}
   UA & U(\xi) \\
   \left<\boldsymbol{\cdot},\xi\right> & a \\
  \end{array} } \right]\in G$, $\theta \in \mathbb{R}$. Then

  \begin{enumerate}
 \item $T$ is normal if and only if $T=e^{i \theta}\left[ {\begin{array}{cc}
   UA & \xi \\
   \left<\boldsymbol{\cdot},\xi\right> & a \\
  \end{array} } \right]$, i.e. $U(\xi)=\xi$.

  \item  $T$ is unitary if and only if  $T=e^{i \theta} \left[ {\begin{array}{cc}
   U & 0 \\
   0 & 1 \\
  \end{array} } \right]$, i.e. $\xi=0$.
  \item If $T$ is normal and $\xi \neq 0$, then  $\sigma(T)=e^{i \theta}\{a \pm \|\xi\| \} \cup \sigma\left(e^{i \theta}U \restriction_{\left<\xi\right>^{\perp}}\right)$, where $a \pm \|\xi\|$ are both positive, different from $1$ and inverses of each other.  Eigenspaces corresponding to the eigenvalues $e^{i \theta}(a \pm \|\xi\|)$ are spanned by the eigenvectors  $\left(\pm \dfrac{\xi}{\|\xi\|},1\right)$ respectively.
  \item In $G$, a unitary element  is elliptic and a non-unitary normal isometry  is hyperbolic.
  \end{enumerate}
  \end{theorem}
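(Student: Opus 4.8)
The plan is to handle the four parts in order, leaning entirely on the explicit matrix descriptions already available: the form of a general element of $G$ in Proposition \ref{p}, the adjoint and inverse formulas recorded in the Remark preceding Lemma \ref{n}, and the spectral decomposition of Lemma \ref{n}.

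For parts (1) and (2), write $T=e^{i\theta}\left[{\begin{array}{cc} UA & U(\xi)\\ \left<\boldsymbol{\cdot},\xi\right> & a\end{array}}\right]$ and use $T^{*}=e^{-i\theta}\left[{\begin{array}{cc} (UA)^{*} & \xi\\ \left<\boldsymbol{\cdot},U(\xi)\right> & a\end{array}}\right]$ and $T^{-1}=e^{-i\theta}\left[{\begin{array}{cc} (UA)^{*} & -\xi\\ -\left<\boldsymbol{\cdot},U(\xi)\right> & a\end{array}}\right]$. Comparing $T^{*}$ with $T^{-1}$ entrywise shows that $T$ is unitary if and only if $\xi=-\xi$, i.e. $\xi=0$; in that case $a=1$ and $A=I$, so $T=e^{i\theta}(U\oplus 1)$, and conversely this operator is visibly unitary, which is part (2). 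For part (1), the necessity of $U(\xi)=\xi$ comes from computing the $\mathbb{C}$-component of $TT^{*}(x,z)$ and of $T^{*}T(x,z)$: both equal $(\|\xi\|^{2}+a^{2})z$ plus a term $2a\left<x,U(\xi)\right>$ in one case and $2a\left<x,\xi\right>$ in the other, and since $a\geq 1>0$ this forces $\left<x,U(\xi)\right>=\left<x,\xi\right>$ for all $x$, hence $U(\xi)=\xi$. For sufficiency, note that $U(\xi)=\xi$ (with $U$ unitary) gives also $U^{*}(\xi)=\xi$ and, since $A=I$ on $\left<\xi\right>^{\perp}$ and $A(\xi)=a\xi$ while $U$ preserves $\left<\xi\right>$ and $\left<\xi\right>^{\perp}$, also $UA=AU$; by the Remark, $T=e^{i\theta}(U\oplus 1)S$ with $S=\left[{\begin{array}{cc} A & \xi\\ \left<\boldsymbol{\cdot},\xi\right> & a\end{array}}\right]$ self-adjoint, $U\oplus 1$ unitary, and the two commuting. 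A product of a commuting unitary and self-adjoint operator is normal, and $e^{i\theta}$ times a normal operator is normal, so $T$ is normal.

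For part (3), by (1) and (2) we have $\xi\neq 0$ and $U(\xi)=\xi$, so $T$ is $e^{i\theta}$ times an operator of the exact form treated in Lemma \ref{n} with $r=1$. Substituting $r=1$ there, the discriminant $a^{2}(r+1)^{2}-4r$ becomes $4a^{2}-4=4\|\xi\|^{2}$ (using $a^{2}=1+\|\xi\|^{2}$), giving $\lambda_{1,2}=a\pm\|\xi\|$ and $k_{1,2}=\pm 1/\|\xi\|$; multiplying back by $e^{i\theta}$ yields eigenvalues $e^{i\theta}(a\pm\|\xi\|)$ with eigenvectors $\left(\pm\xi/\|\xi\|,1\right)$, while the remaining spectrum is $\sigma(e^{i\theta}U\restriction_{\left<\xi\right>^{\perp}})\subseteq e^{i\theta}S^{1}$. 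Since $a>\|\xi\|\geq 0$, both $a\pm\|\xi\|$ are positive; $(a+\|\xi\|)(a-\|\xi\|)=a^{2}-\|\xi\|^{2}=1$ gives that they are mutually inverse, and with $a\geq 1$, $\|\xi\|>0$ this forces $a+\|\xi\|>1>a-\|\xi\|$, so neither equals $1$. For part (4): if $T$ is unitary then $T=e^{i\theta}(U\oplus 1)$ fixes $(0,1)$, and as $0\in B$, Remark \ref{R1} shows the induced $\widetilde{T}\in Aut(B)$ fixes the origin, hence is elliptic. If $T$ is non-unitary normal, then $\xi\neq 0$, $U(\xi)=\xi$, and Lemma \ref{n} (as in (3)) gives $T=T_{1}\oplus T_{2}$ with $T_{1}=T\restriction_{\left<\xi\right>\oplus\mathbb{C}}$ and $T_{2}=e^{i\theta}U\restriction_{\left<\xi\right>^{\perp}}$; the eigenvectors $\left(\pm\xi/\|\xi\|,1\right)$ of $T_{1}$ satisfy $\mathcal{Q}(\pm\xi/\|\xi\|,1)=1-1=0$, so they are light-like. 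Because $\sigma(T_{1})=\{e^{i\theta}(a\pm\|\xi\|)\}$ consists of scalars of modulus $\neq 1$ whereas $\sigma(T_{2})\subseteq e^{i\theta}S^{1}$ has modulus $1$, these spectra are disjoint; hence any eigenvector of $T=T_{1}\oplus T_{2}$ lies entirely in $\left<\xi\right>\oplus\mathbb{C}$ (a scalar multiple of one of the two light-like vectors) or entirely in $\left<\xi\right>^{\perp}$ (a vector $(y,0)$, which is space-like when $y\neq 0$). In particular no eigenvector of $T$ is time-like, so by Remark \ref{R1} $\widetilde{T}$ has no fixed point inside $B$ and its only fixed points in $\overline{B}$ are $\pm\xi/\|\xi\|\in\partial B$; being non-elliptic with exactly two boundary fixed points, $\widetilde{T}$ is hyperbolic.

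The main obstacle is part (4): ruling out an interior fixed point of a non-unitary normal isometry. The key is the orthogonal splitting $T=T_{1}\oplus T_{2}$ together with the fact that $\sigma(T_{1})$ and $\sigma(T_{2})$ sit on circles of different radii and are therefore disjoint, which confines every eigenvector to the light cone or to the space-like part and forbids eigenvectors that mix the two summands; one also checks, via the same disjointness and $\|x\|\leq 1$, that a boundary fixed point $(x,1)$ cannot pick up a nonzero component in $\left<\xi\right>^{\perp}$. Parts (1)--(3) are then essentially bookkeeping with the explicit forms already in hand.
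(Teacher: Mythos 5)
Your proposal is correct, but note that the paper does not prove Theorem \ref{f} at all: it is imported verbatim from \cite[Theorem 3]{MR}, so there is no in-paper proof to compare against. Judged on its own terms, your argument is sound and self-contained modulo the same ingredients the paper itself relies on: the entrywise comparison of $T^{*}$ with $T^{-1}$ settles (2); the equality of the $\mathbb{C}$-components of $TT^{*}$ and $T^{*}T$ correctly forces $U(\xi)=\xi$, and the converse via the commuting unitary--self-adjoint factorization (using $U\xi=\xi\Rightarrow U^{*}\xi=\xi$ and $UA=AU$) is valid; (3) is exactly Lemma \ref{n} at $r=1$ rescaled by $e^{i\theta}$; and in (4) the observation that $\sigma(T_{1})$ and $\sigma(T_{2})$ lie on circles of different radii, so every eigenvector sits wholly in $\left<\xi\right>\oplus\mathbb{C}$ or wholly in $\left<\xi\right>^{\perp}$, combined with Remark \ref{R1}, does rule out interior fixed points and pins down exactly two boundary fixed points -- this is the same mechanism the paper uses in its proof of Proposition \ref{c}, where the spectra need not be disjoint and a second case must be handled, whereas for a non-unitary normal isometry the disjointness is automatic. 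One cosmetic slip: $e^{i\theta}(U\oplus 1)$ does not fix $(0,1)$ unless $\theta\in 2\pi\mathbb{Z}$; it has $(0,1)$ as an eigenvector, which is what Remark \ref{R1} actually requires, so the conclusion that the induced automorphism fixes the origin stands.
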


 The following lemma is motivated from \cite[Lemma 2.13]{JC}.
 \begin{lemma} \label{b}
  For every $T \in G$, if $M$ is a  subspace of $H \oplus  \mathbb{C}$ such that $T:M \longrightarrow M$ is a bijection,  then  $T$ maps $M^{\dagger}$ to $M^{\dagger}$ bijectively.
  \end{lemma}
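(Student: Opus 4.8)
The plan is to use only the two defining properties of an element $T\in G$: that $T$ is a linear bijection of $H\oplus\mathbb{C}$ and that it preserves the form, $\mathcal{A}(Tx,Ty)=\mathcal{A}(x,y)$ for all $x,y\in H\oplus\mathbb{C}$. Recall that $M^{\dagger}=\{x\in H\oplus\mathbb{C}:\mathcal{A}(x,m)=0\text{ for all }m\in M\}$. First I would observe that $T^{-1}\in G$ as well (the composition-inverse of a bijective $\mathcal{A}$-isometry is again a bijective $\mathcal{A}$-isometry), and that $T^{-1}$ also carries $M$ bijectively onto $M$: given $m\in M$, surjectivity of $T\restriction_{M}$ furnishes $m'\in M$ with $Tm'=m$, whence $T^{-1}m=m'\in M$, and $T^{-1}$ is injective on $M$ because $T$ is injective on all of $H\oplus\mathbb{C}$.

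The core step is the inclusion $T(M^{\dagger})\subseteq M^{\dagger}$. Take $v\in M^{\dagger}$ and an arbitrary $m\in M$; using surjectivity of $T\restriction_{M}$ write $m=Tm'$ with $m'\in M$. Then
\[
\mathcal{A}(Tv,m)=\mathcal{A}(Tv,Tm')=\mathcal{A}(v,m')=0,
\]
where the middle equality is the invariance of $\mathcal{A}$ under $T$ and the last holds since $v\in M^{\dagger}$ and $m'\in M$. As $m\in M$ was arbitrary, $Tv\in M^{\dagger}$. Applying this same argument verbatim to $T^{-1}$ (legitimate by the first paragraph) gives $T^{-1}(M^{\dagger})\subseteq M^{\dagger}$.

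Finally, I would deduce that $T\restriction_{M^{\dagger}}$ is a bijection onto $M^{\dagger}$: it is injective since $T$ is injective on the whole space, and it is surjective because $M^{\dagger}=T\bigl(T^{-1}(M^{\dagger})\bigr)\subseteq T(M^{\dagger})\subseteq M^{\dagger}$. I do not expect a genuine obstacle here; the only point requiring care is that $M$ is not assumed to be closed or $\mathcal{A}$-non-degenerate, so no orthogonal-splitting theorem is available and the argument must stay purely formal, using nothing beyond $T\in G$ and $TM=M$. This is the infinite-dimensional analogue of \cite[Lemma 2.13]{JC}.
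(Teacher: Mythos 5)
Your proposal is correct and takes essentially the same route as the paper: the key computation $\mathcal{A}(Tv,m)=\mathcal{A}(Tv,Tm')=\mathcal{A}(v,m')=0$ is exactly the paper's step (there written as $\mathcal{A}(T(u),v)=\mathcal{A}(u,T^{-1}(v))$), and both conclude by running the same argument for $T^{-1}$ to get surjectivity onto $M^{\dagger}$. Your write-up is merely more explicit about why $T^{-1}$ preserves $M$ and about the final bijectivity deduction, which the paper leaves to the reader.
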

  \begin{proof}
 For $u \in M^{\dagger}$ and $v \in M$,
 \begin{align*}
   \mathcal{A}\left(T(u),v\right)&=\mathcal{A}\left(T^{-1}\left(T(u)\right),T^{-1}(v)\right)\\
   &=\mathcal{A}\left(u,T^{-1}(v)\right)\\
   &=0
    \end{align*}
    as the isometry $T : M \longrightarrow M$ is a bijection. Using the similar logic, we can prove that $M^{\dagger}$ is invariant under $T^{-1}$. This proves the result.
       \end{proof}
\begin{proposition}\cite[pp. 20, eq. (21)]{HG}\label{direct sum}
    If $M$ is a non-degenerate, finite dimensional subspace of $H \oplus \mathbb{C}$, then $H\oplus \mathbb{C}=M \oplus M^{\dagger}$.
\end{proposition}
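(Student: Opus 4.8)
The plan is to reduce the statement to a finite-dimensional codimension count, the key point being that the Hermitian form $\mathcal{A}$ is implemented by the bounded invertible operator $A'$ of (\ref{A'}). Recall from there that $\mathcal{A}(\mathbb{u},\mathbb{v})=\left<A'\mathbb{u},\mathbb{v}\right>$ for all $\mathbb{u},\mathbb{v}\in H\oplus\mathbb{C}$, where $A'=\mathrm{diag}(I,-1)$ is bounded, self-adjoint and satisfies $(A')^{2}=I$; in particular $A'$ is a linear homeomorphism of $H\oplus\mathbb{C}$. The hypothesis that $M$ is non-degenerate means precisely that no nonzero vector of $M$ is $\mathcal{A}$-orthogonal to the whole of $M$, i.e. $M\cap M^{\dagger}=\{0\}$; this already guarantees that the sum $M+M^{\dagger}$, once shown to exhaust $H\oplus\mathbb{C}$, is direct.

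First I would pin down the codimension of $M^{\dagger}$. Choosing a basis $e_1,\dots,e_n$ of $M$, consider the bounded linear map $\Phi\colon H\oplus\mathbb{C}\to\mathbb{C}^{n}$ given by $\Phi(\mathbb{v})=\big(\mathcal{A}(\mathbb{v},e_1),\dots,\mathcal{A}(\mathbb{v},e_n)\big)=\big(\left<\mathbb{v},A'e_1\right>,\dots,\left<\mathbb{v},A'e_n\right>\big)$. By definition $M^{\dagger}=\ker\Phi$, so $M^{\dagger}$ is a closed subspace. Since $A'$ is invertible, the vectors $A'e_1,\dots,A'e_n$ are linearly independent, whence $\Phi$ is surjective; therefore $M^{\dagger}$ has codimension exactly $n=\dim M$ in $H\oplus\mathbb{C}$. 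Equivalently, $M^{\dagger}=(A'M)^{\perp}$ with $A'M$ an $n$-dimensional, hence closed, subspace.

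Then I would conclude by a quotient argument. Let $q\colon H\oplus\mathbb{C}\to(H\oplus\mathbb{C})/M^{\dagger}$ be the quotient map onto an $n$-dimensional space. The restriction $q\restriction_{M}$ has kernel $M\cap M^{\dagger}=\{0\}$, so it is an injective linear map between two spaces of dimension $n$, hence an isomorphism. Consequently, given any $\mathbb{v}\in H\oplus\mathbb{C}$ we may choose $m\in M$ with $q(m)=q(\mathbb{v})$, so that $\mathbb{v}-m\in M^{\dagger}$ and $\mathbb{v}=m+(\mathbb{v}-m)\in M+M^{\dagger}$. Thus $M+M^{\dagger}=H\oplus\mathbb{C}$, and the sum is direct by non-degeneracy, which is the assertion.

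There is no serious obstacle here; the only delicate point, and the only place the finite-dimensionality of $M$ genuinely enters, is the claim that $M^{\dagger}$ is a complemented subspace of codimension exactly $\dim M$. This relies on $A'$ being a bounded isomorphism, so that $A'M$ is again $n$-dimensional and $(A'M)^{\perp}$ has the expected codimension; in infinite dimensions the analogous $M+M^{\dagger}$ may fail to be all of $H\oplus\mathbb{C}$, so one should invoke finite-dimensionality explicitly at this step.
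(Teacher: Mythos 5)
Your argument is correct. Note, however, that the paper itself does not prove this proposition at all: it is quoted from Gross's book (\cite[pp.~20, eq.~(21)]{HG}), so there is no in-paper proof to measure you against, and what you have produced is a self-contained verification of the cited fact. Your route is the standard one and it is complete: since $\mathcal{A}(\mathbb{u},\mathbb{v})=\left<A'\mathbb{u},\mathbb{v}\right>$ with $A'$ bounded, self-adjoint and involutive, the complement $M^{\dagger}$ is exactly $(A'M)^{\perp}=\ker\Phi$ for your map $\Phi$, the linear independence of $A'e_1,\dots,A'e_n$ makes $\Phi$ surjective (invertible Gram matrix), so $M^{\dagger}$ is closed of codimension $n=\dim M$, and the quotient argument together with $M\cap M^{\dagger}=\{0\}$ (which is precisely non-degeneracy of $\mathcal{A}\restriction_{M}$) yields $H\oplus\mathbb{C}=M\oplus M^{\dagger}$. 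You also correctly flag where finite-dimensionality is genuinely used: for an infinite-dimensional (even closed, non-degenerate) subspace the codimension count is unavailable and $M+M^{\dagger}$ can fail to be everything, which is why the paper restricts the statement to finite-dimensional $M$ (it only ever applies it to one-, two- or three-dimensional subspaces). No gaps.
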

  The next lemma asserts the validity of \cite[Lemma 3.10]{JP} in infinite dimensional set up. It talks about orthogonality of time-like and light-like vectors.
\begin{lemma} \label{parker}
 Let $(x, z),\, (y, w) \in (H \oplus  \mathbb{C}) \setminus \{0\}$ such that $\mathcal{Q}(x, z) \leq 0$ and $\mathcal{Q}(y, w) \leq 0$. Then either $(x, z)=k(y, w)$ for some $k \in \mathbb{C}$ or $\mathcal{A}\left((x, z),(y, w)\right) \neq 0$.
 \end{lemma}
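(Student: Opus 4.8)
The plan is to reduce the statement to the familiar finite-dimensional computation by working inside the two-dimensional subspace spanned by the given vectors. Write $\mathbb{x}=(x,z)$ and $\mathbb{y}=(y,w)$, assume $\mathbb{x}\neq k\mathbb{y}$ for every $k\in\mathbb{C}$, and suppose toward a contradiction that $\mathcal{A}(\mathbb{x},\mathbb{y})=0$. First I would pass to $W=\mathrm{span}\{\mathbb{x},\mathbb{y}\}$, which is a genuine two-dimensional complex subspace by the independence assumption, and restrict $\mathcal{A}$ to $W$. The Gram matrix of $\mathcal{A}\restriction_W$ in the basis $\{\mathbb{x},\mathbb{y}\}$ is
\[
\begin{pmatrix} \mathcal{Q}(\mathbb{x}) & \mathcal{A}(\mathbb{x},\mathbb{y}) \\ \overline{\mathcal{A}(\mathbb{x},\mathbb{y})} & \mathcal{Q}(\mathbb{y}) \end{pmatrix}
=\begin{pmatrix} \mathcal{Q}(\mathbb{x}) & 0 \\ 0 & \mathcal{Q}(\mathbb{y}) \end{pmatrix},
\]
using the assumed vanishing of $\mathcal{A}(\mathbb{x},\mathbb{y})$. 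Since $\mathcal{Q}(\mathbb{x})\leq 0$ and $\mathcal{Q}(\mathbb{y})\leq 0$, the form $\mathcal{A}\restriction_W$ is negative semidefinite on $W$.

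The key point is then an \emph{index} obstruction: the form $\mathcal{A}$ on $H\oplus\mathbb{C}$ has exactly one negative direction, i.e. its negative inertia index is $1$. Concretely, for any subspace $V$ on which $\mathcal{A}$ is negative definite one has $\dim V\leq 1$, because $\mathcal{A}((x,z),(x,z))=\|x\|^2-|z|^2$ and the projection $(x,z)\mapsto z$ onto the last coordinate is injective on any such $V$ (if $z=0$ then $\mathcal{A}((x,0),(x,0))=\|x\|^2\geq 0$, forcing $x=0$). So the next step is to show that $\mathcal{A}\restriction_W$ is actually negative \emph{definite}, which will contradict $\dim W=2$. Here I would split into cases according to whether $\mathcal{Q}(\mathbb{x})$ and $\mathcal{Q}(\mathbb{y})$ are strictly negative or zero. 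If both are strictly negative, the diagonal Gram matrix above is already negative definite and we are done. If, say, $\mathcal{Q}(\mathbb{x})=0$ (so $\mathbb{x}$ is light-like) then $\mathbb{x}$ lies in the radical of $\mathcal{A}\restriction_W$ (it is $\mathcal{A}$-orthogonal to both $\mathbb{x}$ and $\mathbb{y}$), so $W$ contains an $\mathcal{A}$-isotropic vector in its radical; but then $\mathrm{span}\{\mathbb{x}\}\oplus(\text{one negative direction})$ would need to be accommodated, and more importantly one checks directly that a light-like vector $\mathbb{x}=(x,z)$ with $\|x\|^2=|z|^2$ has $z\neq 0$ (else $x=0$ too), so after scaling $\mathbb{x}=(x,1)$ with $\|x\|=1$, i.e. $x\in\partial B$; then $\mathcal{A}(\mathbb{x},\mathbb{y})=\langle x,y\rangle-\overline{w}=0$ together with $\mathcal{Q}(\mathbb{y})=\|y\|^2-|w|^2\leq 0$ forces (Cauchy–Schwarz) $|w|=|\langle x,y\rangle|\leq\|x\|\|y\|=\|y\|\leq|w|$, so equality holds throughout in Cauchy–Schwarz, whence $y$ is a scalar multiple of $x$ and $\|y\|=|w|$; tracing the scalars back shows $\mathbb{y}=k\mathbb{x}$, contradicting independence.

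I expect the main obstacle to be the light-like boundary case just sketched: the equality case of Cauchy–Schwarz must be handled carefully to conclude that $\mathbb{y}$ is genuinely proportional to $\mathbb{x}$ as a vector in $H\oplus\mathbb{C}$ (not merely that the $H$-components are proportional), and one must rule out the degenerate sub-case $z=0$ (equivalently $w=0$) at the outset. The strictly-time-like case is immediate once the index-$1$ fact is in hand. An alternative, perhaps cleaner, route that avoids casework is to invoke Proposition~\ref{direct sum} or a direct reduction: if $\mathcal{Q}(\mathbb{x})<0$, then $\langle\mathbb{x}\rangle$ is a non-degenerate one-dimensional subspace, so $H\oplus\mathbb{C}=\langle\mathbb{x}\rangle\oplus\langle\mathbb{x}\rangle^{\dagger}$ with $\mathcal{A}$ positive definite (in fact a genuine inner product) on $\langle\mathbb{x}\rangle^{\dagger}$; writing $\mathbb{y}=c\mathbb{x}+\mathbb{y}'$ with $\mathbb{y}'\in\langle\mathbb{x}\rangle^{\dagger}$, the hypothesis $\mathcal{A}(\mathbb{x},\mathbb{y})=0$ gives $c=0$, so $\mathbb{y}=\mathbb{y}'$ with $\mathcal{Q}(\mathbb{y})=\mathcal{A}(\mathbb{y}',\mathbb{y}')\geq 0$; combined with $\mathcal{Q}(\mathbb{y})\leq 0$ this yields $\mathbb{y}'=0$, i.e. $\mathbb{y}=0$, a contradiction. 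The remaining case $\mathcal{Q}(\mathbb{x})=\mathcal{Q}(\mathbb{y})=0$ with $\mathcal{A}(\mathbb{x},\mathbb{y})=0$ is then the only genuinely new infinite-dimensional point, and it is dispatched by the normalization $\mathbb{x}=(x,1)$, $x\in\partial B$, and the Cauchy–Schwarz equality argument above. I would present the second route, as it isolates a single hard case and keeps the finite-dimensional reasoning of \cite[Lemma 3.10]{JP} intact.
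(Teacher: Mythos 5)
Your proposal is correct, but it takes a genuinely different route from the paper. The paper's proof is a single uniform computation with no case split, no inertia argument and no orthogonal decomposition: assuming $\left<x,y\right>=z\overline{w}$ (and noting $z,w\neq 0$), it shows for every $k\in\mathbb{C}$ that $|z-kw|^2\geq\|x-ky\|^2$, by expanding $|z-kw|^2$, substituting $z\overline{w}=\left<x,y\right>$ into the cross terms and using $\|x\|\leq|z|$, $\|ky\|\leq|kw|$; choosing $k=z/w$ kills the left-hand side, so $x=ky$ and hence $(x,z)=k(y,w)$. This handles the time-like and light-like cases at once and is constructive (it exhibits the scalar $k=z/w$). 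Your preferred route instead splits cases: when one vector is strictly time-like you use Proposition \ref{direct sum} and positive definiteness of $\mathcal{Q}$ on $\left<(x,z)\right>^{\dagger}$ to force $(y,w)=0$, and in the doubly light-like case you run the equality case of Cauchy--Schwarz after normalizing to $(x,1)$ with $x\in\partial B$; both steps are sound, and the scalar-tracing you flag does go through. One caution: the positive definiteness of $\mathcal{Q}$ on $\left<(x,z)\right>^{\dagger}$ for a time-like vector is deduced in the paper (proofs of Theorems \ref{e} and \ref{t1}) \emph{from} Lemma \ref{parker}, so you cannot cite it there; you must prove it directly, which is easy (normalize to $(x,1)$ with $\|x\|<1$; if $\mathcal{A}\left((y,w),(x,1)\right)=0$ then $w=\left<y,x\right>$, so $\mathcal{Q}(y,w)\geq\|y\|^2\left(1-\|x\|^2\right)\geq 0$, with equality only for $y=0$, $w=0$) --- but note this direct proof is itself just a Cauchy--Schwarz estimate, so the decomposition machinery buys conceptual clarity (the index-one picture, isolating the boundary case) rather than economy compared with the paper's one-line trick.
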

\begin{proof}
  We have $\|x\|^2\,\, \leq |z|^2$ and $\|y\|^2\,\, \leq |w|^2$, $z,\,w \neq 0$. Suppose that \linebreak $ \mathcal{A} \left((x,z),(y,w)\right) = 0$, i.e. $\left<x,y\right>=z \overline{w}$. We will show that $(x,z)$ and $(y,w)$ are linearly dependent.
For arbitrary $k \in \mathbb{C}$,

\begin{eqnarray*}
\left| z-kw\right|^2 &=& (z-kw)(\overline{z}-\overline{kw})\\
&=&  |z|^2-z \overline{k w}-k w\overline{z}+\lvert kw \rvert^2\\
&=& |z|^2-\overline{k}\left<x, y\right>-k\left<y,x\right>+\lvert kw \rvert^2 \\
&\geq& \|x\|^2-\overline{k}\left<x, y\right>-k\left<y,x\right>+\|ky\|^2\\
&=& \left<x-k y, x-k y \right>\\
&=& ||x-k y||^2.
\end{eqnarray*}
Choosing $k=\dfrac{z}{w}$ makes left hand side  of the above inequality  zero and hence right hand side is also $0$ which gives that $x=ky$, i.e.  $(x,z)=(ky,z)=\left((z/w)y,z\right)=(z/w)(y,w)=k(y,w)$.
  \end{proof}
  \begin{theorem}\cite[Corollary 3 - Witt's  Theorem, Section 2, Chapter XV]{HG} \label{wt}
     Let    $(E, \psi)$ be a complex vector space with a Hermitian form $\psi$ and $F$ and $G$ be two finite dimensional subspaces of $E$. If $D:F \longrightarrow G$ is an onto linear isometry, then $D$ can be extended to a surjective linear isometry  of $E$ (see the paragraph below Theorem 2 on page 380 in \cite{HG}).
\end{theorem}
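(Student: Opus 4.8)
The statement is the Witt extension theorem for Hermitian forms, in the form that persists when the ambient space $E$ is infinite dimensional (and $\psi$ possibly degenerate). The plan is to peel $F$ off one dimension at a time, so that the whole theorem reduces to a single ``one--step'' extension lemma, which is then settled with explicit unitary maps. Write $F=F_0\oplus\langle v\rangle$ with $\dim F_0=\dim F-1$. Arguing by induction on $\dim F=\dim G$ (the case $\dim F=0$ being handled by $\mathrm{id}_E$), the isometry $D\restriction_{F_0}\colon F_0\to D(F_0)$ extends to a surjective linear isometry $\widetilde{D}_0$ of $E$; replacing $D$ by $\widetilde{D}_0^{-1}\circ D$ we may assume $D$ fixes $F_0$ pointwise. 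Putting $v'=D(v)$, the isometry identities give $\psi(v,v)=\psi(v',v')$ and $\psi(v-v',f)=0$ for all $f\in F_0$, i.e. $v-v'\in F_0^{\dagger}$; also $v\notin F_0$ forces $v'=\widetilde{D}_0^{-1}(D(v))\notin F_0$. So it suffices to prove the following: \emph{if $W\subseteq E$ is a subspace, $v,v'\in E\setminus W$ satisfy $\psi(v,v)=\psi(v',v')$ and $v-v'\in W^{\dagger}$, then some surjective linear isometry of $E$ fixes $W$ pointwise and sends $v$ to $v'$.} Composing this map, applied with $W=F_0$, with $\widetilde{D}_0$ produces the required extension of $D$.

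For the one--step lemma set $w=v-v'$; if $w=0$ take the identity, otherwise split on $\psi(w,w)$. If $\psi(w,w)\neq 0$, use the Hermitian pseudo--reflection
\[
\tau_{w,\zeta}(x)=x-(1-\zeta)\,\frac{\psi(x,w)}{\psi(w,w)}\,w,\qquad |\zeta|=1 .
\]
Using $\psi(w,w)\in\mathbb{R}$ one checks that $\tau_{w,\zeta}$ is a bijective isometry of $E$ fixing $w^{\perp}$, and $w\in W^{\dagger}$ gives $W\subseteq w^{\perp}$; moreover the equation $1-\zeta=\psi(w,w)/\psi(v,w)$ has a unimodular solution, because $\psi(v,v)=\psi(v',v')$ makes $1-\psi(w,w)/\psi(v,w)$ unimodular (and $\psi(v,w)\neq 0$ since $\psi(w,w)\neq 0$), and for that $\zeta$ one gets $\tau_{w,\zeta}(v)=v'$. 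If instead $w$ lies in the radical of $\psi$ on $E$, take a rank--one transvection $x\mapsto x-\phi(x)$ with $\phi\colon E\to\langle w\rangle$ linear, $\phi\restriction_{W}=0$ and $\phi(v)=w$: this is automatically an isometry (the form ignores $\langle w\rangle$), it can be chosen invertible because $v\notin W$, and it sends $v$ to $v'$.

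The remaining case --- $w$ isotropic and nonzero --- is the genuine obstacle. The idea is to interpose a vector $v''$ with $\psi(v'',v'')=\psi(v,v)$, lying in the coset $v+W^{\dagger}=v'+W^{\dagger}$, and chosen so that \emph{both} $v-v''$ and $v''-v'$ are anisotropic; two applications of the case already treated (first with $W$, then with $W$ enlarged by $v''$ inside the appropriate perp) then finish. Producing such a $v''$ amounts to the assertion that the isotropic line $\langle w\rangle$ embeds into a hyperbolic plane inside $E$ that is orthogonal to $W$ --- that $E$ has ``enough room''. When $\psi$ is non-degenerate (as for $\mathcal{A}$ on $H\oplus\mathbb{C}$) and $W$ is finite dimensional this is precisely the sort of Witt-cancellation / existence-of-hyperbolic-complement fact established in \cite[Ch.~XV]{HG}, and in the degenerate or genuinely infinite-dimensional generality of the theorem it is cleanest to quote it from there; this is the step I expect to be the crux. (Alternatively, when $F$ is non-degenerate one may invoke Proposition \ref{direct sum} to write $E=F\oplus F^{\dagger}=G\oplus G^{\dagger}$ and extend $D$ by any isometry $F^{\dagger}\to G^{\dagger}$, but the existence of such an isometry is again a Witt-cancellation statement with the same infinite-dimensional core.)
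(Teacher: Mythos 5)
The paper offers no internal proof of this statement at all: it is imported verbatim from Gross \cite{HG} (Corollary 3, Section 2, Chapter XV), so there is nothing in the paper to compare your argument against except the citation itself. Judged on its own terms, your reduction to the one-step extension lemma is sound, and the two cases you actually settle are correct: for anisotropic $w=v-v'$ the identities $\psi(v',w)=-\overline{\psi(v,w)}$ and $\psi(w,w)=2\,\mathrm{Re}\,\psi(v,w)\neq 0$ do make $\zeta=\psi(v',w)/\psi(v,w)$ unimodular and the pseudo-reflection works, and the rank-one transvection handles $w$ in the radical. But the case you yourself flag as the crux --- $w$ isotropic, nonzero, not in the radical --- is not proved; it is deferred to the very reference from which the theorem is being quoted. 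As a self-contained proof the proposal is therefore incomplete, and as written it is circular.

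Moreover, that gap cannot be closed in the generality in which the statement is phrased (arbitrary, possibly degenerate Hermitian form): the extension theorem is then simply false. Take $E=\mathbb{C}^3$ with basis $e_1,e_2,e_3$, $\psi(e_1,e_2)=\psi(e_2,e_1)=1$ and all other products zero, so $e_3$ spans the radical; let $F=\left<e_1\right>$, $G=\left<e_3\right>$, $D(e_1)=e_3$. This is an onto isometry of one-dimensional subspaces, yet any surjective isometry $T$ of $E$ maps the radical into itself, so $T(e_1)=e_3$ together with $T(e_3)\in\left<e_3\right>$ would confine the image of $T$ to $\left<e_3\right>+\left<T(e_2)\right>$, contradicting surjectivity. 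Note that $v-v'=e_1-e_3$ is exactly of the type your missing case must treat, so no interposition of a $v''$ can succeed there. To finish your argument you must invoke a nondegeneracy (or radical-compatibility) hypothesis --- present in Gross's actual Corollary 3 and satisfied in all of the paper's applications, where $\psi$ is $\mathcal{A}$ or $\widehat{\mathcal{A}}$ on $H\oplus\mathbb{C}$ --- under which the hyperbolic-plane construction you sketch (or the appeal to Proposition \ref{direct sum}) does go through; without such a hypothesis the statement itself, not just your proof, fails.
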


 \section{Conjugacy classes for elliptic and hyperbolic isometries}
 Recall that for a subspace $M$ of $H \oplus \mathbb{C}$, $M^{\dagger}$ denotes the orthogonal complement of $M$ with respect to the Hermitian form $\mathcal{A}$.

\textbf{Proof of Theorem \ref{e}.} Let $h \in Aut(B)$ be the isometry corresponding to $T$ (cf. Theorem \ref{a}) such that  $h$ fixes $b \in B$.\\
 (1)  Using transitivity of elements of $Aut(B)$, there exists an isometry $m \in Aut(B)$ such that $m(0)=b$. Then $m^{-1}h\,m(0)=0$ thereby making $m^{-1}h\,m$  unitary using Lemma \ref{lemma 1} and Lemma \ref{lemma 2}.  Hence $T$ is unitary upto conjugacy.\\
 (2) Let $S$ be the linearization of $m$ and  $S^{-1}TS=\widetilde{U}$ where $\widetilde{U}$ is a unitary operator. As conjugacy preserves spectrum, $\sigma(T)=\sigma(\widetilde{U}) \subseteq S^1$.\\
 (3) Since $b \in B$, $(b,1)$ is a time-like eigenvector of $T$ (cf. Remark \ref{R1}). Conversely, if $(x,z)$ is a time-like eigenvector of an isometry $T' \in Aut(B)$, then $z \neq 0$ as $\|x\| < |z|$. Thus $(z^{-1}x,1)$ is a time-like eigenvector of $T'$ where $\lvert z^{-1}x \rvert <1$ thereby making $T'$ an elliptic isometry using Remark \ref{R1}.\\
 (4) Without loss of generality, a time-like vector is of the form $(x,1)$ where $x \in B$. Using Proposition \ref{direct sum}, $H \oplus \, \mathbb{C}=\left<(x,1)\right> \oplus \, \left<(x,1)\right>^{\dagger}$. By Lemma \ref{b}, $T$ maps ${\left<(x,1)\right>^{\dagger}}$ to ${\left<(x,1)\right>^{\dagger}}$ bijectively. Thus $T=T_1 \oplus T_2$ where $T_1=T\restriction_{\left<(x,1)\right>}$ and $T_2=T\restriction_{\left<(x,1)\right>^{\dagger}}$. Also $\mathcal{Q} \restriction_{\left<(x,1)\right>^{\dagger}}$ is positive definite by Lemma \ref{parker} making $\left<(x,1)\right>^{\dagger}$ an inner product space. We can construct a linear isometry taking $\left<(x,1)\right>$ to $\mathbb{C}$. By Witt's theorem (Theorem \ref{wt}), there exists a linear isometry from $\left<(x,1)\right>^{\dagger}$ onto $\mathbb{C} ^{\dagger}=H$. This makes $\left<(x,1)\right>^{\dagger}$  complete as $H$ is so. Hence $T_2$ is a unitary operator. \qed
\subsection{Proof of Theorem \ref{t1}}
 The following result characterizes non-unitary normal isometries  on the basis of fixed points.
 \begin{lemma} \label{g}
 A non-unitary isometry of $G$ is  normal if and only if it has two  eigenvectors of the type $(\pm x_0,1)$. Moreover  the projections $\pm x_0$ lie on  $\partial{B}$ (cf. Remark \ref{R1}).
 \end{lemma}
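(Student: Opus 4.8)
The plan is to prove the two implications separately, working with the explicit form of $T$ from Proposition~\ref{p}, say $T=e^{i\theta}\left[\begin{array}{cc} UA & U(\xi)\\ \langle\boldsymbol{\cdot},\xi\rangle & a\end{array}\right]$, and invoking the structure theorem for normal isometries (Theorem~\ref{f}). The first observation I would record is that, by Theorem~\ref{f}(2), a non-unitary $T\in G$ is exactly one with $\xi\neq 0$, so that $a=\sqrt{1+\|\xi\|^2}>1$; it is this strict inequality that powers the whole argument.

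For the forward implication there is essentially nothing to prove: if $T$ is non-unitary normal then Theorem~\ref{f}(1) gives $U(\xi)=\xi$ and Theorem~\ref{f}(3) exhibits $(\pm\xi/\|\xi\|,1)$ as eigenvectors of $T$. Setting $x_0=\xi/\|\xi\|$ we obtain a pair of eigenvectors $(\pm x_0,1)$ with $\|x_0\|=1$, so $x_0\in\partial B$, and by Remark~\ref{R1} the points $\pm x_0$ are fixed points on $\partial B$ of the induced isometry of $B$.

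The converse carries the content. Assume $T$ is non-unitary with $T(x_0,1)=\mu_1(x_0,1)$ and $T(-x_0,1)=\mu_2(-x_0,1)$. One first notes $x_0\neq 0$, since $x_0=0$ would force $U(\xi)=0$, i.e.\ $\xi=0$. Expanding the two eigen-equations into their $H$- and $\mathbb{C}$-components, the scalar parts give $\mu_1+\mu_2=2ae^{i\theta}$ and $\mu_1-\mu_2=2e^{i\theta}\langle x_0,\xi\rangle$, and adding and subtracting the vector parts yields
\[
UA(x_0)=a\,x_0,\qquad U(\xi)=\langle x_0,\xi\rangle\,x_0.
\]
The first identity is the crucial one. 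Since $U$ is unitary it gives $\|A(x_0)\|=a\|x_0\|$; writing $x_0=c\xi+\eta$ with $\eta\in\langle\xi\rangle^{\perp}$ and using $A(\xi)=a\xi$ and $A\restriction_{\langle\xi\rangle^{\perp}}=I$, comparison of $\|A(x_0)\|^2$ with $a^2\|x_0\|^2$ reduces to $(a^2-1)\|\eta\|^2=0$, hence $\eta=0$ because $a>1$. So $x_0=c\xi$ with $c\neq 0$; then $A(x_0)=a\,x_0$, the first identity collapses to $U(x_0)=x_0$, i.e.\ $U(\xi)=\xi$, and $T$ is normal by Theorem~\ref{f}(1). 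Finally, substituting $U(\xi)=\xi$ and $x_0=c\xi$ into the second identity gives $c\,\langle c\xi,\xi\rangle=1$, and taking moduli yields $|c|^2\|\xi\|^2=1$, hence $\|x_0\|=|c|\,\|\xi\|=1$ and $x_0\in\partial B$.

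I expect the only step calling for care to be the coordinate bookkeeping that extracts the two displayed identities from the eigen-equations, together with the point — easy to overlook — that ``non-unitary'' means precisely $\xi\neq 0$, so that $a>1$: this is exactly what lets the structure of $A$ force $x_0$ to lie along $\xi$ and then deliver both $U(\xi)=\xi$ and $\|x_0\|=1$. Everything else is routine.
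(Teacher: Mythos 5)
Your proof is correct and follows essentially the paper's own route: expand the two eigen-equations for $(\pm x_0,1)$ to obtain the identities $U(\xi)=\left<x_0,\xi\right>x_0$ and $UA(x_0)=a\,x_0$, then conclude normality via Theorem \ref{f}(1) and $\|x_0\|=1$ from the first identity. The only (harmless) divergence is the finish: you force $x_0\in\left<\xi\right>$ by comparing $\|A(x_0)\|$ with $a\|x_0\|$ under the decomposition $x_0=c\xi+\eta$ and using $a>1$, whereas the paper combines the two identities directly with $A(\xi)=a\xi$ and the invertibility of $A$ and $U$; both are valid.
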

 \begin{proof}
Forward part follows from Theorem \ref{f} (3).  For the other way, let $T =\left[ {\begin{array}{cc}
   UA & U(\xi) \\
   \left<\boldsymbol{\cdot}, \xi\right> & a
  \end{array} } \right]$, $\xi \neq 0$, be such that $T(\pm x_0,1)=\lambda_i(\pm x_0,1)$, $i=1,\, 2$ for some $\lambda_1,\,\lambda_2 \in \mathbb{C}$. This gives that $ \pm UA(x_0) +U(\xi)=\pm \lambda_i x_0$ and $\pm \left<x_0,\xi\right>+a=\lambda_i$. Substituting the value of $\lambda_i$ from one equation into the other, we get  $\pm UA(x_0) +U(\xi)=\pm \left(\pm \left<x_0,\xi\right> + a\right)x_0$, i.e.
  \begin{align}
  UA(x_0)+U(\xi)&=\left<x_0,\xi\right>x_0+ax_0\,\,\,\,\text{and} \label{b1}\\
   -UA(x_0)+U(\xi)&=\left<x_0,\xi\right>x_0-ax_0 \label{b2}.
  \end{align}
  Observe that the fact that $T$ is non-unitary forces $x_0$ to be non-zero. On solving (\ref{b1}) and (\ref{b2}) we get $U(\xi)=\left<x_0,\xi\right>x_0$  and $UA(x_0)=ax_0$. Since $\left<x_0,\xi\right>\neq 0$, we further have $U^{-1}(x_0)=\dfrac{\xi}{\left<x_0,\xi\right>}=\dfrac{A(x_0)}{a}$. As $A(\xi)=a\xi$ (cf. Proposition \ref{p}),   $\xi=aA^{-1}(\xi)=\left<x_0,\xi\right>x_0=U(\xi)$ thereby making $T$ a normal isometry (cf. Theorem \ref{f} (1)). Now as $\left<x_0,\xi\right>x_0=\xi$, $\|\xi\|^2=\left<\xi,\xi\right>=\left<\left<x_0,\xi\right>x_0,\xi\right>={\left(\left<x_0,\xi\right>\right)}^2$. Hence $\left<x_0,\xi\right>=\pm \|\xi\|$ and $x_0=\pm \dfrac{\xi}{\|\xi\|} \in \partial{B}$.
\end{proof}

 \begin{lemma} \label{bitransitive}
 $Aut(B)$ acts bi-transitively on $\partial B$.
 \begin{proof}
 The broad approach of the proof is based on  \cite[Proposition 2.1.3]{CG}.
 Let $x_1,\, y_1,\, x_2, y_2 \in \partial B$ be such that $x_1 \neq x_2$ and $y_1 \neq y_2$. We have to show the existence of an element $m \in Aut(B)$ satisfying $m(x_1)=y_1$ and $m(x_2)=y_2$. Clearly the elements $(x_i,1),\, (y_i,1),\,\,i=1,2$ are light-like vectors. Let $W=span\,\{(x_1,1),(x_2,1)\}$. Define a linear map $T:W \longrightarrow H \oplus \, \mathbb{C}$ as
 \[T(x_1,1)=\mu(y_1,1)\]
 \[T(x_2,1)=(y_2,1)\]
 where $\mu=\dfrac{\mathcal{A}\Big((x_1,1),(x_2,1) \Big)}{\mathcal{A}\Big((y_1,1),(y_2,1) \Big)}$ which makes sense by Lemma \ref{parker}. We will see that  with this choice of $\mu$, $T$ becomes an isometry. As $H \oplus \mathbb{C}$ is a complex Hilbert space, it is enough to show that $\mathcal{Q}(T(\mathbb{x})) =\mathcal{Q}(\mathbb{x})$ for every $\mathbb{x} \in W$.  Further as $(x_1,1)$ and $(x_2,1)$ are light-like vectors,  it suffices to  show that
 $\mathcal{A}\Big(T(x_1,1),T(x_2,1)\Big)=\mathcal{A}((x_1,1),(x_2,1))$. Consider
 \begin{align*}
 \mathcal{A}\Big(T(x_1,1),T(x_2,1)\Big)
& =\mathcal{A}(\mu(y_1,1),(y_2,1))\\
 &=\mu\mathcal{A}((y_1,1),(y_2,1))\\
&=\mathcal{A}((x_1,1),(x_2,1)).
 \end{align*}
 Using Witt's theorem  (Theorem \ref{wt}), $T$ can be extended to an isometry $\widetilde{T}$ of $H \oplus \, \mathbb{C}$.  So, we have an element $\widetilde{T} \in G$. Thus it is of the form some $e^{i \theta} \left[ {\begin{array}{cc}
   UA & U(\xi) \\
   \left<\boldsymbol{\cdot},\xi\right> & a \\
  \end{array} } \right]$ such that $e^{i \theta} \left[ {\begin{array}{cc}
   UA & U(\xi) \\
   \left<\boldsymbol{\cdot},\xi\right> & a \\
  \end{array} } \right] \left[ {\begin{array}{c}
   x_i \\
   1 \\
  \end{array} } \right]=\mu_i\left[ {\begin{array}{c}
   y_i \\
   1 \\
  \end{array} } \right]$ where $\mu_1=\mu$ and $\mu_2=1$. This  gives that  $e^{i \theta}\Big(UA(x_i)+U(\xi)\Big)=\mu_i y_i$ and $e^{i \theta}\Big(\left<x_i,\xi\right>+a\Big)=\mu_i$ which implies that $UA(x_i)+U(\xi)=\Big(\left<x_i,\xi\right>+a\Big)y_i$.  We have $x \mapsto \dfrac{UA(x)+U(\xi)}{\left<x,\xi\right>+a}=m(x)$ say,  is the corresponding isometry in $Aut(B)$ satisfying $m(x_i)=y_i,\,\,i=1,2.$
  \end{proof}
 \end{lemma}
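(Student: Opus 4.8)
The plan is to transfer the problem to the linear model $G$ using the projective action of Theorem \ref{a}: for $x,y\in\partial B$ and $T\in G$ of the form in Proposition \ref{p}, the induced automorphism $\widetilde T\in Aut(B)$ sends $x$ to $y$ exactly when $T(x,1)$ is a nonzero scalar multiple of $(y,1)$ (this is the computation behind Remark \ref{R1}, read off the formula $\widetilde T(x)=\frac{A(x)+\xi}{\langle x,A^*\xi/a\rangle+a}$). So, given distinct $x_1,x_2\in\partial B$ and distinct $y_1,y_2\in\partial B$, it suffices to construct $T\in G$ with $T(x_1,1)=\mu_1(y_1,1)$ and $T(x_2,1)=\mu_2(y_2,1)$ for some $\mu_1,\mu_2\in\mathbb{C}\setminus\{0\}$.

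First I would note that $(x_i,1)$ and $(y_i,1)$ are light-like since $\|x_i\|=\|y_i\|=1$, and that $W:=\mathrm{span}\{(x_1,1),(x_2,1)\}$ and $W':=\mathrm{span}\{(y_1,1),(y_2,1)\}$ are two-dimensional because $x_1\neq x_2$ and $y_1\neq y_2$. By Lemma \ref{parker}, non-proportional light-like pairs are not $\mathcal{A}$-orthogonal, so $\mathcal{A}((x_1,1),(x_2,1))\neq0$ and $\mathcal{A}((y_1,1),(y_2,1))\neq0$, and hence
\[
\mu:=\frac{\mathcal{A}\big((x_1,1),(x_2,1)\big)}{\mathcal{A}\big((y_1,1),(y_2,1)\big)}\in\mathbb{C}\setminus\{0\}
\]
is well defined. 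I would then define $T_0:W\to W'$ by $T_0(x_1,1)=\mu(y_1,1)$, $T_0(x_2,1)=(y_2,1)$, extended linearly; this is a linear bijection between two-dimensional spaces. To see it is an $\mathcal{A}$-isometry of $W$ onto $W'$, write a general element of $W$ as $\alpha(x_1,1)+\beta(x_2,1)$; since $\mathcal{Q}$ vanishes on both basis vectors, its $\mathcal{Q}$-value equals $2\,\mathrm{Re}\big(\alpha\overline\beta\,\mathcal{A}((x_1,1),(x_2,1))\big)$, so it is enough to check $\mathcal{A}(T_0(x_1,1),T_0(x_2,1))=\mathcal{A}((x_1,1),(x_2,1))$, which is immediate from the choice of $\mu$ and sesquilinearity. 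Then Witt's theorem (Theorem \ref{wt}), applicable because $W,W'$ are finite-dimensional and $\mathcal{A}$ is a Hermitian form on $H\oplus\mathbb{C}$, extends $T_0$ to a surjective linear $\mathcal{A}$-isometry $T$ of $H\oplus\mathbb{C}$, i.e. $T\in G$. The automorphism $m:=\widetilde T$ then satisfies $m(x_1)=y_1$ and $m(x_2)=y_2$, giving bi-transitivity.

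The one genuinely delicate point — the step I would be most careful about — is the scaling constant $\mu$: the naive map $(x_i,1)\mapsto(y_i,1)$ is generally \emph{not} an $\mathcal{A}$-isometry (the $\mathcal{A}$-pairing of the two light-like vectors is the obstruction, a cross-ratio-type invariant of the pair), and Witt's theorem only applies to an honest isometry of subspaces. Rescaling one image by $\mu$ matches these pairings exactly, and Lemma \ref{parker} is precisely what guarantees both pairings are nonzero so that $\mu$ makes sense. Everything else is routine: bijectivity of $T_0$ from $x_1\neq x_2$ and $y_1\neq y_2$, reduction of $\mathcal{Q}$-preservation on all of $W$ to a single inner-product identity, and finite-dimensionality for the Witt extension.
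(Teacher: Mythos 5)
Your proposal is correct and follows essentially the same route as the paper: the same rescaling constant $\mu$ defined as the ratio of the two $\mathcal{A}$-pairings (nonzero by Lemma \ref{parker}), reduction of the isometry check on $W$ to the single cross pairing, extension by Witt's theorem (Theorem \ref{wt}), and passage to the induced automorphism of $B$. The only difference is cosmetic — you spell out the $\mathcal{Q}$-value of a general element of $W$ explicitly, which the paper leaves implicit.
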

  \textbf{Proof of Theorem \ref{t1}.}  Let $h \in Aut(B)$ be the isometry corresponding to $T$ which fixes $y_1,\,y_2 \in \partial{B}$.\\
 (1)   Lemma \ref{bitransitive} says that  there exists $m \in Aut(B)$ such that for $\pm x \in \partial {B}$, $m(y_1)=x$ and $m(y_2)=-x$. This implies the isometry $m\,h\,m^{-1}$ fixes $\pm x$. It is non-unitary being conjugate to a hyperbolic isometry and hence its linearization is a   normal isometry by Remark \ref{R1} and Lemma \ref{g}. \\
 (2) Let $R$ be a linearization of $m\,h\,m^{-1}$. So $R$ is a non-unitary normal isometry. As conjugacy preserves spectrum, $\sigma(T)=\sigma(R)$. Rest follows  by part (3) of Theorem \ref{f}.\\
 (3) By Remark \ref{R1}, $(y_1,1)$ and $(y_2,1)$ are eigenvectors of $T$.\\ Let $M=span\,\{(y_1,1),(y_2,1)\}$. Then $M$ is non-degenerate. This gives that  $H \oplus \, \mathbb{C}=M \oplus \, M^{\dagger}$ using Proposition \ref{direct sum}. Also Lemma \ref{b} implies that $T$ maps  $M^{\dagger}$ to  $M^{\dagger}$ bijectively, hence   $T=T_1 \oplus \, T_2$ where  $T_1=T \restriction_{M}$ and $T_2=T\restriction_{M^{\dagger}}$. By Lemma \ref{parker}, $\mathcal{Q} \restriction_{M^{\dagger}}$ is positive definite making $M^{\dagger}$ an inner product space. For $x \in \partial{B}$, we can construct a linear isometry mapping $\left<(y_1,1)\right>$ to $\left<(x,1)\right>$ and $\left<(y_2,1)\right>$ to $\left<(-x,1)\right>$. Using Witt's theorem (Theorem \ref{wt}), there exists a linear isometry from $M^{\dagger}$ onto $(\left<x\right> \oplus \mathbb{C})^{\dagger}$. As $(\left<x\right> \oplus \mathbb{C})^{\dagger}=(\left<x\right> \oplus \mathbb{C})^{\perp}$ and $(\left<x\right> \oplus \mathbb{C})^{\perp}$ is complete, this makes $M^{\dagger}$ complete and hence   $T_2$ is a unitary operator.\qed
\section{Conjugacy classes for parabolic isometries}
 \subsection{Stabilizer of infinity}
 We will be following \cite{CG} in our approach and study parabolic isometries on Siegel domain model. \\
 Let $e \in \partial{B}$. We write $H \oplus \, \mathbb{C}=\left<e\right> \oplus \, \left<e\right>^{\perp} \oplus \, \mathbb{C}$ where $\left<e\right>^{\perp}$ has $\{e_j,\,\,j \in I\} $ as  orthonormal basis where $I$ is an indexing set. A  general element $(x,z) \in H \oplus \, \mathbb{C}$ has the representation $(x,z)=\left<x,e\right>(e,0)+(x',0)+z(0,1)$. \\
 In the following, we shall describe the Siegel domain model along with the isometries of it and then examine those isometries of the model which fix $\infty$.\\
 Consider the sesquilinear form
\begin{eqnarray}\label{eq:10}
\widehat{\mathcal{A}}\left((x,z),(y,w)\right)=-z\overline{\left<y,e\right>}-\left<x,e\right>\overline{w}+\left<x',y'\right>.
\end{eqnarray}
Observe that $\widehat{\mathcal{A}}\left((x,z),(y,w)\right)=\left<\widehat{A'}(x,z),(y,w)\right>$ where
\begin{eqnarray*}
\widehat{A'}(x,z)=(-ze+x',-\left<x,e\right>).
\end{eqnarray*}
Hence
\begin{eqnarray}\label{r}
\widehat{A'} =\left[\begin{array}{ccc}
0 & 0 & -1 \\
0 & I & 0\\
-1 & 0 & 0
\end{array}\right]
\end{eqnarray}
Let $\widehat{\mathcal{Q}}$ be the quadratic form determined by $\widehat{\mathcal{A}}$. So
\begin{eqnarray}\label{eq:11}
\widehat{\mathcal{Q}}(x,z)=-2\text{Re}\,(\overline{z}\left<x,e\right>)+\|x'\|^2.
\end{eqnarray}
If $\widehat{\mathcal{Q}}(x,z)<0$, i.e. $\text{Re}\,(\overline{z}\left<x,e\right>)>\dfrac{1}{2}\|x'\|^2$ then $z \neq 0$. Hence the projective space induced from the negative vectors is given by 
\[ \Sigma=\left\{x \in H \,\,:\,\,\text{Re}\,\left<x,e\right> >\dfrac{1}{2}\|x'\|^2\right\}.\]
 We call it the Siegel domain.
Consider a linear transformation $D$ on $H \oplus \mathbb{C}$ which takes
 $(e,0)$ to $\dfrac{(e,1)}{\sqrt{2}}$, $(0,1)$ to $\dfrac{(-e,1)}{\sqrt{2}}$ and is identity on the rest. Notice that such $D$ is unitary as it takes an orthonormal basis to an orthonormal basis. The operator $D$ has the following representation.
 \begin{eqnarray*}
 D=\left[\begin{array}{ccc}
{}\dfrac{1}{\sqrt{2}} & {}0 & {}-\dfrac{1}{\sqrt{2}}\\
{}0 & {}I & {}0 \\
 {}\dfrac{1}{\sqrt{2}} & {}0 & {}\dfrac{1}{\sqrt{2}}
 \end{array}\right].
 \end{eqnarray*}
  As $D$ is unitary, $D^{-1}=D^*=\left[\begin{array}{ccc}
{}\dfrac{1}{\sqrt{2}} & {}0 & {}\dfrac{1}{\sqrt{2}}\\
{}0 & {}I & {}0 \\
 {}-\dfrac{1}{\sqrt{2}} & {}0 & {}\dfrac{1}{\sqrt{2}}
 \end{array}\right]$.
\begin{proposition}\label{equivalence}
The Hermitian forms $\mathcal{A}$ and $\widehat{\mathcal{A}}$ are equivalent, i.e. $D^{-1}A'D=\widehat{A'}$ (see (\ref{A'}) and (\ref{r})).
\end{proposition}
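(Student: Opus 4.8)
The plan is to verify the matrix identity $D^{-1}A'D = \widehat{A'}$ directly, since all three matrices $A'$, $\widehat{A'}$, and $D$ are given explicitly in block form with respect to the decomposition $H \oplus \mathbb{C} = \langle e\rangle \oplus \langle e\rangle^{\perp} \oplus \mathbb{C}$. Equivalently, because $D$ is unitary, the identity $D^{-1}A'D = \widehat{A'}$ is the same as $A'D = D\widehat{A'}$, and I would check whichever of these is computationally cleaner. Once this matrix identity holds, the equivalence of the forms follows immediately: for any $\mathbb{x}, \mathbb{y} \in H \oplus \mathbb{C}$ we have $\widehat{\mathcal{A}}(\mathbb{x},\mathbb{y}) = \langle \widehat{A'}\mathbb{x}, \mathbb{y}\rangle = \langle D^{-1}A'D\,\mathbb{x},\mathbb{y}\rangle = \langle A'D\mathbb{x}, D\mathbb{y}\rangle = \mathcal{A}(D\mathbb{x}, D\mathbb{y})$, using that $D$ is unitary so that $D^{-1} = D^*$ and $\langle D^{-1}u, v\rangle = \langle u, Dv\rangle$.

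Concretely, I would compute $A'D$ and $D\widehat{A'}$ as $3\times 3$ block matrices. With
\[
A' = \left[\begin{array}{ccc} 1 & 0 & 0\\ 0 & I & 0\\ 0 & 0 & -1\end{array}\right], \qquad
\widehat{A'} = \left[\begin{array}{ccc} 0 & 0 & -1\\ 0 & I & 0\\ -1 & 0 & 0\end{array}\right], \qquad
D = \left[\begin{array}{ccc} \tfrac{1}{\sqrt 2} & 0 & -\tfrac{1}{\sqrt 2}\\ 0 & I & 0\\ \tfrac{1}{\sqrt 2} & 0 & \tfrac{1}{\sqrt 2}\end{array}\right],
\]
one gets
\[
A'D = \left[\begin{array}{ccc} \tfrac{1}{\sqrt 2} & 0 & -\tfrac{1}{\sqrt 2}\\ 0 & I & 0\\ -\tfrac{1}{\sqrt 2} & 0 & -\tfrac{1}{\sqrt 2}\end{array}\right], \qquad
D\widehat{A'} = \left[\begin{array}{ccc} \tfrac{1}{\sqrt 2} & 0 & -\tfrac{1}{\sqrt 2}\\ 0 & I & 0\\ -\tfrac{1}{\sqrt 2} & 0 & -\tfrac{1}{\sqrt 2}\end{array}\right],
\]
so the two agree. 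Since $D$ is invertible this gives $D^{-1}A'D = \widehat{A'}$, and by the displayed computation in the previous paragraph, $\widehat{\mathcal{A}}(\mathbb{x},\mathbb{y}) = \mathcal{A}(D\mathbb{x},D\mathbb{y})$ for all $\mathbb{x},\mathbb{y}$, which is exactly the assertion that $\mathcal{A}$ and $\widehat{\mathcal{A}}$ are equivalent via $D$.

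There is essentially no obstacle here — the statement is a routine verification. The only point requiring a little care is bookkeeping of the block structure: the matrices act on a three-fold decomposition where the middle block is infinite-dimensional (the operator $I$ on $\langle e\rangle^{\perp}$), but since $D$, $A'$ and $\widehat{A'}$ all act as the identity on this middle summand and mix only the two one-dimensional pieces $\langle e\rangle$ and $\mathbb{C}$, the computation reduces to a $2\times 2$ matrix identity together with the trivial observation that all three maps commute with the projection onto $\langle e\rangle^{\perp}$. I would also remark explicitly that unitarity of $D$ (already noted in the text, as it carries an orthonormal basis to an orthonormal basis) is what allows the passage from the operator identity to the statement about the forms, and that consequently the induced projective map $[D^{-1}]$ is the Cayley biholomorphism $B \to \Sigma$ referred to earlier.
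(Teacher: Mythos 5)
Your proposal is correct and is essentially the paper's own argument: the paper simply multiplies out $D^{-1}A'D$ as block matrices and observes the product is $\widehat{A'}$, and checking the rearranged identity $A'D = D\widehat{A'}$ (together with your remark that unitarity of $D$ turns the operator identity into $\widehat{\mathcal{A}}(\mathbb{x},\mathbb{y})=\mathcal{A}(D\mathbb{x},D\mathbb{y})$) is the same routine verification. Your block computations are accurate, so nothing is missing.
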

\begin{proof}
\begin{eqnarray*}
D^{-1}A'D&=&\left[\begin{array}{ccc}
{}\dfrac{1}{\sqrt{2}} & {}0 & {}\dfrac{1}{\sqrt{2}}\\
{}0 & {}I & {}0 \\
 {}-\dfrac{1}{\sqrt{2}} & {}0 & {}\dfrac{1}{\sqrt{2}}
 \end{array}\right]\left[\begin{array}{ccc}
1 & 0 & 0 \\
0 & I & 0\\
0 & 0 & -1
\end{array}\right]\left[\begin{array}{ccc}
{}\dfrac{1}{\sqrt{2}} & {}0 & {}-\dfrac{1}{\sqrt{2}}\\
{}0 & {}I & {}0 \\
 {}\dfrac{1}{\sqrt{2}} & {}0 & {}\dfrac{1}{\sqrt{2}}
 \end{array}\right]\\
 &=&\left[\begin{array}{ccc}
{}\dfrac{1}{\sqrt{2}} & {}0 & {}-\dfrac{1}{\sqrt{2}}\\
{}0 & {}I & {}0 \\
 -{}\dfrac{1}{\sqrt{2}} & {}0 & -{}\dfrac{1}{\sqrt{2}}
 \end{array}\right]\left[\begin{array}{ccc}
{}\dfrac{1}{\sqrt{2}} & {}0 & {}-\dfrac{1}{\sqrt{2}}\\
{}0 & {}I & {}0 \\
 {}\dfrac{1}{\sqrt{2}} & {}0 & {}\dfrac{1}{\sqrt{2}}
 \end{array}\right]\\
&=& \left[\begin{array}{ccc}
0 & 0 & -1 \\
0 & I & 0\\
-1 & 0 & 0
\end{array}\right].
\end{eqnarray*}
\end{proof}

Next we will see that the projectivization of $D^{-1}$ is a Cayley map which is a biholomorphism between the unit ball $B$  and the Siegel domain $\Sigma$.\\
For a time-like vector $(x,1)$,
\begin{eqnarray*}
 D^{-1}(x,1)&=& \left(\dfrac{\left<x,e\right>e}{\sqrt{2}}+x'+\dfrac{e}{\sqrt{2}},-\dfrac{\left<x,e\right>}{\sqrt{2}}+\dfrac{1}{\sqrt{2}}\right)\\
 &=& \dfrac{\left<x,e\right>+1}{\sqrt{2}}(e,0)+(x',0)+\dfrac{1-\left<x,e\right>}{\sqrt{2}}(0,1)\\
 &=& \dfrac{1-\left<x,e\right>}{\sqrt{2}}\left(\dfrac{1+\left<x,e\right>}{1-\left<x,e\right>}(e,0)+\dfrac{\sqrt{2}}{1-\left<x,e\right>}(x',0)+(0,1)\right)
 \end{eqnarray*}
which is well defined as Cauchy-Schwarz inequality gives $|\left<x,e\right>| \leq \|x\| \|e\|=\|x\|<1$, hence $1-\left<x,e\right> \neq 0$.  $D^{-1}$ induces the following projective transformation
 \[\mathcal{D}(x)=\dfrac{1+\left<x,e\right>}{1-\left<x,e\right>}e+\dfrac{\sqrt{2}}{1-\left<x,e\right>}x'.\]
 \begin{proposition}
 The projective map  $\mathcal{D}: B \longrightarrow \Sigma$ defined by $x \mapsto \dfrac{1+\left<x,e\right>}{1-\left<x,e\right>}e+\dfrac{\sqrt{2}}{1-\left<x,e\right>}x'$ is a bi-holomorphism.
 \end{proposition}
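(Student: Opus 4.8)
The plan is to recognize $\mathcal{D}$ as the projectivization of the unitary operator $D^{-1}$ and to transport everything through Proposition \ref{equivalence}, which says that $D^{-1}$ is a linear isometry from $(H\oplus\mathbb{C},\mathcal{A})$ onto $(H\oplus\mathbb{C},\widehat{\mathcal{A}})$. First I would record the two identifications of the domains: $B$ is the image under $(x,1)\mapsto x$ of the cone of $\mathcal{A}$-time-like vectors, since $\mathcal{Q}(x,1)=\|x\|^2-1<0$ exactly when $x\in B$; and $\Sigma$ is the image under $(y,w)\mapsto w^{-1}y$ of the cone of $\widehat{\mathcal{A}}$-time-like vectors, since by (\ref{eq:11}) such a vector has $w\neq 0$ and $\widehat{\mathcal{Q}}(w^{-1}y,1)<0$ is precisely $w^{-1}y\in\Sigma$. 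Because $D^{-1}$ (hence $D$) carries one time-like cone bijectively onto the other, it descends to a bijection between $B$ and $\Sigma$; the content left is to check the formulas, to verify that the relevant last coordinates never vanish so that the descent is well-defined, and to read off holomorphy in both directions.

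Next I would handle $\mathcal{D}$ itself. Well-definedness on $B$ is exactly the remark made before the statement: $|\langle x,e\rangle|\le\|x\|<1$, so $1-\langle x,e\rangle\neq 0$. Holomorphy is then immediate from the explicit formula, since $x\mapsto\langle x,e\rangle$ and $x\mapsto x'$ are bounded linear (hence holomorphic) and $t\mapsto(1-t)^{-1}$ is holomorphic on the open unit disc, so $\mathcal{D}$ is a sum of products of holomorphic maps. For the image I would use the computation already displayed, $D^{-1}(x,1)=\tfrac{1-\langle x,e\rangle}{\sqrt2}(\mathcal{D}(x),1)$: since $\widehat{\mathcal{Q}}$ is quadratic-homogeneous and $\widehat{\mathcal{Q}}(D^{-1}(x,1))=\mathcal{Q}(x,1)=\|x\|^2-1<0$, we conclude $\widehat{\mathcal{Q}}(\mathcal{D}(x),1)<0$, i.e. $\mathcal{D}(x)\in\Sigma$. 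This confirms $\mathcal{D}(B)\subseteq\Sigma$ with no direct estimate.

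For the inverse I would apply $D$ to $(y,1)$ with $y\in\Sigma$. Using $D(e,0)=\tfrac1{\sqrt2}(e,1)$, $D(0,1)=\tfrac1{\sqrt2}(-e,1)$ and $D=\mathrm{id}$ on $\langle e\rangle^{\perp}$, one gets $D(y,1)=\bigl(\tfrac1{\sqrt2}((\langle y,e\rangle-1)e+\sqrt2\,y'),\ \tfrac1{\sqrt2}(\langle y,e\rangle+1)\bigr)$, so the induced projective map is
\[
\mathcal{E}(y)=\frac{(\langle y,e\rangle-1)e+\sqrt2\,y'}{\langle y,e\rangle+1}.
\]
This is well-defined and holomorphic on $\Sigma$ because $y\in\Sigma$ forces $\mathrm{Re}\,\langle y,e\rangle>\tfrac12\|y'\|^2\ge0$, hence $\langle y,e\rangle\neq-1$; and by the same homogeneity argument, now with $D$ in place of $D^{-1}$, one gets $\widehat{\mathcal{Q}}(y,1)<0\Rightarrow\mathcal{Q}(D(y,1))<0$, so $\mathcal{E}(\Sigma)\subseteq B$. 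Finally $\mathcal{E}\circ\mathcal{D}=\mathrm{id}_B$ and $\mathcal{D}\circ\mathcal{E}=\mathrm{id}_\Sigma$ follow from $DD^{-1}=D^{-1}D=I$ by passing to projectivizations, where the nonzero scalar factors are absorbed. Thus $\mathcal{D}$ is a biholomorphism with inverse $\mathcal{E}$. I do not expect a genuine obstacle here: the only care needed is the bookkeeping that keeps the projectivization well-defined, namely the non-vanishing of the last coordinate on $B$ (Cauchy--Schwarz) and on $\Sigma$ (positivity of $\mathrm{Re}\,\langle\cdot,e\rangle$), after which holomorphy in both directions is read off the explicit formulas.
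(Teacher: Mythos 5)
Your proposal is correct, and it follows the same underlying idea as the paper (namely, $\mathcal{D}$ is the projectivization of the unitary $D^{-1}$), but the execution differs at the two places where the paper is either computational or terse. For the containment $\mathcal{D}(B)\subseteq\Sigma$ the paper does a direct estimate, computing $\mathrm{Re}\,\bigl(\tfrac{1+\left<x,e\right>}{1-\left<x,e\right>}\bigr)=\tfrac{1-|\left<x,e\right>|^2}{|1-\left<x,e\right>|^2}$ and comparing it with $\tfrac12\bigl\|\tfrac{\sqrt2}{1-\left<x,e\right>}x'\bigr\|^2$ via $\|x\|<1$; you instead invoke the form equivalence $\widehat{\mathcal{A}}(u,v)=\mathcal{A}(Du,Dv)$ of Proposition \ref{equivalence} together with homogeneity of $\widehat{\mathcal{Q}}$, which replaces the inequality by a one-line invariance argument. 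For bijectivity and biholomorphy the paper simply asserts that bijectivity of $D^{-1}$ implies that of $\mathcal{D}$ and that holomorphicity is easy; you make this precise by identifying $B$ and $\Sigma$ as projectivized time-like cones of $\mathcal{A}$ and $\widehat{\mathcal{A}}$, noting that $D^{-1}$ exchanges the cones, and, more concretely, by writing down the explicit inverse $\mathcal{E}(y)=\tfrac{(\left<y,e\right>-1)e+\sqrt2\,y'}{\left<y,e\right>+1}$, checking its well-definedness on $\Sigma$ from $\mathrm{Re}\,\left<y,e\right> > 0$, its holomorphy, and that $\mathcal{Q}(D(y,1))<0$ forces $\mathcal{E}(\Sigma)\subseteq B$. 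Your route therefore buys a self-contained verification of surjectivity and of holomorphy of the inverse (exactly the points the paper leaves implicit), at the cost of slightly more bookkeeping; the paper's estimate, on the other hand, yields the explicit boundary behaviour that is reused in Remark \ref{infinity}, so if you adopt your version you should still note that the same invariance argument applied to light-like vectors gives $\mathcal{D}(\partial B\setminus\{e\})\subseteq\partial\Sigma$.
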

 \begin{proof}
 First we will show that $\mathcal{D}(B) \subseteq \Sigma$. Consider for $x \in B$
 \begin{eqnarray*}
 \text{Re}\,\left(\dfrac{1+\left<x,e\right>}{1-\left<x,e\right>}\right)&=&\text{Re}\,\left(\left(\dfrac{1+\left<x,e\right>}{1-\left<x,e\right>}\right)\left(\dfrac{1-\left<e,x\right>}{1-\left<e,x\right>}\right)\right)\\
 &=&\text{Re}\,\left(\dfrac{1-|\left<x,e\right>|^2+2i\,\text{Im}\,\left<x,e\right>}{\lvert 1-\left<x,e\right>
\rvert^2}\right)\\
&=& \dfrac{1-|\left<x,e\right>|^2}{\lvert 1-\left<x,e\right>
\rvert^2}.
\end{eqnarray*}
As $x \in B$, $\|x\|^2=\|\left<x,e\right>e+x'\|^2=|\left<x,e\right>|^2+\|x'\|^2 < 1$, i.e. $1-|\left<x,e\right>|^2 > \|x'\|^2$. Hence
\[\text{Re}\,\left(\dfrac{1+\left<x,e\right>}{1-\left<x,e\right>}\right)>\dfrac{\|x'\|^2}{\lvert1-\left<x,e\right>\rvert^2}=\dfrac{1}{2}\left\|\dfrac{\sqrt{2}}{1-\left<x,e\right>}x'\right\|^2.\]
As $D^{-1}(x,1)=\alpha(x)(\mathcal{D}(x),1)$ for some $\alpha(x) \in \mathbb{C}$, bijectivity of $D^{-1}$ implies that of $\mathcal{D}$. The holomorphicity of the map is easy to establish.

\end{proof}
\begin{remark}\label{infinity}
Let $\mathcal{D}(e)$ be denoted by $\infty$ and  $\overline{\Sigma}=\Big\{x \in H \,\,:\,\, \text{Re}\,\left<x,e\right> \geq \frac{1}{2}\|x'\|^2\Big\}\cup\{\infty\}$. So, $\overline{\Sigma}$ is the completion of $\overline{\Sigma} \setminus \infty$.  Observe that if for $x \in \partial{B}$, $\left<x,e\right>=1$, then  $1=|\left<x,e\right>| \leq \|x\|=1$ and hence $x=ce$, for some $c \in \mathbb{C}$, by Cauchy-Schwarz inequality. This gives that $x=e$ and we have $1-\left<x,e\right> \neq 0$ for $x \in \partial{B} \setminus \{e\}$.
 Proof of the above Proposition tells that $\mathcal{D}$ maps $\partial{B}$ to $\partial{\Sigma}$ bijectively. Also $\mathcal{D}:\overline{B}\setminus \{e\} \longrightarrow \overline{\Sigma}\setminus \{\infty\}$ is a homeomorphism. We define neighbourhoods of $\infty$ to be images of neighbourhoods of $e$. This makes $\overline{\Sigma}$ a topological space and $\mathcal{D}:\overline{B} \longrightarrow \overline{\Sigma}$, a homeomorphism.
The projective transformation $\mathcal{D}$ is called the Cayley map. Metric on $\Sigma$ can be defined as the pull back of the Carath\'eodory metric on $B$ via Cayley map.
\end{remark}

Let $\widehat{G}$ be the group of all bijective bounded linear operators on $H \oplus \mathbb{C}$ leaving $\widehat{\mathcal{A}}$ invariant. Proposition \ref{equivalence} tells that
\[\widehat{G}=\{\widehat{T} \in B(H \oplus \mathbb{C})\,\,:\,\,\widehat{T}\,\,\text{is bijective and}\,\,\widehat{T}=D^{-1}TD,\,\,T \in G\}.\]
Also elements of the group $Aut(\Sigma)$ are the projective maps induced from elements of $\widehat{G}$ where the group $Aut(\Sigma)$ is defined as follows.
 \[Aut(\Sigma) =\{\mathcal{D}h\mathcal{D}^{-1},\,\,\,\,h \in Aut(B)\}.\]
\begin{remark}\label{relation}
Cayley map gives fixed point classification for isometries in $Aut(\Sigma)$.
      We call a vector $(x,z)$  time-like, light-like or space-like  with respect to the Hermitian form $\w{\mathcal{A}}$ defined in (\ref{eq:10}) if
     $\w{Q}(x,z) <,= \text{or} > 0$ respectively. We shall refer to these vectors  also as time-like, light-like or space-like. Meaning would be clear from the context.

 \end{remark}

\textbf{Let us denote  the stabilizer group of infinity in $\boldsymbol{\widehat{G}}$ by $\boldsymbol{\widehat{G}_{\infty}}$}.  $\mathcal{D}(e)=\infty$. $(e,1)$ is the lift of $e$ and $D^{-1}(e,1)={\sqrt{2}}(e,0)$. Thus $(e,0)$ will be considered as standard lift of $\infty$.
 \begin{proposition}[Stabilizer group of infinity] \label{l}
 Let $\widehat{T} \in \widehat{G}$ satisfy $\widehat{T}(e,0)=\lambda(e,0)$, then
 \begin{eqnarray}\label{eq:1}
 \widehat{T}=\left[\begin{array}{ccc}
  \lambda    & \left<\boldsymbol{\cdot},U^{-1}(a')\right> &   \mu\,s\\
    0  & U & \mu\,a'\\
    0 & 0 & \mu
 \end{array}\right]
 \end{eqnarray}
 where  $a' \in \left<e\right>^{\perp}$ in $H$, $U \in \mathcal{U}(\left<e\right>^{\perp})$, $\lambda,\, s \in \mathbb{C}$  obey $\lambda \overline{\mu}=1$ and $Re\, s=\frac{1}{2}\|a'\|^2$.
 \end{proposition}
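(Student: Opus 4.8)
The plan is to recover $\widehat{T}$ entirely from the eigenvector equation $\widehat{T}(e,0)=\lambda(e,0)$ together with the condition that $\widehat{T}$ preserve $\widehat{\mathcal{A}}$, working throughout in the decomposition $H\oplus\mathbb{C}=\langle e\rangle\oplus\langle e\rangle^{\perp}\oplus\mathbb{C}$ in which $\widehat{A'}$ has the block form $(\ref{r})$. First I would write an arbitrary bounded operator on $H\oplus\mathbb{C}$ as a $3\times3$ block matrix relative to this splitting: since $\langle e\rangle$ and $\mathbb{C}$ are one dimensional and $\langle e\rangle^{\perp}$ is a Hilbert space, its entries are scalars, vectors of $\langle e\rangle^{\perp}$, bounded functionals $\langle\boldsymbol{\cdot},v\rangle$ on $\langle e\rangle^{\perp}$ (Riesz representation), and a bounded operator on $\langle e\rangle^{\perp}$. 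The hypothesis $\widehat{T}(e,0)=\lambda(e,0)$ simply says that in the first block column only the top--left entry $\lambda$ is nonzero, and $\lambda\neq0$ since $\widehat{T}$ is injective.

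The second step is to make this matrix block upper triangular. The standard lift $(e,0)$ of $\infty$ is light-like, and a one--line computation from $(\ref{eq:10})$ shows $\langle(e,0)\rangle^{\dagger}=H\oplus 0$. Since $\widehat{T}$ carries the line $\langle(e,0)\rangle$ onto itself, the argument of Lemma~\ref{b}, which holds verbatim for $\widehat{G}$ and $\widehat{\mathcal{A}}$ (or may be transported from Lemma~\ref{b} via the equivalence of Proposition~\ref{equivalence}), gives $\widehat{T}(H\oplus0)=H\oplus0$; inspecting the $\mathbb{C}$--coordinate of $\widehat{T}(x',0)$ then forces the $(3,2)$ entry to vanish. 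Thus $\widehat{T}=\left[\begin{array}{ccc}\lambda & \langle\boldsymbol{\cdot},b'\rangle & \gamma\\ 0 & U & q\\ 0 & 0 & \mu\end{array}\right]$ with $b',q\in\langle e\rangle^{\perp}$, $\gamma,\mu\in\mathbb{C}$. Bijectivity of $\widehat{T}$ and $\lambda\neq0$ force $\mu\neq0$ and $U$ bijective; moreover $\widehat{T}$ preserves $H\oplus0$ and $\langle(e,0)\rangle$, hence induces a linear isometry of the \emph{positive definite} form that $\widehat{\mathcal{A}}$ determines on $(H\oplus0)/\langle(e,0)\rangle\cong\langle e\rangle^{\perp}$, and this induced isometry is exactly $U$. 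Therefore $U^{*}U=I$, and being bijective, $U\in\mathcal{U}(\langle e\rangle^{\perp})$.

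For the last step I would write out $\widehat{\mathcal{A}}(\widehat{T}(x,z),\widehat{T}(y,w))=\widehat{\mathcal{A}}((x,z),(y,w))$, expand both sides using $x=\langle x,e\rangle e+x'$, and equate coefficients of the independent monomials in $\langle x,e\rangle,\langle y,e\rangle,x',y',z,w$. The $z\,\overline{\langle y,e\rangle}$ terms give $\lambda\overline{\mu}=1$; the terms linear in $x'$ against $\overline{w}$ (equivalently, linear in $\overline{y'}$ against $z$) give $U^{*}q=\mu b'$, i.e. $b'=U^{-1}(q/\mu)$; and the $z\,\overline{w}$ terms give $2\,\mathrm{Re}(\mu\overline{\gamma})=\|q\|^{2}$. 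Writing $a'=q/\mu$ and $s=\gamma/\mu$, which is legitimate since $\mu\neq0$, turns these relations into $\lambda\overline{\mu}=1$, $b'=U^{-1}(a')$ and $\mathrm{Re}\,s=\tfrac12\|a'\|^{2}$, and substituting back produces precisely the matrix $(\ref{eq:1})$. The only genuine computation is this coefficient comparison; the places to be careful are the bookkeeping of conjugate-linearity (so that the $(1,2)$ entry comes out as $\langle\boldsymbol{\cdot},U^{-1}(a')\rangle$ rather than $\langle\boldsymbol{\cdot},U(a')\rangle$) and the use of bijectivity in addition to $U^{*}U=I$ to conclude that $U$ is unitary and not merely isometric.
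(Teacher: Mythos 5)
Your argument is correct: the eigenvector hypothesis pins down the first block column, Lemma \ref{b} (whose proof indeed uses nothing beyond invertibility and form-invariance, so it holds verbatim for $\widehat{\mathcal{A}}$) applied to $M=\left<(e,0)\right>$ with $M^{\dagger}=H\oplus 0$ gives the upper-triangular shape, and the coefficient comparison yields exactly $\lambda\overline{\mu}=1$, $U^{*}q=\mu b'$ and $2\,\mathrm{Re}(\gamma\overline{\mu})=\|q\|^{2}$, which after the legitimate substitution $a'=q/\mu$, $s=\gamma/\mu$ produce (\ref{eq:1}) with $\mathrm{Re}\,s=\tfrac12\|a'\|^{2}$. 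The route is organizationally different from the paper's, though the engine (invariance of $\widehat{\mathcal{A}}$) is the same: the paper never writes a general block matrix, but evaluates $\widehat{T}(e_j,0)$ and $\widehat{T}(0,1)$ against an orthonormal basis $\{e_j\}$ of $\left<e\right>^{\perp}$, extracts $z_j=0$, $\lambda\overline{\mu}=1$, $b_j=\lambda\left<w_j',c'\right>$ and the orthonormality of the images $w_j'$ from pairwise form-invariance, defines $U(e_j)=w_j'$, and gets surjectivity of $U$ from surjectivity of $\widehat{T}$; you instead obtain triangularity from Lemma \ref{b} and unitarity of $U$ from the positive definite form induced on $(H\oplus 0)/\left<(e,0)\right>\cong\left<e\right>^{\perp}$, plus the (correct) remark that bijectivity of $\widehat{T}$ and $\lambda\neq 0$ force $U$ bijective. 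Your version buys a basis-free argument with no indexing-set bookkeeping and a conceptual reason why the middle block is unitary; the paper's buys a single constructive sweep that produces every matrix entry without needing to justify the block-matrix ansatz or the passage to the quotient. If you write yours up, do spell out the compressed verifications: that $\left<(e,0)\right>^{\dagger}=H\oplus 0$, that the radical of $\widehat{\mathcal{A}}\restriction_{H\oplus 0}$ is exactly $\left<(e,0)\right>$ so the quotient form is positive definite and is preserved by the induced map $U$, and the short injectivity/surjectivity argument deducing bijectivity of $U$ from that of $\widehat{T}$.
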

 \begin{proof}
 The fact that $\w{T}$ leaves $\w{\mathcal{A}}$ invariant will be used to establish the result. Let $\widehat{T}(e_j,0) =\left(b_j\,e+{w_j}' ,z_j\right)$ and $\widehat{T}(0,1)=\left(te+c',\mu\right)$,   where $b_j, \, z_j,\,t,\,\mu \in \mathbb{C}$ and ${w_j}',\,c' \in \left<e\right>^{\perp},\,j \in I$. 
Using (\ref{eq:10}) and (\ref{eq:11}), we get $\widehat{{Q}}(e,0)=\widehat{{Q}}(0,1)=0$, $\widehat{{Q}}(e_j,0)=1$ and $\widehat{\mathcal{A}}\Big((e,0),(e_j,0)\Big)=\widehat{\mathcal{A}}\Big((0,1),(e_j,0)\Big)=\widehat{\mathcal{A}}\Big((e_i,0),(e_j,0)\Big)\\=0$, $i \neq j$ and $\widehat{\mathcal{A}}\Big((e,0),(0,1)\Big)=-1$.

Also $\widehat{{Q}}\Big(\widehat{T}(e,0)\Big)=\widehat{{Q}}\Big(\lambda(e,0)\Big)=0$. Since $\widehat{T}$ leaves $\widehat{\mathcal{A}}$ invariant, we have\\ $\widehat{{Q}}\Big(\widehat{T}(0,1)\Big)=\widehat{{Q}}(0,1)$, i.e. $\widehat{{Q}}\left(te+c',\mu\right)=-2\text{Re}\,(\overline{\mu}t)+\|c'\|^2=0$ which gives that
\[\text{Re}\,(\overline{\mu}t)=\dfrac{1}{2}\|c'\|^2.\]
Similarly $\widehat{\mathcal{A}}\Big(\widehat{T}(e,0),\widehat{T}(0,1)\Big)=\widehat{\mathcal{A}}\Big((e,0),(0,1)\Big)$, i.e.\\ $\widehat{\mathcal{A}}\Big(\lambda(e,0),\left(te+c',\mu\right)\Big) =-\lambda \overline{\mu}=-1$ implying
\[\lambda \overline{\mu}=1.\]
 $\widehat{\mathcal{A}}\Big(\widehat{T}(e,0),\widehat{T}(e_j,0)\Big)=\widehat{\mathcal{A}}\Big((e,0),(e_j,0)\Big)$, i.e. $\Big(\lambda(e,0),\left(b_j\,e+{w_j}'  ,z_j\right)\Big)=-\lambda \overline{z_j}=0$ giving that
 \[z_j=0.\]
 $\widehat{\mathcal{A}}\Big(\widehat{T}(0,1),\widehat{T}(e_j,0)\Big)=\widehat{\mathcal{A}}\Big((0,1),(e_j,0)\Big)$ , i.e.\\ $\widehat{\mathcal{A}}\Big(\left(te+c',\mu\right), \left(b_j\,e+{w_j}',0\right)\Big)=-\mu \overline{b_j}+  \left<c',{w_j}'\right>=0$ which yields that $\overline{\mu}b_j=\left<{w_j}',c'\right>$, i.e.
 \begin{eqnarray}\label{bj}
b_j= \lambda \left<{w_j}',c'\right>.
\end{eqnarray}
$\widehat{{Q}}\Big(\widehat{T}(e_j,0)\Big)=\widehat{{Q}}(e_j,0)$, i.e. $\widehat{{Q}}\left(b_j\,e+{w_j}', 0\right)=\|{w_j}'\|^2=1$.

Lastly for $i \neq j$,  $\widehat{\mathcal{A}}\Big(\widehat{T}(e_i,0),\widehat{T}(e_j,0)\Big)=\widehat{\mathcal{A}}\Big((e_i,0),(e_j,0)\Big)$, i.e.\\ $\widehat{\mathcal{A}}\Big(\left(b_i\,e+{w_i}' ,0\right),\left(b_j\,e+{w_j}' ,0\right)\Big)=\left<{w_i}',{w_j}'\right>=0$.

The above computation asserts  that the collection $\{{w_j}',\,\,j \in I\}$ is   orthonormal in $\left<e\right>^{\perp}$.\\
Define a linear operator $U : \left<e\right>^{\perp} \longrightarrow \left<e\right>^{\perp}$ as $U(e_j)= {w_j}'$. Then $U$ is bounded and norm preserving.
Let $x \in H$ where $x'=\sum\limits_{j=1_x}^{n_x}\left<x',e_j\right>e_j$. Observe that
\begin{align*}
\sum\limits_{j=1_x}^{n_x}\left<x',e_j\right>b_j&= \sum\limits_{j=1_x}^{n_x}\left<x',e_j\right>\lambda\left<U(e_j),c'\right>\,\,by\,\, (\ref{bj})\\
&=\lambda\left<U\left(\sum\limits_{j=1_x}^{n_x}\left<x',e_j\right>e_j\right),c'\right>\\
&=\lambda\left<U(x'),c'\right>\\
&=\lambda\left<x',U^{-1}(c')\right>.
\end{align*}
Consider
\begin{align*}
\widehat{T}(x,z)&=\widehat{T}\Big(\left<x,e\right>(e,0)+(x',0)+z(0,1)\Big)\\
&= \left(\lambda\,\left<x,e\right>  (e,0)+\sum\limits_{j=1_x}^{n_x}\left<x',e_j\right>\Big(b_j\,e+U(e_j) ,0\Big)+z\Big(te+c',\mu\Big)\right)\\
&=  \left(\Big(\lambda\left<x,e\right>+\sum\limits_{j=1_x}^{n_x}\left<x',e_j\right>b_j+z\,t\Big)e + \sum\limits_{j=1_x}^{n_x}\left<x',e_j\right>U(e_j)+zc' ,z \mu\right) \\\
&=  \bigg(\Big(\lambda\left<x,e\right>+\lambda\left<x',U^{-1}(c')\right>+z\,t\Big)e + U(x')+zc' ,z \mu\bigg)\\
&= \left[\begin{array}{ccc}
  \lambda    & \lambda\left<\boldsymbol{\cdot}, U^{-1}(c')\right> &   t\\
    0  & U & c'\\
    0 & 0 & \mu
 \end{array}\right]\left[\begin{array}{c}
 \left<x,e\right>\\
 x'\\
 z
 \end{array}\right].
\end{align*}
We will see that $U$ is surjective which will make it a unitary operator. As $\widehat{T}$ is surjective, for $y' \in \left<e\right>^{\perp}$, there exists $(x,z) \in H \oplus  \mathbb{C}$ such that $\widehat{T}(x,z)=y'$. This implies that $U(x')+zc'=y'$ and $z \mu=0$. This further yields that $z=0$ and hence $U(x')=y'$ thereby making $U$ a unitary operator.\\
 $\widehat{T}$ can also be written as
 $\left[\begin{array}{ccc}
  \lambda    & \lambda\left<\boldsymbol{\cdot}, U^{-1}\left(\dfrac{\mu c'}{\mu}\right)\right> &   \dfrac{\mu t}{\mu}\\
   0  & U & \dfrac{\mu c'}{\mu}\\
    0 & 0 & \mu
 \end{array}\right]$.
  By taking $\dfrac{c'}{\mu}=a'$ and $\dfrac{t}{\mu}=s$, we get
 \[\widehat{T}=\left[\begin{array}{ccc}
  \lambda    & \left<\boldsymbol{\cdot}, U^{-1}(a')\right> &   \mu\,s\\
    0  & U & \mu\,a'\\
    0 & 0 & \mu
 \end{array}\right]\]
 where ${\lambda}\overline{\mu}=1$ and $\text{Re}(\overline{\mu}\mu s)=\dfrac{1}{2}\|\mu a'\|^2$, i.e. $\text{Re}\,s=\dfrac{1}{2}\|a'\|^2$.

 \end{proof}

\begin{corollary}\label{c6}
If $\widehat{T} \in \widehat{G}_\infty$ is   such that $s=0$, cf. (\ref{eq:1}) then $\widehat{T}$ is non-parabolic.
\end{corollary}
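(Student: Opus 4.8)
The plan is to read off from the normal form (\ref{eq:1}) in Proposition \ref{l} that the hypothesis $s=0$ forces $\widehat{T}$ to be block-diagonal, and then to exhibit two distinct fixed points in $\overline{\Sigma}$ for the isometry of $Aut(\Sigma)$ induced by $\widehat{T}$. By Proposition \ref{l}, the data of $\widehat{T}$ satisfy $\text{Re}\,s=\frac{1}{2}\|a'\|^2$; hence $s=0$ gives $0=\text{Re}\,s=\frac{1}{2}\|a'\|^2$, so $a'=0$. Substituting $a'=0$ and $s=0$ into (\ref{eq:1}) turns $\widehat{T}$ into the block-diagonal operator
\[
\widehat{T}=\left[\begin{array}{ccc}
\lambda & 0 & 0\\
0 & U & 0\\
0 & 0 & \mu
\end{array}\right]
\]
on $\left<e\right>\oplus\left<e\right>^{\perp}\oplus\mathbb{C}$, with $\lambda\overline{\mu}=1$ and $U\in\mathcal{U}(\left<e\right>^{\perp})$. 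In particular $\widehat{T}(e,0)=\lambda(e,0)$ and $\widehat{T}(0,1)=\mu(0,1)$, so $(e,0)$ and $(0,1)$ are both eigenvectors of $\widehat{T}$.

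Next I would check that these eigenvectors give two distinct boundary points. By (\ref{eq:11}), $\widehat{Q}(e,0)=0$ and $\widehat{Q}(0,1)=0$, so both vectors are light-like, and they are visibly not proportional. The vector $(e,0)$ is the standard lift of $\infty$, while $(0,1)$ is a lift of $0\in H$, which lies on $\partial\Sigma$ since $\text{Re}\,\left<0,e\right>=0=\frac{1}{2}\|0\|^2$. Using the fixed point dictionary for the Siegel model (Remark \ref{relation}, the counterpart of Remark \ref{R1}), the element of $Aut(\Sigma)$ induced by $\widehat{T}$ fixes the two distinct boundary points $\infty$ and $0$.

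Finally I would conclude from the definition of parabolic: a parabolic isometry of $Aut(\Sigma)$ has exactly one fixed point in $\overline{\Sigma}$, lying on $\partial\Sigma$; since $\widehat{T}$ induces an isometry with at least two such fixed points, $\widehat{T}$ is non-parabolic. I do not anticipate a genuine obstacle here, as the corollary is essentially a bookkeeping consequence of the normal form; the only points worth care are that $a'=0$ itself is forced (not merely $\text{Re}\,s=0$), and that the two exhibited light-like eigenvectors project to genuinely different points of $\partial\Sigma$, both of which are immediate. An alternative route would be to transport the question to the ball via $\widehat{T}=D^{-1}TD$ and argue there, but the direct computation above is shorter.
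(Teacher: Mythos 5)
Your proof is correct and follows essentially the same route as the paper: the paper's proof also observes that $s=0$ forces $a'=0$ via $\text{Re}\,s=\frac{1}{2}\|a'\|^2$, making $\widehat{T}$ block-diagonal with the light-like eigenvectors $(e,0)$ and $(0,1)$, and concludes non-parabolicity from the resulting two boundary fixed points. You have merely spelled out the bookkeeping (distinctness of the two boundary points and the fixed-point dictionary) that the paper leaves implicit.
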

\begin{proof}
Observe from Proposition \ref{l} that if $s=0$,  $a'=0$, so $\widehat{T}=\left[\begin{array}{ccc}
  \lambda    & 0 &   0\\
    0  & U & 0\\
    0 & 0 & \mu
 \end{array}\right]$ which  has $(0,1)$ and $(e,0)$ as light-like eigenvectors.  Hence $\w{T}$ is non-parabolic.
\end{proof}
\begin{remark}\label{R2}
  In  Proposition \ref{l}, $\lambda \overline{\mu}=1$, i.e ${\mu}=\dfrac{1}{\overline{\lambda}}$. This gives that  $|\lambda|=1$ if and only if $\lambda=\mu$.
   \end{remark}
 \begin{corollary}[Iwasawa decomposition]
 Every element in $\widehat{G}_{\infty}$ is the product of  a translation $(1,a',s)=\left[\begin{array}{ccc}
  1    &  \left<\boldsymbol{\cdot},a'\right> &   s\\
    0  & I & a'\\
    0 & 0 & 1\\
 \end{array}\right]$ ($\text{Re}\,s=\dfrac{1}{2}\|a'\|^2$), a rotation $ R_U=\left[\begin{array}{ccc}
  1    &  0 &   0\\
    0  & U & 0\\
    0 & 0 & 1
 \end{array}\right]$ where $U$ is a unitary operator and a dilation $D_{\lambda}=\left[\begin{array}{ccc}
  \lambda    &  0 &   0\\
    0  & I & 0\\
    0 & 0 & \mu
 \end{array}\right]$ where $\lambda \overline{\mu}=1$.
 \end{corollary}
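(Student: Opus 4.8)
The plan is to prove the factorization by a direct matrix computation, leaning entirely on the explicit normal form for elements of $\widehat{G}_\infty$ furnished by Proposition~\ref{l}. Given $\widehat{T} \in \widehat{G}_\infty$, I would write it as in \eqref{eq:1}, with parameters $\lambda,\mu \in \mathbb{C}$ obeying $\lambda\overline{\mu}=1$, $U \in \mathcal{U}(\langle e\rangle^\perp)$, $a' \in \langle e\rangle^\perp$, and $s \in \mathbb{C}$ with $\mathrm{Re}\,s = \tfrac12\|a'\|^2$. Using these same parameters, form the translation $(1,a',s)$, the rotation $R_U$, and the dilation $D_\lambda$ as in the statement, and claim that $\widehat{T} = (1,a',s)\,R_U\,D_\lambda$.

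First I would check that each of the three factors genuinely belongs to $\widehat{G}_\infty$: each evidently fixes the standard lift $(e,0)$ of $\infty$ (its first column is a scalar multiple of $(1,0,0)^t$), so it suffices that each preserves $\widehat{\mathcal{A}}$, and this is immediate from Proposition~\ref{l}. Indeed $(1,a',s)$ is the case $\lambda=\mu=1$, $U=I$ of \eqref{eq:1} and the required condition $\mathrm{Re}\,s=\tfrac12\|a'\|^2$ is exactly the constraint there; $R_U$ is the case $\lambda=\mu=1$, $a'=0$, $s=0$; and $D_\lambda$ is the case $U=I$, $a'=0$, $s=0$ with $\lambda\overline{\mu}=1$. (Alternatively one verifies $M^\ast \widehat{A'} M = \widehat{A'}$ directly for each factor $M$, using \eqref{r}.)

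Next I would multiply the three matrices in the order $(1,a',s)\,R_U\,D_\lambda$. Acting on a vector $(v_1,v_2,v_3) \in \langle e\rangle \oplus \langle e\rangle^\perp \oplus \mathbb{C}$, one has $D_\lambda(v_1,v_2,v_3)=(\lambda v_1,\,v_2,\,\mu v_3)$, then $R_U$ carries this to $(\lambda v_1,\,Uv_2,\,\mu v_3)$, and finally $(1,a',s)$ carries it to
\[
\bigl(\lambda v_1 + \langle Uv_2,\,a'\rangle + \mu s\, v_3,\ \ Uv_2 + \mu v_3\, a',\ \ \mu v_3\bigr).
\]
Since $U$ is unitary, $\langle Uv_2,\,a'\rangle = \langle v_2,\,U^{-1}(a')\rangle$, so the right-hand side is precisely $\widehat{T}(v_1,v_2,v_3)$ read off from \eqref{eq:1}. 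Hence $\widehat{T} = (1,a',s)\,R_U\,D_\lambda$, as asserted.

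There is no real obstacle here: all the substance is already packaged in Proposition~\ref{l}, and the only point requiring care is bookkeeping — the order of the factors and the placement of $\mu$ versus $\lambda$ (recall $\mu = 1/\overline{\lambda}$, so $\mu=\lambda$ precisely when $|\lambda|=1$, cf.\ Remark~\ref{R2}). If one wished, one could additionally observe that the decomposition is unique: $\lambda$ and $\mu$ are pinned down by the action on $(e,0)$ and $(0,1)$, $U$ by the action on $\langle e\rangle^\perp$, and then $a'$ and $s$ by the remaining matrix entries; but this is not needed for the statement.
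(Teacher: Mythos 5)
Your proposal is correct and follows essentially the same route as the paper: take the normal form of Proposition \ref{l}, form the translation, rotation and dilation from its parameters, and verify $\widehat{T}=(1,a',s)\,R_U\,D_\lambda$ by direct computation (the paper multiplies the matrices symbolically, you apply them to a vector, and both use $\langle Uv,a'\rangle=\langle v,U^{-1}(a')\rangle$ in the same way). The extra checks you include, that each factor lies in $\widehat{G}_\infty$ and the remark on uniqueness, are fine but not needed.
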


 \begin{proof}
 We have $(1,a',s)R_U D_{\lambda}=\left[\begin{array}{ccc}
  1    & \left<\boldsymbol{\cdot},a'\right> &   s\\
    0  &  I & a'\\
    0 & 0 & 1
 \end{array}\right]\left[\begin{array}{ccc}
  1    & 0 &   0\\
    0  & U & 0\\
    0 & 0 & 1
 \end{array}\right]\left[\begin{array}{ccc}
  \lambda    & 0 &   0\\
    0  &  I & 0\\
    0 & 0 & \mu
 \end{array}\right]$\\
 $=\left[\begin{array}{ccc}
  1    & \left<\boldsymbol{\cdot},U^{-1}(a')\right> &   s\\
    0  & U & a'\\
    0 & 0 & 1
 \end{array}\right]\left[\begin{array}{ccc}
  \lambda    & 0 &   0\\
    0  &  I & 0\\
    0 & 0 & \mu
 \end{array}\right]=\left[\begin{array}{ccc}
  \lambda    & \left<\boldsymbol{\cdot},U^{-1}(a')\right> &   \mu\,s\\
    0  & U & \mu\,a'\\
    0 & 0 & \mu
 \end{array}\right]$.
 \end{proof}
 \begin{proposition}\label{p10}
 The projective transformation induced by $\widehat{T} \in \widehat{G}_\infty$ satisfying (\ref{eq:1}) is an affine map $\mathcal{\widehat{T}}:\Sigma \longrightarrow \Sigma$ given by
 \[\mathcal{\widehat{T}}(x) = \overline{\lambda}\left[\begin{array}{cc}
  \lambda    & \left<\boldsymbol{\cdot},U^{-1}(a')\right>\\
    0  & U \\
 \end{array}\right]\left[\begin{array}{c}
  \left<x,e\right>\\
  x'\\
 \end{array}\right]+\left[\begin{array}{c}
    s\\
 a'\\
 \end{array}\right].\]
 \end{proposition}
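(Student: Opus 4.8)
The plan is a direct computation: apply $\widehat{T}$ to the standard lift of a point of $\Sigma$ and normalise the last coordinate. Recall from the Cayley computation preceding Remark~\ref{infinity} that $D^{-1}(x,1)$ is a nonzero scalar multiple of $(\mathcal{D}(x),1)$, so every point $x\in\Sigma$ has $(x,1)\in H\oplus\mathbb{C}$ as a lift; moreover, by \eqref{eq:11}, $\widehat{\mathcal{Q}}(x,1)=-2\,\mathrm{Re}\,\langle x,e\rangle+\|x'\|^{2}<0$ precisely because $x\in\Sigma$, so $(x,1)$ indeed lies in the negative cone of $\widehat{\mathcal{Q}}$. Since $\widehat{T}\in\widehat{G}$ is a linear isometry of $\widehat{\mathcal{A}}$ it permutes the negative vectors, hence the induced projective map on $\Sigma$ is well defined; as $\widehat{T}(e,0)=\lambda(e,0)$ it fixes $\infty$, and it is its restriction to $\Sigma$ that we must identify.

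First I would write $x=\langle x,e\rangle e+x'$ and apply the block matrix \eqref{eq:1} to the column $\big(\langle x,e\rangle,\, x',\, 1\big)$ in the decomposition $H\oplus\mathbb{C}=\langle e\rangle\oplus\langle e\rangle^{\perp}\oplus\mathbb{C}$, obtaining
\[
\widehat{T}(x,1)=\Big(\lambda\langle x,e\rangle+\langle x',U^{-1}(a')\rangle+\mu s\Big)e+\big(U(x')+\mu a'\big)+\mu(0,1),
\]
a vector whose last coordinate is $\mu$. By Proposition~\ref{l} one has $\lambda\overline{\mu}=1$, so $\mu\neq0$ and $\mu^{-1}=\overline{\lambda}$; dividing through by $\mu$ therefore rewrites $\widehat{T}(x,1)$ as $\big(\widehat{\mathcal{T}}(x),1\big)$ with
\[
\widehat{\mathcal{T}}(x)=\overline{\lambda}\Big(\lambda\langle x,e\rangle+\langle x',U^{-1}(a')\rangle\Big)e+\overline{\lambda}\,U(x')+s\,e+a'.
\]
Collecting the $\langle e\rangle$-component and the $\langle e\rangle^{\perp}$-component separately and using $\overline{\lambda}\lambda=|\lambda|^{2}$, this is exactly the block expression for $\widehat{\mathcal{T}}$ displayed in the statement; as it consists of a bounded linear part plus the constant vector $(s,a')$ it is affine, and holomorphicity is then immediate. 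It remains only to record that $\widehat{\mathcal{T}}$ maps $\Sigma$ bijectively onto $\Sigma$, which is already covered by the first paragraph.

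There is no genuine obstacle here: the argument is pure bookkeeping with the block form \eqref{eq:1}, the lift convention $(x,1)$ for points of $\Sigma$, and the scalar identity $\mu^{-1}=\overline{\lambda}$. The only place deserving a sentence of care is well-definedness of the induced projective map, i.e.\ its independence of the choice of homogeneous representative; this is exactly why one fixes the normalisation in which the last coordinate equals $1$ and records $\mu\neq0$ before reading off $\widehat{\mathcal{T}}$.
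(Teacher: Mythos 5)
Your proposal is correct and follows essentially the same route as the paper: apply $\widehat{T}$ to the lift $(x,1)$ of $x\in\Sigma$, use $\lambda\overline{\mu}=1$ (so $\mu^{-1}=\overline{\lambda}$) to normalise the last coordinate, and read off the affine formula, noting $\langle x',U^{-1}(a')\rangle=\langle U(x'),a'\rangle$. The extra remarks on well-definedness and on $\widehat{T}$ preserving the negative cone are consistent with what the paper leaves implicit.
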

 \begin{proof}
 Since the $\mathbb{C}$-component of a time-like vector with respect to $\w{Q}$ is non-vanishing, so without loss of generality, let $(x,1)$ be a time-like vector  with respect  to $\w{Q}$, i.e.   $x \in \Sigma$. Then $\widehat{T}(x,1)=\bigg(\Big(\lambda\left<x,e\right>+\left<U(x'),a'\right>+\mu s\Big)e+U(x')+\mu a',\mu\bigg)=\mu\Bigg(\bigg(|\lambda|^2\left<x,e\right>+\overline{\lambda}\left<U(x'),a'\right>+s\bigg)e+\overline{\lambda}U(x')+a',1\Bigg)$.  
 Hence the projective map say $\mathcal{\widehat{T}}:\Sigma \longrightarrow \Sigma$, induced by $\widehat{T}$ is given by
 \begin{align*}
 \mathcal{\widehat{T}}(x)&=\bigg(|\lambda|^2\left<x,e\right>+\overline{\lambda}\left<U(x'),a'\right>+s\bigg)e+\overline{\lambda}U(x')+a'\\
 &=\overline{\lambda}\left[\begin{array}{cc}
  \lambda    & \left<\boldsymbol{\cdot},U^{-1}(a')\right>\\
    0  & U \\
 \end{array}\right]\left[\begin{array}{c}
  \left<x,e\right>\\
  x'\\
 \end{array}\right]+\left[\begin{array}{c}
    s\\
 a'\\
 \end{array}\right].
 \end{align*}
 \end{proof}

 Now we compute the spectrum of the isometry obtained in Proposition \ref{l}.
\begin{lemma}\label{l3}
Let $\widehat{T} \in \widehat{G}_\infty$ be of the form (\ref{eq:1}) such that $s \neq 0$.  Then $\sigma(\widehat{T})\setminus \{\lambda,\mu\}=\sigma(U)\setminus \{\lambda,\mu\}.$
\end{lemma}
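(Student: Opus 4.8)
The plan is to read the spectrum of $\widehat{T}$ off of its block upper-triangular form with respect to the orthogonal splitting $H\oplus\mathbb{C}=\langle e\rangle\oplus\langle e\rangle^{\perp}\oplus\mathbb{C}$. In these coordinates $\widehat{T}$ has diagonal blocks $\lambda\,I_{\langle e\rangle}$, $U$ and $\mu\,I_{\mathbb{C}}$, and the entries strictly above the diagonal are $\langle\boldsymbol{\cdot},U^{-1}(a')\rangle$, $\mu a'$ and $\mu s$ (the precise value of $s$, and in particular the hypothesis $s\neq 0$, plays no role in what follows). I would prove the two inclusions of the asserted identity separately, recalling that $U$ is unitary so $\sigma(U)\subseteq S^{1}$.

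For the inclusion $\sigma(\widehat{T})\setminus\{\lambda,\mu\}\subseteq\sigma(U)\setminus\{\lambda,\mu\}$ it suffices to prove $\sigma(\widehat{T})\subseteq\{\lambda,\mu\}\cup\sigma(U)$. Fix $\nu\notin\{\lambda,\mu\}\cup\sigma(U)$ and solve $(\widehat{T}-\nu I)(v_{1}e+v',v_{3})=(w_{1}e+w',w_{3})$ by back-substitution along the triangular structure: first $v_{3}=(\mu-\nu)^{-1}w_{3}$, then $v'=(U-\nu I)^{-1}(w'-\mu a'v_{3})$, then $v_{1}=(\lambda-\nu)^{-1}\big(w_{1}-\langle v',U^{-1}(a')\rangle-\mu s v_{3}\big)$. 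Each step is legitimate precisely because $\nu\neq\mu$, $\nu\notin\sigma(U)$ and $\nu\neq\lambda$ respectively, and the assignment $(w_{1}e+w',w_{3})\mapsto(v_{1}e+v',v_{3})$ is a bounded (two-sided) inverse of $\widehat{T}-\nu I$. Hence $\nu\notin\sigma(\widehat{T})$, which gives the inclusion; this is just the standard fact that the spectrum of an operator-matrix in triangular form is contained in the union of the spectra of its diagonal blocks.

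For the reverse inclusion $\sigma(U)\setminus\{\lambda,\mu\}\subseteq\sigma(\widehat{T})$, take $\nu\in\sigma(U)$ with $\nu\notin\{\lambda,\mu\}$. The crucial input is that $U$ is unitary, hence normal, so its spectrum equals its approximate point spectrum; thus there are unit vectors $v'_{n}\in\langle e\rangle^{\perp}$ with $\|(U-\nu I)v'_{n}\|\to 0$. Put $v_{n}=\big(-(\lambda-\nu)^{-1}\langle v'_{n},U^{-1}(a')\rangle\,e+v'_{n},\,0\big)$, which makes sense since $\nu\neq\lambda$. Applying the matrix form of $\widehat{T}$, the $\langle e\rangle$-component of $(\widehat{T}-\nu I)v_{n}$ is arranged to cancel, so $(\widehat{T}-\nu I)v_{n}=\big((U-\nu I)v'_{n},\,0\big)\to 0$, while at the same time $1=\|v'_{n}\|\le\|v_{n}\|\le\big(1+\|a'\|^{2}|\lambda-\nu|^{-2}\big)^{1/2}$, using that $U$ is isometric. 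Therefore $\widehat{T}-\nu I$ is not bounded below, hence not invertible, so $\nu\in\sigma(\widehat{T})\setminus\{\lambda,\mu\}$. Combining the two inclusions gives the lemma.

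I expect the reverse inclusion to be the only delicate point: one must resist concluding anything directly from $\nu\in\sigma(U)$ and instead genuinely use normality of $U$ to manufacture approximate eigenvectors, and then check that the correction added in the $\langle e\rangle$-direction keeps the lifted sequence both uniformly bounded and bounded away from $0$ — which is exactly where the condition $\nu\neq\lambda$ is used. (If one prefers, the same estimates show directly that $\widehat{T}-\nu I$ is invertible for $\nu\notin\{\lambda,\mu\}\cup\sigma(U)$ and fails to be invertible for $\nu\in\sigma(U)\setminus\{\lambda\}$, which is the two-sided statement packaged above.)
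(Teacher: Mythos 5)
Your proof is correct, but half of it runs along a genuinely different track from the paper. The easy inclusion $\sigma(\widehat{T})\subseteq\{\lambda,\mu\}\cup\sigma(U)$ is handled the same way in both places: back-substitution along the triangular block structure, which is exactly the paper's argument that bijectivity of $U-rI$ forces bijectivity of $\widehat{T}-rI$. For the reverse inclusion the paper stays purely algebraic: it shows that if $\widehat{T}-rI$ is bijective then so is $U-rI$, transferring injectivity by lifting a kernel vector $x'$ of $U-rI$ to the kernel vector $\bigl(\tfrac{\langle x',U^{-1}(a')\rangle}{r-\lambda}e+x',0\bigr)$ of $\widehat{T}-rI$, and transferring surjectivity by solving $(\widehat{T}-rI)(x,z)=(y',0)$ and reading off $(U-rI)(x')=y'$ from $z=0$; this gives the clean equivalence ``$\widehat{T}-rI$ bijective iff $U-rI$ bijective'' for every $r\notin\{\lambda,\mu\}$ and never uses unitarity of $U$ at this step. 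You instead invoke that $U$ is unitary, hence normal, so $\sigma(U)$ coincides with its approximate point spectrum, lift the approximate eigenvectors with the corrective $e$-component $-(\lambda-\nu)^{-1}\langle v'_n,U^{-1}(a')\rangle$, and conclude that $\widehat{T}-\nu I$ is not bounded below; your injectivity-transfer direction is in effect the same kernel-vector computation as the paper's, just run asymptotically. Both arguments are sound (your norm bounds and the cancellation in the $e$-component check out); the paper's version is more elementary and works for an arbitrary bounded diagonal block, while yours buys the extra information that such $\nu$ lie in the approximate point spectrum of $\widehat{T}$, at the cost of a standard spectral-theoretic fact. Your observation that $s\neq 0$ is irrelevant here is also consistent with the paper, where that hypothesis is only used later (Proposition \ref{m}) to show that $\mu$ is an eigenvalue.
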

\begin{proof}
We will show that for any  $r \in \mathbb{C} \setminus \{\lambda, \mu \}$, $\widehat{T}-rI$ is bijective if and only if $U-rI$ is bijective.
Let $r \in \mathbb{C} \setminus \{\lambda, \mu \}$. If $U-rI$ is one-one, $\widehat{T}-rI$ is one-one, for \\
 $(\widehat{T}-r I)(x,z)=\left[\begin{array}{ccc}
  \lambda-r    & \left<\boldsymbol{\cdot},U^{-1}(a')\right> &   \mu s\\
    0  & U-r I & \mu a'\\
    0 & 0 & \mu-r
 \end{array}\right]\left[\begin{array}{c}
 \left<x,e\right>\\
 x'\\
 z
 \end{array}\right]
 \equiv 0$ implies $z=0$, $x'=0$ as $U-rI$ is injective and hence $\left<x,e\right>=0$. Conversely, suppose $\widehat{T}-rI$ is one-one and $(U-rI)(x')=0$ for some $x' \in \left<e\right>^{\perp}$. We have \\
 $(\widehat{T}-r I)\left({\dfrac{\left<x',U^{-1}(a')\right>}{r-\lambda}}e+x',0\right)=\begin{bmatrix}
  \lambda-r    & \left<\boldsymbol{\cdot},U^{-1}(a')\right> &   \mu s\\
    0  & U-r I & \mu a'\\
    0 & 0 & \mu-r
 \end{bmatrix}\begin{bmatrix}
\dfrac{\left<x',U^{-1}(a')\right>}{r-\lambda} \\
 x'\\
 0
 \end{bmatrix}
 \equiv 0$ which implies $x'=0$. Thus $U-rI$ is one-one.
Now, suppose $U-rI$ is onto. We will see that $\widehat{T}-rI$ is onto. Let $\Big(\left<y,e\right>e+y',w\Big) \in H \oplus  \mathbb{C}$. Since $U-rI$ is onto, for $y'-\dfrac{\mu wa'}{\mu-r} \in \left<e\right>^{\perp}$, there exists $x' \in \left<e\right>^{\perp}$ such that $(U-rI)(x')=y'-\dfrac{\mu wa'}{\mu-r}$. Let $\left<x,e\right>=\dfrac{\left<y,e\right>-\dfrac{\mu sw}{\mu-r}-\left<x',U^{-1}(a')\right>}{\lambda-r}$. With this choice, consider\\
 $(\widehat{T}-r I)\left(x,\dfrac{w}{\mu-r}\right)=\left[\begin{array}{ccc}
  \lambda-r    & \left<\boldsymbol{\cdot},U^{-1}(a')\right> &   \mu s\\
    0  & U-r I & \mu a'\\
    0 & 0 & \mu-r
 \end{array}\right]\left[\begin{array}{c}
\left<x,e\right> \\
 x'\\
\dfrac{w}{\mu-r} \\
 \end{array}\right]=\\
 \Bigg(\bigg((\lambda-r)\left<x,e\right>+\left<x',U^{-1}(a')\right>+\dfrac{\mu sw}{\mu-r}\bigg)e+(U-rI)(x')+\dfrac{\mu wa'}{\mu-r},w\Bigg)=\Big(\left<y,e\right>e+y',w\Big)$. Conversely, let  $\widehat{T}-rI$ be surjective and $y' \in \left<e\right>^{\perp}$. Then $(y',0) \in H \oplus  \mathbb{C}$ and by surjectivity of $\widehat{T}-rI$ there exists $(x,z)=\Big(\left<x,e\right>e+x',z\Big) \in H \oplus \mathbb{C}$ such that
 \begin{align*}
 (\widehat{T}-r I)(x,z)&=\left[\begin{array}{ccc}
  \lambda-r    & \left<\boldsymbol{\cdot},U^{-1}(a')\right> &  \mu s\\
    0  & U-r I & \mu a'\\
    0 & 0 & \mu-r
 \end{array}\right]\left[\begin{array}{c}
 \left<x,e\right>\\
 x'\\
 z
 \end{array}\right]\\
 &=\left[\begin{array}{c}
 (\lambda-r)\left<x,e\right>+\left<x',U^{-1}(a')\right>+\mu sz\\
 (U-rI)x'+\mu a'z\\
 (\mu-r)z
 \end{array}\right]=\left[\begin{array}{c}
 0\\
 y'\\
 0
 \end{array}\right].
 \end{align*}
  This implies $z=0$ ($\mu \neq r$) and hence $(U-rI)(x')=y'$ making $U-rI$  surjective. This  proves the lemma.

\end{proof}
 \begin{proposition} \label{m}
If $\widehat{T} \in \widehat{G}_\infty$ is of the form (\ref{eq:1}) such that $s \neq 0$, then $\sigma(\widehat{T})=\{\lambda, \mu\} \cup \sigma(U)$.
 \end{proposition}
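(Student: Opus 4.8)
The plan is to combine Lemma \ref{l3} with a direct analysis of the two special scalars $\lambda$ and $\mu$, and then check that nothing outside $\{\lambda,\mu\}\cup\sigma(U)$ can lie in the spectrum. By Lemma \ref{l3} we already know $\sigma(\widehat{T})\setminus\{\lambda,\mu\}=\sigma(U)\setminus\{\lambda,\mu\}$, so it remains to show $\lambda,\mu\in\sigma(\widehat{T})$, which gives the inclusion $\{\lambda,\mu\}\cup\sigma(U)\subseteq\sigma(\widehat{T})$, and together with Lemma \ref{l3} the reverse inclusion is automatic. Thus the whole proposition reduces to: \emph{$\lambda$ and $\mu$ are always spectral values of $\widehat{T}$}.

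First I would handle $\lambda$: the vector $(e,0)$ is a genuine eigenvector with $\widehat{T}(e,0)=\lambda(e,0)$ by construction (Proposition \ref{l} — indeed that was the defining property of $\widehat{G}_\infty$), so $\lambda\in\sigma(\widehat{T})$ trivially. For $\mu$ the argument must be more careful, and this is where the hypothesis $s\neq 0$ enters. Looking at the block form (\ref{eq:1}), the third row shows that the quotient action on the $\mathbb{C}$-coordinate is multiplication by $\mu$, so $\mu$ is an eigenvalue of the induced operator on $(H\oplus\mathbb{C})/(\langle e\rangle\oplus\langle e\rangle^\perp)$; hence $\widehat{T}-\mu I$ cannot be surjective unless its range already contains a vector with nonzero last coordinate mapping appropriately — more precisely, I would show $\widehat{T}-\mu I$ is not bounded below or not surjective directly. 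The cleanest route: attempt to solve $(\widehat{T}-\mu I)(x,z)=(0,1)$ i.e. find a preimage of $(0,0,1)$; the last row forces $(\mu-\mu)z=1$, impossible, so $(0,1)\notin\mathrm{ran}(\widehat{T}-\mu I)$ and $\mu\in\sigma(\widehat{T})$. (Note this works regardless of $s$; the role of $s\neq 0$ is really to exclude the degenerate case $\lambda=\mu$ where $\widehat{T}$ is non-parabolic, cf. Corollary \ref{c6}, and to make the $\{\lambda,\mu\}$ contribution genuinely two points when $|\lambda|\neq 1$.)

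Finally, for the reverse inclusion $\sigma(\widehat{T})\subseteq\{\lambda,\mu\}\cup\sigma(U)$: take $r\notin\{\lambda,\mu\}\cup\sigma(U)$. Then $r\notin\{\lambda,\mu\}$ and $U-rI$ is bijective, so by Lemma \ref{l3}'s proof (which established the two-sided equivalence ``$\widehat{T}-rI$ bijective $\iff$ $U-rI$ bijective'' for every $r\notin\{\lambda,\mu\}$) we get $\widehat{T}-rI$ bijective, hence $r\notin\sigma(\widehat{T})$. Combining with the previous paragraph yields $\sigma(\widehat{T})=\{\lambda,\mu\}\cup\sigma(U)$.

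I do not expect a serious obstacle here; the proposition is essentially a bookkeeping corollary of Lemma \ref{l3} once one observes the two eigen/quotient-eigen vectors $(e,0)$ and (the class of) $(0,1)$ witnessing $\lambda,\mu\in\sigma(\widehat{T})$. The only mild subtlety is making sure the argument for $\mu\in\sigma(\widehat{T})$ is phrased in terms of failure of surjectivity rather than existence of an eigenvector, since $\widehat{T}-\mu I$ need not have a kernel — exhibiting $(0,1)$ outside its range is the clean fix. If one prefers an eigenvector argument, one can instead note that $\widehat{T}^{*}$ (adjoint with respect to the ambient Hilbert inner product, or equivalently the $\widehat{\mathcal{A}}$-adjoint which relates $\widehat{T}$ to $\widehat{T}^{-1}$ via $\widehat{A'}$) has $(0,1)$ as an eigenvector with eigenvalue $\overline{\mu}$, whence $\mu\in\sigma(\widehat{T})$ since the spectrum is invariant under adjoint-conjugation; but the range argument is shorter and avoids invoking adjoints.
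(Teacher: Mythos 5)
Your proposal is correct, and its skeleton is the same as the paper's: Lemma \ref{l3} reduces the proposition to showing $\lambda,\mu\in\sigma(\widehat{T})$, with $\lambda$ witnessed by the eigenvector $(e,0)$. The only real divergence is how you handle $\mu$. The paper proves the stronger statement that $\mu$ is an \emph{eigenvalue}: when $\mu\neq\lambda$, Remark \ref{R2} gives $|\mu|\neq 1$, so $U-\mu I$ is invertible (as $\sigma(U)\subseteq S^1$) and one solves explicitly for a vector $\bigl(\left<x,e\right>e+x',z\bigr)$ with $z\neq 0$ in $\ker(\widehat{T}-\mu I)$. You instead note that the last row of $\widehat{T}-\mu I$ vanishes, so $(0,1)$ has no preimage and $\widehat{T}-\mu I$ is not surjective; this is shorter, needs no case distinction on $\lambda=\mu$ and no appeal to unitarity of $U$, and fully suffices for the stated set equality. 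What the paper's computation buys is the eigenvector itself, which is reused downstream (the proof of Lemma \ref{l4} takes ``the eigenvector corresponding to $\mu$'' directly from this argument), so your cautionary remark that $\widehat{T}-\mu I$ ``need not have a kernel'' never actually occurs here: for $\mu\neq\lambda$ the kernel is nontrivial by the paper's construction, and for $\mu=\lambda$ it contains $(e,0)$. One side comment is slightly off: $s\neq 0$ does not exclude $\lambda=\mu$ (Heisenberg translations have $\lambda=\mu$ with $s\neq 0$); the hypothesis is there to match Lemma \ref{l3}, which you invoke with it satisfied, so this inaccuracy does not affect the validity of your proof.
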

 \begin{proof}
Clearly, $\lambda \in \sigma(\widehat{T})$. For $\mu \neq \lambda$, we will see that $\mu$ is also an eigenvalue of $\widehat{T}$, i.e.  ${\text{ker}}(\widehat{T}-\mu I) \neq \{0\}$.
 \[(\widehat{T}-\mu I)(x,z)=\left[\begin{array}{ccc}
  \lambda-\mu    & \left<\boldsymbol{\cdot},U^{-1}(a')\right> &   \mu s\\
    0  & U-\mu I & \mu a'\\
    0 & 0 & 0\\
 \end{array}\right]\left[\begin{array}{c}
 \left<x,e\right>\\
 x'\\
 z\\
 \end{array}\right]
 =0\] implies
 \begin{eqnarray}
   (\lambda-\mu)\left<x,e\right>+ \left<x',U^{-1}(a')\right>+\mu sz=0\label{eq:2}
 \end{eqnarray}
 and
 \begin{eqnarray}
 (U-\mu I)(x')+\mu a'z=0.\label{eq:3}
 \end{eqnarray}
  Remark \ref{R2} tells that $\mu \notin S^1$ and hence $U-\mu I$ is invertible as $\sigma(U) \subseteq S^1$.  So by (\ref{eq:3}),\\
 \[x'=-\mu z(U-\mu I)^{-1}(a')\]
 and then (\ref{eq:2}) yields that 
 $\left<x,e\right>=\dfrac{\mu sz+\left<x',U^{-1}(a')\right>}{\mu-\lambda}.$
 If we choose $z \neq 0$ then we always get a non-zero vector $\Big(\left<x,e\right>e+x',z\Big) \in {\text{ker}}(\widehat{T}-\mu I)$ as $s \neq 0$. This yields
   $\sigma(\w{T})=\bigg(\sigma(\w{T})\setminus \{\lambda,\mu\}\bigg)
 \cup \{\lambda,\mu\}=\bigg(\sigma(U)\setminus \{\lambda,\mu\}\bigg)\cup
 \{\lambda,\mu\}=
 \sigma{(U)} \cup
 \{\lambda,\mu\} $ using Lemma \ref{l3}.

\end{proof}

 \subsection{Parabolic isometries}
 Observe that if $\widehat{T} \in \widehat{G}$ is a parabolic isometry of the form (\ref{eq:1}), then $s \neq 0$ in view of Corollary \ref{c6}.
 \begin{lemma}\label{l4}
 If $\widehat{T}$ satisfying (\ref{eq:1}) is a parabolic isometry then $\lambda =\mu$.
 \end{lemma}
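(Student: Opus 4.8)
The plan is a proof by contradiction based on counting fixed points. Since $\widehat{T}$ is parabolic, Corollary \ref{c6} forces $s\neq 0$, so Proposition \ref{m} applies and gives $\sigma(\widehat{T})=\{\lambda,\mu\}\cup\sigma(U)$; in particular $\mu\in\sigma(\widehat{T})$. By Remark \ref{R2}, proving $\lambda=\mu$ is equivalent to proving $|\lambda|=1$, so I would suppose $|\lambda|\neq 1$, whence $\mu=1/\overline{\lambda}\neq\lambda$ and $|\mu|=|\lambda|^{-1}\neq 1$.

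Under this assumption I would manufacture a fixed point of the induced isometry $\widehat{\mathcal{T}}$ on $\partial\Sigma$ other than $\infty$. Since $\mu\neq\lambda$ and $s\neq 0$, the computation in the proof of Proposition \ref{m} exhibits a nonzero $v\in\ker(\widehat{T}-\mu I)$ whose last coordinate may be taken nonzero; fix such a $v$ and normalize it to $(x,1)$. Because $\widehat{T}$ preserves $\widehat{\mathcal{A}}$ and $\widehat{T}v=\mu v$, we get $\widehat{\mathcal{Q}}(v)=|\mu|^{2}\widehat{\mathcal{Q}}(v)$, and $|\mu|\neq 1$ forces $\widehat{\mathcal{Q}}(x,1)=0$; but $\widehat{\mathcal{Q}}(x,1)=0$ is exactly the relation $\text{Re}\,\langle x,e\rangle=\frac{1}{2}\|x'\|^{2}$ cutting out $\partial\Sigma$, so $x\in\partial\Sigma$ is a fixed point of $\widehat{\mathcal{T}}$. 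On the other hand $(e,0)$ is, by the form (\ref{eq:1}), a light-like eigenvector of $\widehat{T}$ with eigenvalue $\lambda$, yielding the fixed point $\infty$, which is distinct from the finite point $x$. Thus $\widehat{\mathcal{T}}$ has at least two fixed points in $\overline{\Sigma}$, contradicting that a parabolic isometry has exactly one. Hence $|\lambda|=1$ and $\lambda=\mu$.

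The step needing care is the production of the second fixed point: one must read off from the explicit kernel element in the proof of Proposition \ref{m} that $v$ can be chosen with nonvanishing last coordinate (this is precisely where $s\neq 0$ enters), and then check that the light-like relation for $(x,1)$ is the defining equation of $\partial\Sigma$, so that $x$ is a genuine fixed point of $\widehat{\mathcal{T}}$ in $\overline{\Sigma}$. The remaining ingredients — that an isometry sends an eigenvector for an eigenvalue off the unit circle to a $\widehat{\mathcal{Q}}$-null vector, and the dictionary between fixed points of $\widehat{\mathcal{T}}$ in $\overline{\Sigma}$ and light-like eigenvectors of $\widehat{T}$ — are routine.
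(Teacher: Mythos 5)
Your proof is correct and uses essentially the same ingredients as the paper: assume $\lambda\neq\mu$, invoke Corollary \ref{c6} and Proposition \ref{m} to obtain a $\mu$-eigenvector $w$, and exploit the identity $\widehat{\mathcal{A}}(w,w)=|\mu|^{2}\widehat{\mathcal{A}}(w,w)$ together with Remark \ref{R2}. The only difference is where the contradiction is drawn: the paper uses parabolicity to rule out time-like and light-like $w$, so $w$ is space-like and $|\mu|=1$ contradicts Remark \ref{R2}, whereas you use $|\mu|\neq 1$ to force $w$ light-like and contradict parabolicity via a second boundary fixed point --- a contrapositive rearrangement of the same argument, with the fixed-point dictionary made slightly more explicit.
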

 \begin{proof}
 We will prove the result by contradiction. Suppose $\lambda \neq \mu$. As proof of Proposition \ref{m} yields that $\mu$ is an eigenvalue ($s \neq 0$, Corollary \ref{c6}), let the corresponding eigenvector be $w$. Clearly, $w$ cannot be time-like or light-like eigenvector as it would contradict the parabolic nature of $\widehat{T}$. So, $w$ is a space-like vector. Also, $\mathcal{\widehat{A}}(w,w)=\mathcal{\widehat{A}}\Big(\widehat{T}(w),\widehat{T}(w)\Big)=|\mu|^2\mathcal{\widehat{A}}(w,w)$. Eigenvector $w$ being space-like gives $|\mu|=1$  which is a contradiction in view of Remark \ref{R2}.
 \end{proof}
 We shall record the preceding analysis in the following proposition.

 \begin{proposition}\label{p13}
 Spectrum of a parabolic isometry is contained in the unit circle.
 \end{proposition}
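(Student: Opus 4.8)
The plan is to reduce the statement to the previously established case analysis on $\widehat{G}_\infty$ by first normalizing the parabolic isometry so that it fixes $\infty$. Given a parabolic isometry $\widehat{S} \in \widehat{G}$, it has exactly one fixed point on the boundary $\partial\Sigma \cup \{\infty\}$; using the bi-transitivity of $Aut(\Sigma)$ on the boundary (which transfers from Lemma \ref{bitransitive} via the Cayley map, or equivalently from the transitivity of $\widehat{G}$ on light-like lines), I would conjugate $\widehat{S}$ by a suitable element of $\widehat{G}$ so that its unique fixed point becomes $\infty$. Conjugation preserves the spectrum and preserves the parabolic type, so it suffices to prove $\sigma(\widehat{T}) \subseteq S^1$ for a parabolic $\widehat{T} \in \widehat{G}_\infty$.

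Once $\widehat{T}$ is in $\widehat{G}_\infty$, it has the form (\ref{eq:1}) with parameters $\lambda, \mu, U, a', s$ where $\lambda\overline{\mu} = 1$ and $U \in \mathcal{U}(\left<e\right>^{\perp})$. Since $\widehat{T}$ is parabolic, Corollary \ref{c6} forces $s \neq 0$, and Lemma \ref{l4} then gives $\lambda = \mu$. Combined with $\lambda\overline{\mu} = 1$ this yields $|\lambda|^2 = 1$, i.e.\ $\lambda = \mu \in S^1$. Now invoke Proposition \ref{m}: since $s \neq 0$, we have $\sigma(\widehat{T}) = \{\lambda, \mu\} \cup \sigma(U)$. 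The eigenvalues $\lambda = \mu$ lie on $S^1$, and $\sigma(U) \subseteq S^1$ because $U$ is a unitary operator on the Hilbert space $\left<e\right>^{\perp}$. Therefore $\sigma(\widehat{T}) \subseteq S^1$, and transporting back along the conjugation gives the result for the original $\widehat{S}$.

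The only genuine subtlety is the normalization step: I must be sure that an arbitrary parabolic $\widehat{S} \in \widehat{G}$ is conjugate within $\widehat{G}$ to an element of $\widehat{G}_\infty$. This follows because a parabolic isometry has a light-like fixed vector (its unique boundary fixed point lifts to a light-like eigenvector), and $\widehat{G}$ acts transitively on the set of light-like lines in $H \oplus \mathbb{C}$ — this transitivity is exactly what the Cayley-transported version of Lemma \ref{bitransitive} provides, or can be seen directly via Witt's theorem (Theorem \ref{wt}) by sending a given light-like line to $\left<(e,0)\right>$. Conjugating by the corresponding element of $\widehat{G}$ moves the fixed point to $\infty$, placing the conjugate in $\widehat{G}_\infty$; the rest is the routine chain of citations above. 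I expect this fixed-point-transport argument to be the main (and essentially the only) point requiring care, everything else being an immediate appeal to Corollary \ref{c6}, Lemma \ref{l4}, Proposition \ref{m}, and unitarity of $U$.
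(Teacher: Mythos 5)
Your proposal is correct and follows essentially the same route as the paper: parabolicity forces $s \neq 0$ (Corollary \ref{c6}) and $\lambda = \mu$ (Lemma \ref{l4}), so Proposition \ref{m}, Remark \ref{R2} and the unitarity of $U$ give $\sigma(\widehat{T}) \subseteq S^1$. The only difference is that you make explicit the conjugation of an arbitrary parabolic isometry into $\widehat{G}_\infty$ (via Lemma \ref{bitransitive}, or Witt's theorem, together with invariance of the spectrum under conjugation), a normalization the paper leaves implicit; this is a harmless, indeed welcome, addition.
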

 \begin{proof}
 For a parabolic isometry $\widehat{T} \in \widehat{G}$, Proposition \ref{m} and Lemma \ref{l4}   yield $\sigma({\widehat{T}})=\{\lambda\} \cup \sigma(U)$.  Hence $\sigma(\widehat{T}) \subseteq S^1$  by Remark \ref{R2}.
 \end{proof}
 Next we have a spectral characterization of hyperbolic isometries.
 \begin{proposition}\label{p9}
  An isometry is hyperbolic if and only if its spectrum is not contained in the unit circle.
 \end{proposition}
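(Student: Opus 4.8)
The plan is to read the statement off from the trichotomy of dynamical types together with the three spectral facts already in hand. First I would recall that every element of $Aut(B)$, hence via the homomorphism $\phi$ of Theorem \ref{a} every isometry of $G$, and via conjugation by the unitary $D$ every isometry of $\widehat{G}$, is exactly one of elliptic, hyperbolic or parabolic — the three classes being mutually exclusive by their very definitions (a fixed point inside $B$, versus two, versus one fixed point on $\partial B$). Since conjugation by an invertible operator preserves the spectrum, and conjugation by $D$ matches up the dynamical-type classifications on $G$ and $\widehat{G}$, I would carry out the argument only for isometries $T \in G$; the $\widehat{G}$ case is then identical.

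For the forward direction, I would assume $T$ is hyperbolic and invoke Theorem \ref{t1}(2): the spectrum of $T$ contains an eigenvalue of the form $re^{i\theta}$ with $r>0$, $r\neq 1$, and such a value is not on $S^1$, so $\sigma(T)\not\subseteq S^1$. For the converse, assuming $\sigma(T)\not\subseteq S^1$, I would eliminate the other two types: $T$ cannot be elliptic, since Theorem \ref{e}(2) forces $\sigma(T)\subseteq S^1$ for elliptic isometries; and $T$ cannot be parabolic, since Proposition \ref{p13} (carried over to $G$ by conjugating with $D$) again forces $\sigma(T)\subseteq S^1$. By the trichotomy, $T$ must then be hyperbolic, which finishes the proof.

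I do not expect a genuine obstacle here: the real content is already encapsulated in Theorems \ref{e} and \ref{t1} and in Proposition \ref{p13}, so the argument is a short assembly. The only point requiring a little care is the bookkeeping when passing between the groups $G$ and $\widehat{G}$ through the unitary $D$ — one must note explicitly that both the spectrum and the dynamical type are invariants of that conjugacy, after which the reasoning applies uniformly to isometries of either group.
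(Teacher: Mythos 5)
Your proposal is correct and follows essentially the same route as the paper: the paper's proof is exactly the assembly of Theorem \ref{e}(2), Theorem \ref{t1}(2) and Proposition \ref{p13} via the elliptic/hyperbolic/parabolic trichotomy. Your extra remark about transporting spectrum and dynamical type between $G$ and $\widehat{G}$ through the unitary $D$ is a reasonable explicit bookkeeping step that the paper leaves implicit.
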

 \begin{proof}
 Theorem \ref{e} (2), Theorem \ref{t1} (2) and Proposition \ref{p13} yield the required result.
 \end{proof}
From now onwards, we will investigate parabolic isometries having singleton spectra.
 \begin{lemma}\label{l11}
 Let $\widehat{T}$ be a parabolic isometry in $\widehat{G}_\infty$ having singleton spectrum (see (\ref{eq:1})). Then  $\widehat{T}=\lambda\left[\begin{array}{ccc}
  1    & \left<\boldsymbol{\cdot},a'\right> &   s\\
    0  &  I & a'\\
    0 & 0 & 1
 \end{array}\right]$ where $s \neq 0$, $|\lambda|=1$ and $\text{Re}\,s=\dfrac{1}{2}\|a'\|^2$.
 \end{lemma}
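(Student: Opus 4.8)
The plan is to start from the canonical form of a general element of $\widehat{G}_\infty$ furnished by Proposition \ref{l},
\[\widehat{T}=\left[\begin{array}{ccc} \lambda & \left<\boldsymbol{\cdot},U^{-1}(a')\right> & \mu s\\ 0 & U & \mu a'\\ 0 & 0 & \mu \end{array}\right],\qquad \lambda\overline{\mu}=1,\quad U\in\mathcal{U}(\left<e\right>^{\perp}),\quad \text{Re}\,s=\tfrac{1}{2}\|a'\|^2,\]
and to show that the two hypotheses — $\widehat{T}$ parabolic and $\sigma(\widehat{T})$ a single point — force $\mu=\lambda$, $|\lambda|=1$, $U=\lambda I$ and $s\neq 0$, after which pulling the common scalar $\lambda$ out of every entry yields the asserted shape. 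First I would record that, since $\widehat{T}$ is parabolic, Corollary \ref{c6} gives $s\neq 0$ and Lemma \ref{l4} gives $\mu=\lambda$; by Remark \ref{R2} the equality $\mu=\lambda$ is equivalent to $|\lambda|=1$.

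Next, because $s\neq 0$, I would apply Proposition \ref{m} to obtain $\sigma(\widehat{T})=\{\lambda,\mu\}\cup\sigma(U)=\{\lambda\}\cup\sigma(U)$. Since $\widehat{T}(e,0)=\lambda(e,0)$ we have $\lambda\in\sigma(\widehat{T})$, so the assumption that $\sigma(\widehat{T})$ is a singleton forces $\sigma(\widehat{T})=\{\lambda\}$ and hence $\sigma(U)\subseteq\{\lambda\}$; as $\left<e\right>^{\perp}\neq\{0\}$ the spectrum $\sigma(U)$ is nonempty, so $\sigma(U)=\{\lambda\}$ (the degenerate case $\left<e\right>^{\perp}=\{0\}$ being trivial).

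The only genuinely analytic step is then to deduce $U=\lambda I$ from $\sigma(U)=\{\lambda\}$. Here I would invoke that the operator norm of a bounded normal operator equals its spectral radius: $U-\lambda I$ is normal with $\sigma(U-\lambda I)=\sigma(U)-\lambda=\{0\}$, hence $\|U-\lambda I\|=r(U-\lambda I)=0$, so $U=\lambda I$ on $\left<e\right>^{\perp}$. This is the infinite-dimensional substitute for the finite-dimensional fact that a unitary with a single eigenvalue is scalar, and it is the only place where spectral theory enters; I expect this to be the main — and essentially the sole — obstacle, the remainder being bookkeeping.

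Finally I would substitute $\mu=\lambda$, $U=\lambda I$ and $U^{-1}=\overline{\lambda}I$ (using $|\lambda|=1$) into the displayed matrix, note that $\left<\boldsymbol{\cdot},\overline{\lambda}a'\right>=\lambda\left<\boldsymbol{\cdot},a'\right>$ since the inner product is conjugate-linear in its second argument, and factor $\lambda$ out of every entry to get
\[\widehat{T}=\lambda\left[\begin{array}{ccc} 1 & \left<\boldsymbol{\cdot},a'\right> & s\\ 0 & I & a'\\ 0 & 0 & 1 \end{array}\right].\]
The constraint $\text{Re}\,s=\tfrac{1}{2}\|a'\|^2$ is inherited from Proposition \ref{l}, the equality $|\lambda|=1$ was established above, and $s\neq 0$ was noted at the outset; this is precisely the claim.
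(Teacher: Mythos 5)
Your proposal is correct and follows essentially the same route as the paper: use Corollary \ref{c6} to get $s\neq 0$, Proposition \ref{m} (with Lemma \ref{l4} and Remark \ref{R2}) to force $\mu=\lambda$, $|\lambda|=1$ and $\sigma(U)=\{\lambda\}$, and conclude $U=\lambda I$ before factoring out $\lambda$. The only difference is that you explicitly justify the step $\sigma(U)=\{\lambda\}\Rightarrow U=\lambda I$ via normality and the spectral radius, which the paper asserts without comment.
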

 \begin{proof}
   Let $\widehat{T}$ satisfying (\ref{eq:1}) have singleton spectrum. Then  $\sigma(\widehat{T})=\{\lambda\}$. By Proposition \ref{m},  $\sigma(U)=\{\lambda\}$ and hence $U= \lambda\,I$. Corollary \ref{c6} tells that $s \neq 0$. Hence the result follows.
   \end{proof}
   \textit{We will denote an isometry  described in the above lemma  by $(\lambda,a',s)$.}

 \begin{proposition}\label{p8}
 The projective maps induced by parabolic isometries in $\widehat{G}_\infty$ having singleton spectra are Heisenberg translations defined as   $\mathcal{\widehat{T}}_{u} : \Sigma \longrightarrow \Sigma$ such that
 \begin{eqnarray*}\widehat{\mathcal{T}}_u(x)=\Big(\left<x,e\right>+\left<u,e\right>+\left<x',u'\right>\Big)e+(x'+u'),
 \end{eqnarray*}
 $u=\left<u,e\right>e+u' \in \partial{\Sigma}$.
 \end{proposition}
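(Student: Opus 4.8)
The plan is to take a parabolic isometry of $\widehat{G}_{\infty}$ with singleton spectrum, write it in the normal form supplied by Lemma~\ref{l11}, compute the induced projective map, and then recognize the result as a Heisenberg translation. So first I would invoke Lemma~\ref{l11} to say that such an isometry is $(\lambda,a',s)$ with $|\lambda|=1$, $s\neq 0$ and $\text{Re}\,s=\frac12\|a'\|^2$, i.e. it is of the form (\ref{eq:1}) with $U=\lambda I$ and $\mu=\lambda$. Feeding this into Proposition~\ref{p10} and using $|\lambda|^2=1$ to collapse the $\overline{\lambda}$ and $|\lambda|^2$ factors gives $\widehat{\mathcal{T}}(x)=\big(\left<x,e\right>+\left<x',a'\right>+s\big)e+(x'+a')$. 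Equivalently, and perhaps more transparently, I would compute directly: writing the lift $(x,1)$ of a point $x\in\Sigma$ in the coordinates $(\left<x,e\right>,x',1)$ and applying the matrix of $(\lambda,a',s)$ yields $(x,1)\mapsto\lambda\big(\left<x,e\right>+\left<x',a'\right>+s,\ x'+a',\ 1\big)$; since the last coordinate is the nonzero scalar $\lambda$, projectivizing (dividing through by $\lambda$) produces the same affine map.

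Next I would name the boundary point that makes this a Heisenberg translation. Set $u:=se+a'$; because $a'\in\left<e\right>^{\perp}$ and $\|e\|=1$ we get $\left<u,e\right>=s$ and $u'=a'$, and the normalization $\text{Re}\,s=\frac12\|a'\|^2$ coming from Lemma~\ref{l11} is precisely $\text{Re}\,\left<u,e\right>=\frac12\|u'\|^2$, so $u\in\partial\Sigma$ (cf. Remark~\ref{infinity}). Substituting $\left<u,e\right>$ for $s$ and $u'$ for $a'$ in the expression above turns it into $\big(\left<x,e\right>+\left<u,e\right>+\left<x',u'\right>\big)e+(x'+u')=\widehat{\mathcal{T}}_u(x)$, which is exactly the claimed formula. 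That $\widehat{\mathcal{T}}_u$ maps $\Sigma$ to $\Sigma$ needs no extra argument: it is already part of Proposition~\ref{p10}; if desired one can re-derive it from $\text{Re}\,\left<\widehat{\mathcal{T}}_u(x),e\right>=\text{Re}\,\left<x,e\right>+\text{Re}\,s+\text{Re}\,\left<x',a'\right>$ together with the Cauchy--Schwarz bound for $\|x'+a'\|^2$.

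I do not anticipate any real obstacle; this proposition is essentially a substitution once Lemma~\ref{l11} and Proposition~\ref{p10} are available. The only points deserving care are the two roles played by the symbol $\lambda$ (the overall scalar and the $(1,1)$-entry, which here coincide), the repeated appeal to $|\lambda|=1$ to eliminate $\overline{\lambda}$ and $|\lambda|^2$, and — the one genuinely load-bearing step — checking that the translation parameter $u=se+a'$ actually lies on $\partial\Sigma$, which is exactly where the constraint $\text{Re}\,s=\frac12\|a'\|^2$ enters. It is also worth remarking in passing that $(\lambda,a',s)$ and $(1,a',s)$ induce the same projective map since they differ by the central scalar $\lambda I$, which is why the resulting Heisenberg translation depends only on $u$ and not on $\lambda$.
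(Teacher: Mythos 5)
Your proposal is correct and follows essentially the same route as the paper: write the isometry as $(\lambda,a',s)$ via Lemma~\ref{l11}, feed it into Proposition~\ref{p10}, and set $u=se+a'$ to recognize the affine map as $\widehat{\mathcal{T}}_u$. Your explicit check that $u\in\partial\Sigma$ via $\text{Re}\,s=\frac{1}{2}\|a'\|^2$ is a welcome detail the paper leaves implicit, but it does not change the argument.
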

 \begin{proof}
Let $\widehat{T}=(\lambda,a',s)$ be a  parabolic isometry in $\widehat{G}_\infty$ having singleton spectrum. Then by Proposition \ref{p10}, its projective map is given by\\
 $\mathcal{\widehat{T}}(x)=\left[\begin{array}{cc}
  1    & \left<\boldsymbol{\cdot},a'\right>\\
    0  & I
 \end{array}\right]\left[\begin{array}{c}
  \left<x,e\right>\\
  x'
 \end{array}\right]+\left[\begin{array}{c}
    s\\
 a'
 \end{array}\right]=\Big(\left<x,e\right>+s+\left<x', a'\right>\Big)e+ x'+ a'= \bigg(\left<x,e\right>+\Big< (se+a'),e\Big>+\left<x', a'\right>\bigg)e+ x'+ a'$.  Let $\mathcal{\widehat{T}}=\mathcal{\widehat{T}}_u$ where $u=se+a' \in \partial{\Sigma}$. Then
 \[\widehat{\mathcal{T}}_u(x)=\Big(\left<x,e\right>+\left<u,e\right>+\left<x',u'\right>\Big)e+(x'+u').\]
 \end{proof}
From now on, we shall refer to an isometry of the form $(\lambda, a',s)$ as Heisenberg translation. The above proposition tells that $u'=a'$ is the component in the horizontal direction. Hence
 \begin{definition}
   A Heisenberg translation of the form $(\lambda,a',s)$ is called  \textbf{\textit{vertical translation}} if $a'=0$  and  \textbf{\textit{non-vertical translation}} if $a'\neq 0$.
 \end{definition}
Recall that for a subspace $M$ of $H \oplus \mathbb{C}$, we use the same notation $M^{\dagger}$ to denote the orthogonal complement of $M$ for the Hermitian form $\widehat{\mathcal{A}}$ as well.
\begin{proposition}\label{p12}
Let $\widehat{T}=(\lambda,a',s)$ be a Heisenberg translation. Then $\widehat{T}$ decomposes $H \oplus \mathbb{C}$ orthogonally into a two or three dimensional subspace  say $K$ and $K^{\dagger}$ such that $\widehat{T}\restriction_{{K}^{\dagger}}= \lambda\,I$. The eigenspace corresponding to $\lambda$ for the operator $\widehat{T}\restriction_{K}$ is generated by a light-like eigenvector and  its minimal polynomial is of maximal degree.
\end{proposition}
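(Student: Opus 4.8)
\textbf{Proof proposal for Proposition \ref{p12}.}
The plan is to produce the subspace $K$ by hand, distinguishing the vertical and non-vertical cases, and then to read every assertion off the matrix of $\widehat{T}$ restricted to $K$. By Lemma \ref{l11} we may write $\widehat{T}=(\lambda,a',s)$ acting on $H\oplus\mathbb{C}=\langle e\rangle\oplus\langle e\rangle^{\perp}\oplus\mathbb{C}$, with $s\neq 0$ and $|\lambda|=1$. A direct computation using (\ref{eq:1}) gives $\widehat{T}(e,0)=\lambda(e,0)$, $\widehat{T}(a',0)=\lambda\|a'\|^2(e,0)+\lambda(a',0)$ and $\widehat{T}(0,1)=\lambda s(e,0)+\lambda(a',0)+\lambda(0,1)$. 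Accordingly I would set $K=\mathrm{span}\{(e,0),(0,1)\}$ when $a'=0$ and $K=\mathrm{span}\{(e,0),(a',0),(0,1)\}$ when $a'\neq 0$; these vectors are linearly independent (using $a'\in\langle e\rangle^{\perp}$ in the second case), so $\dim K$ is $2$ or $3$, and the displayed formulas show $\widehat{T}(K)\subseteq K$, hence $\widehat{T}(K)=K$ since $\widehat{T}$ is invertible and $K$ is finite-dimensional.

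Next I would verify that $K$ is non-degenerate for $\widehat{\mathcal{A}}$. Using (\ref{eq:10}) and $a'\in\langle e\rangle^{\perp}$, the Gram matrix of $K$ with respect to the chosen basis is $\left[\begin{smallmatrix}0&-1\\-1&0\end{smallmatrix}\right]$ in the vertical case and $\left[\begin{smallmatrix}0&0&-1\\0&\|a'\|^2&0\\-1&0&0\end{smallmatrix}\right]$ in the non-vertical case, with determinants $-1$ and $-\|a'\|^2$ respectively; both are nonzero. Hence Proposition \ref{direct sum} gives $H\oplus\mathbb{C}=K\oplus K^{\dagger}$, and Lemma \ref{b}, applied to the bijection $\widehat{T}\colon K\to K$, gives $\widehat{T}(K^{\dagger})=K^{\dagger}$, so $\widehat{T}=\widehat{T}\restriction_{K}\oplus\widehat{T}\restriction_{K^{\dagger}}$. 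Solving $\widehat{\mathcal{A}}\big((x,z),v\big)=0$ for $v$ running over the basis of $K$ identifies $K^{\dagger}$ with $\{(x',0):x'\in\langle e\rangle^{\perp}\}$ in the vertical case and with $\{(x',0):x'\in\langle e\rangle^{\perp},\ \langle x',a'\rangle=0\}$ in the non-vertical case; in either situation $\widehat{T}(x',0)=\lambda(\langle x',a'\rangle e+x',0)=\lambda(x',0)$, so $\widehat{T}\restriction_{K^{\dagger}}=\lambda I$.

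It remains to analyse $\widehat{T}\restriction_{K}$. From the image computations, its matrix in the chosen basis is $\lambda\left[\begin{smallmatrix}1&s\\0&1\end{smallmatrix}\right]$ (vertical) or $\lambda\left[\begin{smallmatrix}1&\|a'\|^2&s\\0&1&1\\0&0&1\end{smallmatrix}\right]$ (non-vertical), so $N:=\widehat{T}\restriction_{K}-\lambda I$ is nilpotent; since $s\neq 0$ in the first case and the $(1,3)$-entry of $N^{2}$ equals $\|a'\|^{2}\neq 0$ in the second, $N$ has nilpotency index equal to $\dim K$, i.e.\ $N$ is a single Jordan block. Consequently the minimal polynomial of $\widehat{T}\restriction_{K}$ is $(x-\lambda)^{\dim K}$, of maximal degree, and $\ker N=\langle(e,0)\rangle$, which is light-like because $\widehat{\mathcal{Q}}(e,0)=0$ by (\ref{eq:11}). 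I do not anticipate a serious obstacle: the only delicate points are checking non-degeneracy of $K$ in the three-dimensional case (so that Proposition \ref{direct sum} and Lemma \ref{b} apply) and confirming that $N$ is a single Jordan block rather than two, which is precisely where the standing hypotheses $s\neq 0$ and $a'\neq 0$ are used.
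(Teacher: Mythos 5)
Your proposal is correct and follows essentially the same route as the paper: the same subspace $K=\mathrm{span}\{(e,0),(a',0),(0,1)\}$, the same appeal to Proposition \ref{direct sum} and Lemma \ref{b} for $H\oplus\mathbb{C}=K\oplus K^{\dagger}$, the same identification of $K^{\dagger}$ and of $\widehat{T}\restriction_{K^{\dagger}}=\lambda I$, and the same nilpotency computation (your $(1,3)$-entry of $N^{2}$ is exactly the paper's evaluation of $(\widehat{T}_1-\lambda I)^2(0,1)$). The only cosmetic difference is that you verify non-degeneracy of $K$ by an explicit Gram determinant, where the paper invokes Lemma \ref{parker}; both are fine.
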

\begin{proof}
Let  $\widehat{T}=\lambda\left[\begin{array}{ccc}
  1    & \left<\boldsymbol{\cdot},a'\right> &   s\\
    0  &  I & a'\\
    0 & 0 & 1
 \end{array}\right]$ be a  Heisenberg translation. \\Let $K=span\,\Big\{(e,0),\,(a',0),\,(0,1)\Big\}$. It is easy to see that  $\widehat{T}:  K \longrightarrow K$ is a bijection as  $\widehat{T}(e,0)=\lambda(e,0)$, $\widehat{T}(0,1)=\lambda(se+a',1)$
 and $\widehat{T}(a',0)=\lambda(\|a'\|^2e+ a',0)$. In view of Lemma \ref{parker},  $K$ is a non-degenerate subspace as it contains two light-like vectors.  We have $H \oplus \mathbb{C}=K \oplus K^{\dagger}$ by Lemma \ref{direct sum}.  Observe that $K^{\dagger}=\{(x',0)\,\,:\,\,\left<x',a'\right>=0\}$ and ${\widehat{T}\restriction_{K^{\dagger}}}=\lambda\,I$. Let $\widehat{T}=\widehat{T}_1 \oplus \widehat{T}_2$ where $\widehat{T}_1=\widehat{T}\restriction_K$ and $\widehat{T}_2=\widehat{T}\restriction_{K^{\dagger}}$.
   Next we see that  $ker(\widehat{T}_1- \lambda\,I)=\left<(e,0)\right>$. Observe that\\ $(\widehat{T}- \lambda\,I)(x,z)=\lambda\left[\begin{array}{ccc}
  0    & \left<\boldsymbol{\cdot},a'\right> &   s\\
    0  & 0 & a'\\
    0 & 0 & 0
 \end{array}\right]\left[\begin{array}{c}
 \left<x,e\right>\\
 x'\\
 z
 \end{array}\right]=\left[\begin{array}{c}
 0\\
 0\\
 0
 \end{array}\right]$\\
 implies  $\left<x',a'\right>+sz=0$ and $a'z=0$
  giving $z=0$ and $\left<x',a'\right>=0$. Hence $ker\,(\widehat{T}- \lambda\,I)=\left<(e,0)\right> \oplus K^{\dagger}$ and thus $ker\,{(\widehat{T}_1- \lambda\,I)}=\left<(e,0)\right>$. Consider
  ${(\widehat{T}_1- \lambda\,I)}^2(0,1)=\lambda^2\left[\begin{array}{ccc}
  0    & \left<\boldsymbol{\cdot},a'\right> &   s\\
    0  & 0 & a'\\
    0 & 0 & 0
 \end{array}\right]\left[\begin{array}{ccc}
  0    & \left<\boldsymbol{\cdot},a'\right> &   s\\
    0  & 0 & a'\\
    0 & 0 & 0
 \end{array}\right]\left[\begin{array}{c}
 0\\
 0\\
 1
 \end{array}\right]=\\
 \lambda^2 \left[\begin{array}{ccc}
  0    & \left<\boldsymbol{\cdot},a'\right> &   s\\
    0  & 0 & a'\\
    0 & 0 & 0
 \end{array}\right]\left[\begin{array}{c}
 s\\
 a'\\
 0
 \end{array}\right]=\lambda^2\left[\begin{array}{c}
 \|a'\|^2\\
 0\\
 0
 \end{array}\right] \not\equiv 0$ for $a' \neq 0$. Hence for a non-vertical translation, minimal polynomial of $\widehat{T}_1$ is $(x-\lambda)^3$ as $K$ is a three dimensional subspace. Observe that if $\widehat{T}$ is a vertical translation, i.e. $a'=0$ then $K=\left<(e,0)\right> \oplus \mathbb{C}$ is a two dimensional subspace and hence minimal polynomial is $(x-\lambda)^2$.
\end{proof}

\begin{proposition}\label{p11}
(1) All non-vertical Heisenberg translations having same spectra fall in a single conjugacy class. \\
(2) All vertical Heisenberg translations having same spectra  get dispersed into two conjugacy classes.
 \end{proposition}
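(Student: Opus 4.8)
The plan is to do everything inside the stabiliser $\widehat{G}_\infty$, using the three Iwasawa factors in the corollary after Proposition \ref{l}: translations $(1,b',u)$ with $\mathrm{Re}\,u=\tfrac12\|b'\|^2$, rotations $R_U$, and dilations $D_\beta=\mathrm{diag}(\beta,I,\gamma)$ with $\beta\overline{\gamma}=1$. First I would record how conjugation by each factor acts on a Heisenberg translation $(\lambda,a',s)$; a direct block-matrix computation gives
\[
R_U\,(\lambda,a',s)\,R_U^{-1}=(\lambda,\,U(a'),\,s),\qquad D_\beta\,(\lambda,a',s)\,D_\beta^{-1}=(\lambda,\,\overline{\beta}\,a',\,|\beta|^2 s),
\]
\[
(1,b',u)\,(\lambda,a',s)\,(1,b',u)^{-1}=\bigl(\lambda,\;a',\;s+2i\,\mathrm{Im}\langle a',b'\rangle\bigr),
\]
and in each case one checks that the defining relation $\mathrm{Re}(\text{new }s)=\tfrac12\|\text{new }a'\|^2$ survives, so the right-hand sides are again Heisenberg translations, necessarily with the same spectrum $\{\lambda\}$. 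These three identities are the engine of the whole argument.

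For (1), let $(\lambda,a',s)$ be non-vertical, so $a'\neq0$, and fix a unit vector $e_0\in\langle e\rangle^{\perp}$; I claim $(\lambda,a',s)$ is conjugate to the model $\widehat{T}_0=(\lambda,e_0,\tfrac12)$, which settles the statement. Choose a unitary $U$ of $\langle e\rangle^{\perp}$ with $U(a')=\|a'\|e_0$; conjugating by $R_U$ gives $(\lambda,\|a'\|e_0,s)$. Conjugating by $D_\beta$ with $\beta=\|a'\|^{-1}>0$ gives $(\lambda,e_0,s')$ where $\mathrm{Re}\,s'=\|a'\|^{-2}\,\mathrm{Re}\,s=\tfrac12$, hence $s'=\tfrac12+it$ for some $t\in\mathbb{R}$. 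Finally, since $e_0\neq0$ the real-linear map $b'\mapsto2\,\mathrm{Im}\langle e_0,b'\rangle$ is onto $\mathbb{R}$ (take $b'$ a suitable imaginary multiple of $e_0$), so choosing $b'$ with $2\,\mathrm{Im}\langle e_0,b'\rangle=-t$ and $u=\tfrac12\|b'\|^2$ and conjugating by $(1,b',u)$ brings us to $(\lambda,e_0,\tfrac12)=\widehat{T}_0$. Hence all non-vertical Heisenberg translations with spectrum $\{\lambda\}$ form a single conjugacy class.

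For (2), a vertical translation with spectrum $\{\lambda\}$ is $(\lambda,0,it)$ with $t\in\mathbb{R}\setminus\{0\}$. Since $a'=0$, the first and third identities show rotations and translations fix $(\lambda,0,it)$ under conjugation, while the dilation identity turns it into $(\lambda,0,i|\beta|^2 t)$; as every element of $\widehat{G}_\infty$ is a product $(1,b',u)R_U D_\beta$, conjugation inside $\widehat{G}_\infty$ acts on $(\lambda,0,it)$ only by scaling $t$ by the positive number $|\beta|^2$. Thus $\mathrm{sign}(t)$ is a $\widehat{G}_\infty$-conjugacy invariant, while within a fixed sign any two such elements are $D_\beta$-conjugate, so $\widehat{G}_\infty$ splits the vertical translations with spectrum $\{\lambda\}$ into exactly two classes. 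To pass from $\widehat{G}_\infty$ to $\widehat{G}$, note a Heisenberg translation is parabolic, so its induced map on $\overline{\Sigma}$ has the single fixed point $\infty$; if $g\in\widehat{G}$ conjugates one vertical translation to another it must carry $\infty$ to $\infty$, whence $g\in\widehat{G}_\infty$ by Proposition \ref{l}. Therefore $\widehat{G}$-conjugacy of vertical translations coincides with $\widehat{G}_\infty$-conjugacy, and (2) follows. The work is essentially bookkeeping: the crux of (1) is the translation identity $(1,b',u)(\lambda,a',s)(1,b',u)^{-1}=(\lambda,a',s+2i\,\mathrm{Im}\langle a',b'\rangle)$, which makes $\mathrm{Im}\,s$ adjustable precisely when $a'\neq0$ and frozen when $a'=0$; the crux of (2) is the elementary remark that a parabolic conjugator preserves the point at infinity, confining the analysis to $\widehat{G}_\infty$. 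Care is needed only to verify at each step that the constraint $\mathrm{Re}\,s=\tfrac12\|a'\|^2$ is preserved and that every conjugator produced genuinely lies in $\widehat{G}$.
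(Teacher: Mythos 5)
Your proof is correct and follows essentially the same route as the paper: everything is carried out in $\widehat{G}_\infty$ (justified, as in the paper, by the observation that any conjugator between Heisenberg translations must fix $\infty$), and your three conjugation identities encode exactly the paper's intertwining conditions (\ref{eq:12})--(\ref{eq:13}), with the sign of $\mathrm{Im}\,s$ emerging as the invariant in the vertical case. The only difference is organizational: you reduce every non-vertical translation to the normal form $(\lambda,e_0,\tfrac12)$ factor by factor via the Iwasawa decomposition, whereas the paper solves $\widehat{R}(1,a',s)=(1,b',t)\widehat{R}$ directly for a general $\widehat{R}\in\widehat{G}_\infty$.
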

 \begin{proof}
 Observe that any isometry in $\widehat{G}$ conjugating one Heisenberg translation into the other has to fix $\infty$.\\
(1)   We will see that any two non-vertical Heisenberg translations of the form $(\lambda,a',s)$ and $(\lambda,b', t)$ are conjugate to each other, i.e. to show the existence of an isometry $\widehat{R}=\left[\begin{array}{ccc}
  \lambda'    & \left<\boldsymbol{\cdot},U^{-1}(c')\right> &   \mu' s_1\\
    0  & U & \mu' c'\\
    0 & 0 & \mu'
 \end{array}\right] \in \widehat{G}_\infty$ such that $\widehat{R}(\lambda,a',s)\widehat{R}^{-1}=(\lambda,b', t)$. This is same thing as showing $\widehat{R}(1,a',s)\widehat{R}^{-1}=(1,b', t)$.  We have \\
 $\widehat{R}(1,a',s)=\left[\begin{array}{ccc}
  \lambda'    & \left<\boldsymbol{\cdot},U^{-1}(c')\right> &   \mu' s_1\\
    0  & U & \mu' c'\\
    0 & 0 & \mu'
 \end{array}\right]\left[\begin{array}{ccc}
  1    & \left<\boldsymbol{\cdot},a'\right> &   s\\
    0  & I & a'\\
    0 & 0 & 1
 \end{array}\right]$\\
 =$\left[\begin{array}{ccc}
  \lambda'    & \lambda'\left<\boldsymbol{\cdot},a'\right>+\left<\boldsymbol{\cdot},U^{-1}(c')\right> &   \lambda'\,s+\left<U(a'),c'\right>+\mu'\, s_1\\
    0  & U & U(a')+\mu'\,c'\\
    0 & 0 & \mu'
 \end{array}\right]$\\
 and $(1,b', t)\widehat{R}=\left[\begin{array}{ccc}
  1    & \left<\boldsymbol{\cdot},b'\right> &   t\\
    0  & I & b'\\
    0 & 0 & 1
 \end{array}\right]\left[\begin{array}{ccc}
  \lambda'    & \left<\boldsymbol{\cdot},U^{-1}(c')\right> &   \mu' \,s_1\\
    0  & U & \mu' c'\\
    0 & 0 & \mu'
 \end{array}\right]$\\
 =$\left[\begin{array}{ccc}
  \lambda'    & \left<\boldsymbol{\cdot},U^{-1}(c')\right>+\left<\boldsymbol{\cdot},U^{-1}(b')\right> &   \mu'\,s_1+\left<\mu'\,c',b'\right>+\mu'\,t\\
    0  & U & \mu'\,c'+\mu'\,b'\\
    0 & 0 & \mu'
 \end{array}\right]$.\\
 From the above computations we see that $\widehat{R}(1, a',s)=(1, b', t)\widehat{R}$ if and only if
 \begin{align}
 U(a')&=\mu'\,b'\,\,\,\,\text{and}\label{eq:12}\\
 \lambda'\,s+\left<U(a'),c'\right>&=\left<\mu'\,c',b'\right>+\mu'\,t \label{eq:13}.
 \end{align}
 Choose $\mu'=\dfrac{\|a'\|}{\|b'\|}$ and a unitary operator $U$ which takes an orthonormal basis in $\left<a'\right>^{\perp}$ to an orthonormal basis in $\left<b'\right>^{\perp}$. We need to show the existence of $c'$ satisfying $\lambda'\,s+\mu'\left<
 b',c'\right>=\mu'\,\left<c',b'\right>+\mu'\,t$, i.e. $|\lambda'|^2\,s-t=\left<c',b'\right>-\left<b',c'\right>=2i\,\text{Im}\,(\left<c',b'\right>)$. By putting $\lambda'=\dfrac{\|b'\|}{\|a'\|}$ and using $\text{Re}\,s=\dfrac{1}{2}\|a'\|^2$ and $\text{Re}\,t=\dfrac{1}{2}\|b'\|^2$,  we are able to find a suitable $c'$.

 (2) Equations (\ref{eq:12}) and (\ref{eq:13}) tell that two vertical Heisenberg translations $(\lambda,0,s)$ and $(\lambda,0,t)$  are conjugate to each other if and only if there exists $\lambda'$ satisfying $|\lambda'|^2\,s=t$, i.e. $|\lambda'|^2=\dfrac{t}{s}=\dfrac{\text{Im}\,t}{\text{Im}\,s}$ as $\text{Re}\,s=\text{Re}\,t=0$. Hence all vertical translations of the form $(\lambda,0,s)$ with $\text{Im}\,s>0$ fall in one conjugacy class and those with $\text{Im}\,s<0$  fall in the other conjugacy class.
  \\
 It is easy to see that $(\lambda,a',s)$ cannot be conjugated to $(\lambda, 0, t)$ as (\ref{eq:12}) tells $b'=0$ if and only if $a'=0$.
 \end{proof}

 \textbf{Proof of Theorem \ref{o}.} (1) If $\widehat{S}$ is a parabolic isometry having singleton spectrum  then by Lemma \ref{bitransitive}, $\widehat{S}$ can be conjugated to a parabolic isometry in $\widehat{G}_\infty$ having singleton spectrum, i.e. a Heisenberg translation.  \\
 (2) Follows from Proposition \ref{p12} in view of (1).\\
 (3) By (2) and (1),  a parabolic isometry having singleton  spectrum and degree of restricted minimal polynomial as three (resp. two)  is conjugate to a non-vertical (resp. vertical) Heisenberg translation. Now Proposition \ref{p11} asserts the claim.\qed
 \section{Centralizers of isometries}

Centralizer of an element $T \in G$ will be denoted by $Z(T)$. Recall that for any two bounded linear operators  $T$ and $S$ on a Hilbert space, $T \longleftrightarrow S$ means $T$  and $S$ commute with each other. Some  results of this section are motivated from \cite{WCKG} and \cite{g}.
 Using the terminology of \cite{WCKG} we have the following.
  \begin{definition}\label{d2}
   For an isometry $T \in G$, an eigenvalue of $T$ is called \textbf{\textit{time-like eigenvalue}} if its corresponding eigenspace is time-like.
 We call an elliptic isometry \textbf{\textit{boundary  elliptic}} if the geometric multiplicity of its time-like eigenvalue is atleast two, otherwise it is called  \textbf{\textit{regular  elliptic}}.
  \end{definition}
\begin{lemma}\label{l8}
Let $T,\,S \in G$. If $T \longleftrightarrow S$ and $E$ is an eigenspace of $T$ then $S(E)=E$.
\end{lemma}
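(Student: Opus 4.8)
The plan is to use the elementary fact that commuting operators preserve each other's eigenspaces, and to exploit that every element of $G$ is invertible so that a one-sided containment can be promoted to an equality.

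First I would fix the eigenvalue $\lambda$ with $E=\ker(T-\lambda I)$. For an arbitrary $v\in E$, using $TS=ST$ I compute $T(S(v))=S(T(v))=S(\lambda v)=\lambda\,S(v)$, so $S(v)\in\ker(T-\lambda I)=E$. This already gives $S(E)\subseteq E$.

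Next, since $S\in G$ it is a bijective bounded linear operator on $H\oplus\mathbb{C}$, so $S^{-1}$ exists (and in fact $S^{-1}\in G$). From $TS=ST$ one obtains $S^{-1}T=TS^{-1}$, i.e.\ $S^{-1}\longleftrightarrow T$; applying the previous paragraph to $S^{-1}$ in place of $S$ yields $S^{-1}(E)\subseteq E$. Hence $E=S\big(S^{-1}(E)\big)\subseteq S(E)$, and together with the reverse inclusion this gives $S(E)=E$.

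There is essentially no serious obstacle here; the only points requiring a little care are that ``eigenspace'' be read as the full kernel $\ker(T-\lambda I)$ (so that the computation above places $S(v)$ back in $E$), and that the invertibility of $S$, guaranteed by $S\in G$, be invoked to pass from the inclusion $S(E)\subseteq E$ to equality — without it one could only conclude $S(E)\subseteq E$.
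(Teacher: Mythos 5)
Your proof is correct and follows exactly the paper's argument: the paper's one-line proof ("the fact that $S^{-1}$ also commutes with $T$ proves the claim") is precisely your route, namely $S(E)\subseteq E$ from commutativity and $S^{-1}(E)\subseteq E$ from the commutativity of $S^{-1}$ with $T$, giving equality. You have merely written out the standard computation in full detail.
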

\begin{proof}
The fact that $S^{-1}$ also commutes with $T$ proves the claim.
\end{proof}
 \begin{corollary}\label{c4}
  Regular elliptic isometries commute only with elliptic isometries.
  \end{corollary}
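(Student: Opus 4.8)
The plan is to exploit the fact that a regular elliptic isometry has a \emph{one-dimensional} time-like eigenspace, and then to combine this with Lemma~\ref{l8}, which says that any operator commuting with $T$ preserves each eigenspace of $T$.

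First I would fix a regular elliptic isometry $T \in G$. By Theorem~\ref{e}(3), $T$ has a time-like eigenvector, and after scaling we may take it to be of the form $(x,1)$ with $x \in B$; let $\mu_0$ be the corresponding (time-like) eigenvalue and $E = \ker(T - \mu_0 I)$ its eigenspace. By the definition of \textbf{regular elliptic} (Definition~\ref{d2}), the geometric multiplicity of the time-like eigenvalue $\mu_0$ is exactly one, so $E = \left<(x,1)\right>$ is one-dimensional.

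Next, I would take an arbitrary $S \in G$ with $S \longleftrightarrow T$ and show $S$ is elliptic. By Lemma~\ref{l8}, $S(E) = E$. Since $\dim E = 1$, this forces $S(x,1) = \nu(x,1)$ for some $\nu \in \mathbb{C}$, i.e. $(x,1)$ is an eigenvector of $S$. But $\mathcal{Q}(x,1) = \|x\|^2 - 1 < 0$ because $x \in B$, so $(x,1)$ is time-like; hence $S$ possesses a time-like eigenvector, and the converse part of Theorem~\ref{e}(3) gives that $S$ is elliptic. Thus every element of $Z(T)$ is elliptic, which is the claim. The corresponding statement in $Aut(B)$ follows by transporting along the homomorphism $\phi$ of Theorem~\ref{a}, since $\phi$ carries $Z(T)$ into the centralizer of $\widetilde{T}$ and preserves the elliptic/hyperbolic/parabolic trichotomy.

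There is no substantial obstacle in this argument; the only point that genuinely needs attention is recognizing that the hypothesis \emph{regular} elliptic (as opposed to boundary elliptic) is precisely what makes the time-like eigenspace one-dimensional, so that $S$-invariance of that eigenspace can be upgraded from ``$S$ preserves $E$'' to ``$(x,1)$ is itself an eigenvector of $S$'', which is what lets us invoke Theorem~\ref{e}(3).
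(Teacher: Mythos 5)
Your argument is correct and is essentially the paper's own: the corollary is deduced from Lemma \ref{l8} (the centralizing element preserves the one-dimensional time-like eigenspace of the regular elliptic, hence has a time-like eigenvector) together with the converse direction of Theorem \ref{e}(3). Nothing further is needed.
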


      \begin{lemma}\label{LEMMA}
          Let $M$ be a non-degenerate subspace of $H \oplus \mathbb{C}$ such that $H \oplus \mathbb{C}=M \oplus M^{\dagger}$. Then for any $R \in G$,  $H \oplus \mathbb{C}=R(M) \oplus {R(M)}^{\dagger}$.

  \end{lemma}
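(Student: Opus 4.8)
\emph{Proof proposal.} The plan is to show that an isometry $R\in G$ intertwines the operation of taking $\mathcal{A}$-orthogonal complements, i.e. that $R(M^{\dagger})=R(M)^{\dagger}$, and then to push the given decomposition $H\oplus\mathbb{C}=M\oplus M^{\dagger}$ through the linear bijection $R$. The only inputs needed are that $R$ leaves $\mathcal{A}$ invariant and that $R$ is a bijection of $H\oplus\mathbb{C}$ onto itself; in particular, no finite-dimensionality or closedness is used, and Lemma \ref{b} is not applicable directly because $R$ need not preserve $M$.

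First I would prove $R(M^{\dagger})=R(M)^{\dagger}$ by double inclusion. If $u\in M^{\dagger}$, then for every $v\in M$ we have $\mathcal{A}\left(R(u),R(v)\right)=\mathcal{A}(u,v)=0$; since $R\restriction_{M}$ surjects onto $R(M)$, this says $R(u)\in R(M)^{\dagger}$, so $R(M^{\dagger})\subseteq R(M)^{\dagger}$. Conversely, if $w\in R(M)^{\dagger}$, surjectivity of $R$ lets us write $w=R(u)$; then for every $v\in M$, $\mathcal{A}(u,v)=\mathcal{A}\left(R(u),R(v)\right)=0$, hence $u\in M^{\dagger}$ and $w\in R(M^{\dagger})$. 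For the second step, linearity gives $R(M)+R(M^{\dagger})=R(M+M^{\dagger})=R(H\oplus\mathbb{C})=H\oplus\mathbb{C}$, while injectivity of $R$ gives $R(M)\cap R(M^{\dagger})=R(M\cap M^{\dagger})=\{0\}$; combining these with the first step yields $H\oplus\mathbb{C}=R(M)\oplus R(M)^{\dagger}$. If one also wants $R(M)$ to be non-degenerate, this is immediate since $R\restriction_{M}:M\longrightarrow R(M)$ is a bijective linear $\mathcal{A}$-isometry, so it transports the non-degeneracy of $\mathcal{A}\restriction_{M}$ to $\mathcal{A}\restriction_{R(M)}$.

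There is essentially no obstacle: the only points requiring a moment's care are that $R(M)\cap R(M^{\dagger})=R(M\cap M^{\dagger})$ uses injectivity of $R$ and that the images of $M$ and $M^{\dagger}$ span the whole space because $R$ maps $H\oplus\mathbb{C}$ onto itself. The identical argument applies verbatim in the Siegel domain setting, with $\widehat{\mathcal{A}}$, $\widehat{G}$ and the corresponding notion of $M^{\dagger}$.
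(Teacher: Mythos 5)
Your proof is correct and takes essentially the same route as the paper: both arguments push the decomposition $H\oplus\mathbb{C}=M\oplus M^{\dagger}$ through $R$ using the $\mathcal{A}$-invariance of $R$, the paper doing it vector-by-vector (writing $y=R(m)+\bigl(y-R(m)\bigr)$ with $y-R(m)\in R(M)^{\dagger}$, where $m$ is the $M$-component of $R^{-1}(y)$) while you first record the identity $R(M^{\dagger})=R(M)^{\dagger}$ and then transport the whole decomposition. This repackaging changes nothing of substance, and your handling of the trivial intersection via injectivity is equivalent to the paper's appeal to non-degeneracy of $M$.
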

  \begin{proof}
   Clearly, $R(M) \oplus R(M)^{\dagger} \subseteq H \oplus \mathbb{C}$. We will show that $H \oplus \mathbb{C} \subseteq R(M) \oplus R(M)^{\dagger}$. As $M$ is non-degenerate, $R(M) \cap {R(M)}^{\dagger} =\{0\}$. Let $y \in H \oplus \mathbb{C}$ and $R(m) \in R(M)$ for some $m \in M$. We will see that $y -R(m) \in R(M)^{\dagger}$. For $x \in M$, consider $\mathcal{A}(y-R(m),R(x))=\mathcal{A}(y,R(x))-\mathcal{A}(R(m),R(x))=\mathcal{A}(R^{-1}(y),x)-\mathcal{A}(m,x)=\mathcal{A}(R^{-1}(y)-m,x)=0$ as $H \oplus \mathbb{C}=M \oplus M^{\dagger}$, $x \in M$, $R^{-1}(y) \in H \oplus \mathbb{C}$ and thus $R^{-1}(y)-m \in M^{\dagger}$.
   \end{proof}
 Recall that for  a closed subspace $M \subseteq H \oplus \mathbb{C}$,   $G_{M}$ denotes the group of all bijective bounded linear mappings on $M$ leaving the form $\mathcal{A}\restriction_{M}$ invariant.

  \textbf{Proof of Theorem \ref{p1}.}  First we will show that $T$ decomposes $H \oplus \mathbb{C}$ into $M$ and $M^{\dagger}$. Let $T=R\wt{U}R^{-1}$ where $\wt{U} \in G$ is of the form $e^{i \theta}\left[\begin{array}{cc}
  U & 0\\
  0 & 1\\
  \end{array}\right]$. Observe that $e^{i \theta}$ is the only time-like eigenvalue of $T$ and $\wt{U}$ both. As $M$ is a time-like eigenspace of $T$ (corresponding to eigenvalue $e^{i \theta}$),  $R^{-1}(M)$ is an eigenspace of $\wt{U}$ corresponding to eigenvalue $e^{i \theta}$ and hence this eigenspace contains  $\mathbb{C}$. So $R^{-1}(M)$ is a closed subspace of $H \oplus \mathbb{C}$ . As $R^{-1}(M)^{\dagger}=R^{-1}(M)^{\perp}$ ($R^{-1}(M)^{\perp} \subseteq H$), $H \oplus \mathbb{C}=R^{-1}(M) \oplus (R^{-1}(M))^{\dagger}$. Lemma \ref{LEMMA} tells $H \oplus \mathbb{C}=M \oplus M^{\dagger}$. Also $T(M)=M$ gives that $T(M^{\dagger})=M^{\dagger}$ (Lemma \ref{b}). Hence $T$ decomposes $H \oplus \mathbb{C}$ into $M$ and $M^{\dagger}$. Let $S \in Z(T)$. Lemma \ref{l8} tells that $S(M)=M$ and hence $S(M^{\dagger})=M^{\dagger}$ by Lemma \ref{b}.   Now we show that $M^{\dagger}$ is a closed subspace of $H \oplus \mathbb{C}$. As $\mathcal{A}\restriction_{{M}^{\dagger}}$ is positive definite by Lemma \ref{parker}, it is  an inner product space. Since $R^{-1}(M^{\dagger})=R^{-1}(M)^{\dagger}=R^{-1}(M)^{\perp}$, $R^{-1}:M^{\dagger} \longrightarrow (R^{-1}(M))^{\perp}$ is a surjective linear isometry.  This gives that $M^{\dagger}$  is a Hilbert space as  $(R^{-1}(M))^{\perp}$ is complete. The above analysis yields  $Z(T)=Z(T\restriction_{M}) \times Z(T\restriction_{{M}^{\dagger}})$ where  $Z(T\restriction_{{M}^{\dagger}})\subseteq \mathcal{U}(M^{\dagger})$. As $T\restriction_{M}$ is a scalar multiple of identity, $Z(T\restriction_{M})=G_{M}$.  \qed

 \subsection{Centralizers of hyperbolic isometries} In this section we provide proof of Theorem \ref{p3}.\\
 \textbf{Proof of Theorem \ref{p3}.}  As $M$ is a non-degenerate subspace by Lemma \ref{parker}, $T$ decomposes $H \oplus \mathbb{C}$ into $M$ and $M^{\dagger}$ (Lemma \ref{direct sum} and Lemma \ref{b}). Let $S \in Z(T)$. Lemma \ref{l8} tells that $S$ leaves each one-dimensional eigenspace invariant. Hence $S(M)=M$ and $S(M^{\dagger})=M^{\dagger}$ (Lemma \ref{b}).    The same argument as in the proof of Theorem \ref{t1} (3) shows that $M^{\dagger}$ is a Hilbert space. So we have $Z(T)=Z(T\restriction_{M})   \times Z(T\restriction_{M^{\dagger}})  $ where $Z(T\restriction_{M}) \subseteq G_M$ and  $Z(T\restriction_{M^{\dagger}}) \subseteq \mathcal{U}(M^{\dagger})$. We will show that $Z(T\restriction_{M})$ gets identified with $S^1 \times \mathbb{R}$. Let $M=\text{span}\{(y,1),(z,1)\}$ where $(y,1)$ and $(z,1)$ are two light-like eigenvectors of $T$. Let $S'=S\restriction_M \in Z(T\restriction_{M})$. Then $S'(y,1)=\alpha(y,1)$ and $S'(z,1)=\beta(z,1)$ for some $\alpha,\,\beta \in \mathbb{C}$. This implies that $S'$ is completely determined by its eigenvalues.  As $S' \in G_M$ too,  $\mathcal{A}((y,1),(z,1))=\mathcal{A}(S'(y,1),S'(z,1))$, i.e. $\left<y,z\right>-1=\alpha \overline{\beta}(\left<y,z\right>-1)$ if and only if $\alpha \overline{\beta}=1$ by  Lemma \ref{parker}. Now there are two possibilities either $\alpha=\beta$ or $\alpha \neq \beta$. $\alpha=\beta$ if and only if $\alpha \in S^1$ by Remark \ref{R2}.  $\alpha \neq \beta$ if and only if $S'$ is a hyperbolic isometry  and hence $\alpha$ and $\beta$ are of the form $re^{i \theta}$ and $r^{-1}e^{i \theta}$ by Theorem \ref{t1} (2). This tells that for $re^{i \theta} \in S^1 \times \mathbb{R}$, there corresponds a unique $S' \in Z(T\restriction_{M})$ such that $S'(y,1)=re^{i \theta}(y,1)$ and $S'(z,1)=r^{-1}e^{i \theta}(z,1)$. Hence every element of $S^1 \times \mathbb{R}$ uniquely determines an element of $Z(T\restriction_{M})$. \qed
 
\medskip 

Following is the description of centralizers in the group of unitary operators defined on a separable Hilbert  space $H$. We note it here without proof. \\
Using the multiplicity theory for normal operators on a separable Hilbert space $H$ \cite[Chapter IX, section 10]{JBC}, we will compute centralizer of a unitary operator in $\mathcal{U}(H)$. 
 
 The integral representation of normal operators with respect to spectral measure yields that if a normal operator $N$ is star-cyclic \cite[Chapter IX, Definition 3.1]{JBC}, then it is unitarily equivalent to the multiplication operator ( multiplication by identity) defined on $L^2(\sigma(N),\,\mu)$ where $\mu$ is a scalar valued spectral measure. Now, decomposing $H$ into countable orthogonal pieces such that the normal operator restricted to each piece is star-cyclic gives that $N$ is unitarily equivalent to the direct sum of multiplication operators. In precise form, we have the following.
 
 \begin{theorem}\cite[Chaper IX, Theorem 10.20]{JBC}
 If $N$ is a normal operator on $H$, then there are mutually singular measures $\mu_{\infty}$, $\mu_{1}$, $\mu_{2},\,\cdot \cdot \cdot$ and an isomorphism $U: H \longrightarrow L^2(\mu_{\infty};\, H_{\infty}) \oplus L^2(\mu_{1}) \oplus L^2(\mu_{2};\,H_{2}) \oplus \,\cdot \cdot \cdot$ such that $UNU^{-1}=N_{\infty} \oplus N_{1} \oplus N_{2} \oplus\,\cdot \cdot \cdot$ where $H_n$ is an n-dimensional Hilbert space, $L^2(\mu_n;\,H_n)$ is the space of square integrable $H_n$ valued functions and $N_n$ is multiplication by $z$ on $L^2(\mu_n;\,H_n)$. 
 \end{theorem}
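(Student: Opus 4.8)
The plan is to follow the classical development of multiplicity theory for normal operators as in \cite[Chapter IX]{JBC}; since the statement is quoted verbatim from that source I only indicate the skeleton. Let $E$ be the spectral measure of $N$ given by the spectral theorem, so $N=\int z\,dE(z)$ and one has a bounded Borel functional calculus $f\mapsto f(N)$. The first ingredient is the \emph{star-cyclic model}: if $N$ has a star-cyclic vector $e$ (so that $H$ is the closed linear span of $\{f(N)e:f\text{ bounded Borel on }\sigma(N)\}$), then the scalar spectral measure $\mu_e(\Delta)=\langle E(\Delta)e,e\rangle$ has the property that $f(N)e\mapsto f$ extends to a unitary $H\to L^2(\mu_e)$ intertwining $N$ with multiplication by $z$. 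This comes from a direct computation with the functional calculus, together with the density of polynomials in $z,\overline z$ in $L^2(\mu_e)$ (Stone--Weierstrass plus a monotone class argument).

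Second, using separability of $H$ one decomposes $H=\bigoplus_k H_k$ into countably many reducing subspaces, each carrying a star-cyclic vector $e_k$, arranged so that the scalar spectral measures form a decreasing chain $\mu_{e_1}\gg\mu_{e_2}\gg\cdots$ in the sense of absolute continuity. The mechanism is a maximality argument: among all unit vectors there is one, $e_1$, whose scalar spectral measure absolutely dominates every other scalar spectral measure (it is produced as a suitable $\ell^2$-combination of a countable dense family so that no spectral mass is missed); one then passes to the orthogonal complement of the reducing subspace generated by $e_1$ and iterates. By the first step, $N$ is then unitarily equivalent to $\bigoplus_k M_z$ on $\bigoplus_k L^2(\mu_k)$ with $\mu_1\gg\mu_2\gg\cdots$.

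Third, one converts the chain into the mutually singular form with prescribed multiplicities. Setting $A_k=\{\,d\mu_k/d\mu_1>0\,\}$ (a.e.\ $[\mu_1]$), Radon--Nikodym gives $A_1\supseteq A_2\supseteq\cdots$ and a unitary $L^2(\mu_k)\to L^2(\mu_1\!\restriction_{A_k})$, $g\mapsto g\,(d\mu_k/d\mu_1)^{1/2}$, commuting with multiplication by $z$; so $N\cong\bigoplus_k M_z$ on $\bigoplus_k L^2(\mu_1\!\restriction_{A_k})$. Now put $\Delta_\infty=\bigcap_k A_k$ and $\Delta_n=A_n\setminus A_{n+1}$ for $n\ge1$; these are pairwise disjoint modulo $\mu_1$-null sets, and $A_k=\Delta_\infty\cup\bigcup_{n\ge k}\Delta_n$ for each $k$. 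Hence in $\bigoplus_k L^2(\mu_1\!\restriction_{A_k})$ the summand $L^2(\mu_1\!\restriction_{\Delta_\infty})$ occurs for every $k$, i.e.\ countably infinitely often, while $L^2(\mu_1\!\restriction_{\Delta_n})$ occurs exactly for $k=1,\dots,n$, i.e.\ $n$ times. Identifying a countable direct sum of copies of $L^2(\mu_1\!\restriction_{\Delta_\infty})$ with $L^2(\mu_1\!\restriction_{\Delta_\infty};H_\infty)$, and $n$ copies of $L^2(\mu_1\!\restriction_{\Delta_n})$ with $L^2(\mu_1\!\restriction_{\Delta_n};H_n)$ — transporting multiplication by $z$ accordingly — and writing $\mu_\infty=\mu_1\!\restriction_{\Delta_\infty}$, $\mu_n=\mu_1\!\restriction_{\Delta_n}$, we obtain exactly the asserted decomposition; the measures $\mu_\infty,\mu_1,\mu_2,\dots$ are mutually singular because they are restrictions of $\mu_1$ to pairwise disjoint sets.

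I expect the main obstacle to be the second step, the \emph{ordered} star-cyclic decomposition. Obtaining countably many star-cyclic pieces is routine from separability, but forcing the scalar spectral measures into a decreasing absolute-continuity chain requires the existence of a vector of maximal spectral type, and the genuinely technical point is to verify that the explicit $\ell^2$-combination one writes down really does absolutely dominate all other scalar spectral measures, so that nothing is lost on passing to the orthogonal complement at each stage. Everything else is an application of the spectral theorem, the Radon--Nikodym and Lebesgue decomposition theorems, and elementary bookkeeping with direct sums of $L^2$-spaces.
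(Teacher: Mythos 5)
Your sketch is correct and follows the standard multiplicity-theory argument (star-cyclic model giving multiplication by $z$ on $L^2$ of a scalar spectral measure, a maximal-spectral-type vector to produce the decreasing chain, then the Hahn--Hellinger rearrangement into mutually singular pieces with the stated multiplicities), which is precisely the route of the cited source \cite[Chapter IX]{JBC}. The paper itself states this theorem without proof, and its surrounding discussion summarizes the same first two steps you describe, so there is nothing to reconcile.
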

 Also, if $N$ is multiplication by $z$ on $L^2(\mu;\,H_n)$ and $M_{\phi}$ is multiplication by $\phi$, then $$\{N\}'=\left\{M_{\phi}\,\,:\,\,\phi \in L^{\infty}(\mu;\,B(H_n))\right\},$$
 see \cite[Chapter IX, Corollary 6.9]{JBC}.
 This gives $$\{N_{\infty}\oplus N_1 \oplus N_2\oplus\, \cdot \cdot \cdot\}'=L^{\infty}(\mu_{\infty};\,B(H_{\infty})) \oplus L^{\infty}(\mu_1)\oplus L^{\infty}(\mu_2;\,B(H_2))\oplus\,\cdot \cdot \cdot.$$

 As a corollary to the above analysis,  we have the following result.
 \begin{proposition} \label{k}
 If $V$ is a unitary operator on a separable Hilbert space $H$, then
 $$Z(V)=U^{-1}Z(V_{\infty}\oplus V_1\oplus V_2 \oplus...)U$$
 where $U$ is as in the preceding theorem and $Z(V_{\infty}\oplus V_1\oplus V_2 \oplus...)=U(H_{\infty})\oplus U(H_1)\oplus U(H_2)\oplus...$, $U(H_n)$ is the group of unitary elements in $L^{\infty}(\mu;\,B(H_n)).$
 
\end{proposition}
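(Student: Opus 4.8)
The plan is to deduce the statement directly from the structure theorem recalled just above, together with two elementary facts about commutants. Since a unitary operator is in particular normal, that theorem furnishes an isomorphism (a surjective unitary) $U\colon H\to L^2(\mu_\infty;H_\infty)\oplus L^2(\mu_1)\oplus L^2(\mu_2;H_2)\oplus\cdots$ with $UVU^{-1}=V_\infty\oplus V_1\oplus V_2\oplus\cdots$, where the $\mu_n$ are mutually singular and $V_n$ is multiplication by $z$ on $L^2(\mu_n;H_n)$. Conjugation by the fixed unitary $U$ is a group automorphism of $\mathcal{U}(H)$ sending $V$ to $UVU^{-1}$, so it carries the centralizer $Z(V)$ (taken in $\mathcal{U}(H)$) bijectively onto $Z(UVU^{-1})=Z(V_\infty\oplus V_1\oplus V_2\oplus\cdots)$. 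This already gives $Z(V)=U^{-1}Z(V_\infty\oplus V_1\oplus V_2\oplus\cdots)U$, the first displayed equality, and reduces the problem to identifying the centralizer of the model operator $V_\infty\oplus V_1\oplus V_2\oplus\cdots$ inside the unitary group of the orthogonal sum.

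For that identification I would argue in two steps. First, compute the commutant of $V_\infty\oplus V_1\oplus V_2\oplus\cdots$ in the full operator algebra: writing an operator commuting with the sum as an operator matrix $(X_{ij})$, each block $X_{ij}$ must intertwine $V_i$ and $V_j$. The diagonal blocks therefore lie in $\{V_n\}'=\{M_\phi:\phi\in L^\infty(\mu_n;B(H_n))\}$ by \cite[Chapter IX, Corollary 6.9]{JBC}, while every off-diagonal block vanishes, because $\mu_i\perp\mu_j$ forces any operator intertwining multiplication by $z$ on $L^2(\mu_i;H_i)$ and on $L^2(\mu_j;H_j)$ to be zero (the standard consequence of the Fuglede--Putnam theorem for normal operators with mutually singular scalar spectral measures). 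Hence $\{V_\infty\oplus V_1\oplus V_2\oplus\cdots\}'=L^\infty(\mu_\infty;B(H_\infty))\oplus L^\infty(\mu_1)\oplus L^\infty(\mu_2;B(H_2))\oplus\cdots$, exactly as recorded above the statement. Second, intersect with the unitary group: an element $M_{\phi_\infty}\oplus M_{\phi_1}\oplus M_{\phi_2}\oplus\cdots$ of this commutant is unitary if and only if each $M_{\phi_n}$ is unitary on $L^2(\mu_n;H_n)$, and $M_{\phi_n}^{*}M_{\phi_n}=M_{\phi_n}M_{\phi_n}^{*}=I$ is equivalent to $\phi_n(x)$ being a unitary operator on $H_n$ for $\mu_n$-almost every $x$, i.e.\ to $\phi_n$ being a unitary element of $L^\infty(\mu_n;B(H_n))$, which is precisely membership in the group $U(H_n)$. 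Therefore $Z(V_\infty\oplus V_1\oplus V_2\oplus\cdots)=U(H_\infty)\oplus U(H_1)\oplus U(H_2)\oplus\cdots$, and substituting into the first equality yields the proposition.

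The only genuinely non-formal ingredients are the vanishing of the off-diagonal intertwiners under mutual singularity of the measures and the pointwise-almost-everywhere characterization of unitarity of a multiplication operator; both are classical. I expect the former to be the step a reader would want spelled out with a precise reference, since it is what upgrades the commutant from a set of operator matrices to an honest direct sum. Everything else is bookkeeping: transporting the centralizer along the unitary $U$ and observing that a unitary element of a von Neumann algebra direct sum is computed summand by summand.
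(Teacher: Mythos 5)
Your argument is correct and follows essentially the same route as the paper, which states the proposition without proof as a corollary of the preceding multiplicity-theory discussion (Conway's Theorem IX.10.20 and Corollary IX.6.9): transport $Z(V)$ along the unitary $U$, identify the commutant of the model operator as the direct sum of the $L^{\infty}(\mu_n;\,B(H_n))$, and intersect with the unitary group. Your two added justifications — the vanishing of off-diagonal intertwiners from the mutual singularity of the $\mu_n$ (via Fuglede--Putnam and the spectral projections) and the almost-everywhere pointwise characterization of unitarity of $M_{\phi}$ — are exactly the routine steps the paper leaves implicit, and both are argued correctly.
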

For the non-separable case, one may refer to \cite{HAL}.

  \subsection{Centralizers of  Heisenberg translations}
  \begin{definition}\label{isotropic}
  Two Heisenberg translations $(\lambda,a',s)$ and $(\lambda',b',t)$ are said to be \textbf{\textit{isotropic}} if $\left<b',a'\right> \in \mathbb{R}$, i.e their translations in the horizontal direction have real product.
  \end{definition}
  $\widehat{T}=\lambda\left[\begin{array}{ccc}
  1    & \left<\boldsymbol{\cdot},a'\right> &   s\\
    0  &  I & a'\\
    0 & 0 & 1
     \end{array}\right]=(\lambda,a',s)
     $
     will denote  a  Heisenberg translation,  where $s \neq 0$, $|\lambda|=1$ and $\text{Re}\,s=\dfrac{1}{2}{\|a'\|}^{2},$ throughout the subsection.
 \begin{lemma}\label{l10}
 If $\widehat{S} \longleftrightarrow \widehat{T}$ then $\widehat{S}$ is either a boundary elliptic or parabolic isometry in $\widehat{G}_{\infty}$.
 \end{lemma}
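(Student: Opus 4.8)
The plan is to show that if $\widehat{S}$ commutes with $\widehat{T} = (\lambda, a', s)$, then $\widehat{S}$ must fix the point $\infty \in \partial\Sigma$, hence lie in $\widehat{G}_\infty$, and moreover cannot be hyperbolic. The starting observation is that $\widehat{T}$ is a parabolic isometry with singleton spectrum $\{\lambda\}$, and by Proposition \ref{p12} (or directly from the form of $\widehat{T}$) the kernel of $\widehat{T} - \lambda I$ is spanned by the single light-like eigenvector $(e,0)$, the standard lift of $\infty$. First I would apply Lemma \ref{l8}: since $\widehat{S} \longleftrightarrow \widehat{T}$, the operator $\widehat{S}$ maps each eigenspace of $\widehat{T}$ to itself. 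In particular $\widehat{S}$ preserves $\ker(\widehat{T} - \lambda I) = \left<(e,0)\right>$, so $\widehat{S}(e,0) = c\,(e,0)$ for some scalar $c$. This is precisely the statement that $\widehat{S}$ fixes $\infty$, i.e. $\widehat{S} \in \widehat{G}_\infty$.

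Next I would rule out $\widehat{S}$ being hyperbolic. A hyperbolic isometry has exactly two fixed points on the boundary, each corresponding to a one-dimensional eigenspace generated by a light-like eigenvector, with eigenvalues $re^{i\theta}$ and $r^{-1}e^{i\theta}$, $r \neq 1$ (Theorem \ref{t1}(2)). Since $\widehat{S}$ fixes $\infty$, one of these two light-like eigenvectors is $(e,0)$; let the other be a light-like vector $w$, with $\widehat{S}(w) = \nu w$ where $|\nu| \neq 1$ (one of $r^{\pm 1}e^{i\theta}$). Because $\widehat{S} \longleftrightarrow \widehat{T}$ and $\left<w\right>$ is an eigenspace of $\widehat{S}$, Lemma \ref{l8} applied with the roles reversed — using that $\widehat{S}^{-1}$ also commutes with $\widehat{T}$, or equivalently that $\widehat{T}$ permutes eigenspaces of $\widehat{S}$ — forces $\widehat{T}(w) \in \left<w\right>$. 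But then $w$ would be an eigenvector of $\widehat{T}$, and since $\sigma(\widehat{T}) = \{\lambda\}$ we would get $\widehat{T}(w) = \lambda w$, placing $w \in \ker(\widehat{T} - \lambda I) = \left<(e,0)\right>$, contradicting that $w$ and $(e,0)$ span distinct fixed points. Hence $\widehat{S}$ is not hyperbolic.

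Finally, by Proposition \ref{p9} an isometry is hyperbolic precisely when its spectrum escapes the unit circle, so a non-hyperbolic $\widehat{S} \in \widehat{G}_\infty$ has $\sigma(\widehat{S}) \subseteq S^1$; in particular it is either elliptic or parabolic. If $\widehat{S}$ is elliptic it has a fixed point inside $\Sigma$ and a time-like eigenvector; combined with the fixed point $\infty$ on the boundary, its time-like eigenspace together with $\left<(e,0)\right>$ shows the time-like eigenvalue has geometric multiplicity at least two once one accounts for the decomposition, so $\widehat{S}$ is boundary elliptic rather than regular elliptic (a regular elliptic isometry commutes only with elliptic isometries and cannot fix a boundary point while centralizing a parabolic). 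The main obstacle is the elliptic sub-case: I need to argue carefully that a regular elliptic isometry cannot centralize $\widehat{T}$, which follows because Corollary \ref{c4} says regular elliptics commute only with elliptics, while a cleaner route is to note that $\widehat{S}$ fixing $\infty$ means $\widehat{S} \in \widehat{G}_\infty$ decomposes (via the stabilizer form in Proposition \ref{l}) so that its time-like eigenspace, sitting inside the $K$-block attached to $(e,0)$, has dimension at least two — forcing the boundary elliptic condition. Assembling these cases gives that $\widehat{S}$ is boundary elliptic or parabolic in $\widehat{G}_\infty$, as claimed. $\qed$
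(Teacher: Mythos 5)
Your overall strategy (commutation forces $\widehat{S}$ to fix $\infty$; then exclude hyperbolic; then use Corollary \ref{c4} to exclude regular elliptic) is sound, but it rests twice on a false identification of the eigenspace of $\widehat{T}$. You claim $\ker(\widehat{T}-\lambda I)=\left<(e,0)\right>$, citing Proposition \ref{p12}; but that proposition only says $\ker(\widehat{T}_1-\lambda I)=\left<(e,0)\right>$ for the restriction $\widehat{T}_1=\widehat{T}\restriction_K$. For the full operator one has $\ker(\widehat{T}-\lambda I)=\left<(e,0)\right>\oplus K^{\dagger}$, and $K^{\dagger}=\{(x',0):\left<x',a'\right>=0\}$ is infinite dimensional here, so the eigenspace is never one dimensional. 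Consequently, Lemma \ref{l8} only gives that $\widehat{S}$ preserves $\left<(e,0)\right>\oplus K^{\dagger}$, not that $\widehat{S}(e,0)\in\left<(e,0)\right>$; and in your hyperbolic exclusion, the second light-like eigenvector $w$ of $\widehat{S}$ only lands in $\left<(e,0)\right>\oplus K^{\dagger}$, which is not yet a contradiction as you wrote it. Both steps can be repaired by one extra observation: on this eigenspace $\widehat{\mathcal{Q}}(ce+x',0)=\|x'\|^2$, so its only light-like vectors are the multiples of $(e,0)$; since $\widehat{S}$ preserves the eigenspace and preserves $\widehat{\mathcal{Q}}$, it must preserve $\left<(e,0)\right>$ (hence fixes $\infty$), and a light-like $w$ lying in the eigenspace is forced to be a multiple of $(e,0)$, giving your contradiction. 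Your elliptic case is also stated in a muddled way; the clean argument is simply Corollary \ref{c4} (which you do cite), while the ``cleaner route'' via Proposition \ref{l} is left vague and should either be made precise or dropped.

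For comparison, the paper's proof is much shorter: it invokes Corollary \ref{c4} and Corollary \ref{c5} to conclude at once that an isometry commuting with the parabolic $\widehat{T}$ can be neither regular elliptic nor hyperbolic, and it gets $\widehat{S}\in\widehat{G}_{\infty}$ not from eigenspaces of the linear map but from the projective picture: $\widehat{S}$ permutes the fixed-point set of $\widehat{T}$ in $\overline{\Sigma}$, which is the single point $\infty$. If you fix the kernel issue as above, your linear-algebraic route is a valid alternative to Corollary \ref{c5} for excluding hyperbolics, but as written the argument has a genuine gap.
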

 \begin{proof}
 Corollary \ref{c4} and Corollary \ref{c5} tell that  parabolic isometries commute either with boundary elliptics or parabolics. Also $\w{T}$ fixes only $\infty$ and $\w{S} \longleftrightarrow \w{T}$. This implies $\w{S}$ fixes $\infty$ and hence $\widehat{S}$ lies in $\widehat{G}_{\infty}$.
\end{proof}

\begin{proposition}
Two Heisenberg translations  commute with each other if and only if they are isotropic.
\end{proposition}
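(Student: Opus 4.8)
The plan is to compute directly with the matrix form $(\lambda, a', s)$, using the Heisenberg-translation presentation from Lemma \ref{l11}. Given two such translations $\widehat{T} = (\lambda, a', s)$ and $\widehat{S} = (\lambda', b', t)$, I would form the products $\widehat{T}\widehat{S}$ and $\widehat{S}\widehat{T}$ as $3\times 3$ operator matrices (exactly the kind of computation already carried out in Proposition \ref{p11} with $U = \lambda'\,I$ and $U^{-1}(c') = c'$) and compare entries. Since $\lambda$ and $\lambda'$ are scalars of modulus $1$ they commute with everything, so the scalar prefactors $\lambda\lambda'$ match automatically; the off-diagonal entries force the genuine conditions. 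The middle-row/right-column entry gives $\lambda a' + \lambda' b'$ on one side and $\lambda' b' + \lambda a'$ on the other — these agree unconditionally — while the top-row/right-column entry produces something like $\lambda t + \lambda'\langle a', b'\rangle + \lambda s$ versus $\lambda' s + \lambda\langle b', a'\rangle + \lambda' t$, and the top-row/middle entry produces $\lambda\langle \boldsymbol{\cdot}, b'\rangle + \lambda'\langle \boldsymbol{\cdot}, a'\rangle$ on both sides, again agreeing. After cancelling the common terms, the only surviving equation is $\langle a', b'\rangle = \langle b', a'\rangle = \overline{\langle a', b'\rangle}$, i.e. $\langle b', a'\rangle \in \mathbb{R}$, which is precisely the isotropy condition of Definition \ref{isotropic}.

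Concretely, the key steps in order are: (1) write both translations in the form of Lemma \ref{l11}; (2) multiply the two matrices in each order, reading off the three potentially-distinct entries (the $(1,2)$, $(1,3)$, and $(2,3)$ slots, since the lower triangle and the diagonal are visibly equal); (3) observe that the $(2,3)$ and $(1,2)$ slots impose no condition because addition in $H$ and in $\mathbb{C}$ is commutative; (4) isolate the $(1,3)$ slot, cancel the terms $\lambda s + \lambda t$ (resp. $\lambda' s + \lambda' t$, which coincide since $\lambda = \lambda'$ is not needed — actually $\lambda, \lambda'$ may differ, but they are scalars so $\lambda s + \lambda' t$ appears symmetrically on both sides after noting $s, t \in \mathbb{C}$), leaving $\langle a', b'\rangle = \overline{\langle a', b'\rangle}$; (5) conclude that commuting is equivalent to $\langle b', a'\rangle \in \mathbb{R}$.

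One point that deserves care rather than real difficulty: the scalar prefactors $\lambda$ and $\lambda'$ attach to the whole matrix, so when multiplying $(\lambda, a', s)(\lambda', b', t)$ the overall factor is $\lambda\lambda'$, but the inner unipotent matrices multiply in the order dictated by the product; I must make sure the cross-term $\langle a', b'\rangle$ (coming from applying the functional $\langle \boldsymbol{\cdot}, a'\rangle$ to the vector $b'$, or vice versa) lands in the correct slot and with the correct argument order, since that is exactly what produces the conjugate in one product versus the plain inner product in the other. Everything else is routine bookkeeping of the sort displayed in Proposition \ref{p11}. The main (very mild) obstacle is thus simply keeping the order of the inner product straight so that the reality condition, and not something stronger like $a' = b'$, emerges.
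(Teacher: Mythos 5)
Your proposal is correct and is essentially the paper's own proof: multiply the two translations in both orders, observe that the scalar factor $\lambda\lambda'$ and all entries except the $(1,3)$ slot agree automatically, and read off that commutation is equivalent to $\left<b',a'\right>=\left<a',b'\right>$, i.e.\ $\left<b',a'\right>\in\mathbb{R}$. Your tentative intermediate expressions with $\lambda$'s scattered among the entries are not quite right, but the point of care you flag (the prefactor $\lambda\lambda'$ sits outside both products, so only the unipotent parts matter) resolves this exactly as in the paper.
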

 \begin{proof}
Let $(\lambda,a',s)$ and $(\lambda',b',t)$ be two Heisenberg translations.   We have
\begin{align*}
 (\lambda,a',s)(\lambda',b',t)&=\lambda \lambda'\left[\begin{array}{ccc}
  1   & \left<\boldsymbol{\cdot},a'\right> &   s\\
    0  &  I & a'\\
    0 & 0 & 1
 \end{array}\right]\left[\begin{array}{ccc}
  1    & \left<\boldsymbol{\cdot},b'\right> &   t\\
    0  & I & b'\\
    0 & 0 & 1
 \end{array}\right]\\
 &=\lambda \lambda'\left[\begin{array}{ccc}
      1 & \left<\boldsymbol{\cdot},b'\right>+\left<\boldsymbol{\cdot},a'\right> &   t+\left<b',a'\right>+s\\
    0  & I & b'+a'\\
    0 & 0 & 1
 \end{array}\right]\,\,\text{and}\\
 (\lambda,b',t)(\lambda,a',s)&=\lambda' \lambda\left[\begin{array}{ccc}
  1    & \left<\boldsymbol{\cdot},b'\right> &   t\\
    0  & I & b'\\
    0 & 0 & 1
 \end{array}\right]\left[\begin{array}{ccc}
  1   & \left<\boldsymbol{\cdot},a'\right> &   s\\
    0  &  I & a'\\
    0 & 0 & 1
 \end{array}\right]\\
 &=\lambda' \lambda\left[\begin{array}{ccc}
  1    & \left<\boldsymbol{\cdot},a'\right>+ \left<\boldsymbol{\cdot},b'\right> &   s +\left<a',b'\right>+t\\
    0  & I & a'+b'\\
    0 & 0 & 1
 \end{array}\right].
 \end{align*}
 This gives that $(\lambda,a',s) \longleftrightarrow (\lambda,b',t)$ if and only if they are isotropic.
 \end{proof}

\textbf{Proof of Theorem \ref{t3}.}
  Let $\widehat{S} \in \widehat{G}$ be such that $\widehat{S} \longleftrightarrow \widehat{T}$. Lemma \ref{l10} tells that $\widehat{S} \in \widehat{G}_\infty$ and it is either boundary elliptic or parabolic. This implies $\sigma(\widehat{S}) \subseteq S^1$. As $\widehat{G}_\infty$ contains only boundary elliptic, hyperbolic and parabolic isometries fixing infinity, its general isometry  $\left[\begin{array}{ccc}
  \lambda'    & \left<\boldsymbol{\cdot},U^{-1}(b')\right> &   \mu'\,t\\
    0  & U & \mu'\,b'\\
    0 & 0 & \mu'
 \end{array}\right]$ is hyperbolic if and only if $\lambda' \neq \mu'$ in view of Proposition \ref{p9}, Proposition \ref{m} and Remark \ref{R2}.  Hence a boundary elliptic or parabolic  isometry in $\widehat{G}_\infty$ is of the form  $\left[\begin{array}{ccc}
  \lambda'    & \left<\boldsymbol{\cdot},U^{-1}(b')\right> &   \lambda'\,t\\
    0  & U & \lambda'\,b'\\
    0 & 0 & \lambda'
 \end{array}\right]=\widehat{S}$ say.
 \begin{enumerate}
     \item For $\widehat{T}=\lambda\left[\begin{array}{ccc}
  1    & 0 &   s\\
    0  &  I & 0\\
    0 & 0 & 1
 \end{array}\right]$,
 \begin{align*}
   \widehat{S}\widehat{T}&=\lambda\left[\begin{array}{ccc}
  \lambda'    & \left<\boldsymbol{\cdot},U^{-1}(b')\right> &   \lambda'\,t\\
    0  & U & \lambda'\,b'\\
    0 & 0 & \lambda'
 \end{array}\right]\left[\begin{array}{ccc}
  1    & 0 &   s\\
    0  &  I & 0\\
    0 & 0 & 1
 \end{array}\right]\\
 &=\lambda\left[\begin{array}{ccc}
  \lambda'    & \left<\boldsymbol{\cdot},U^{-1}(b')\right> &   \lambda'\,s+\lambda'\,t\\
    0  & U & \lambda'\,b'\\
    0 & 0 & \lambda'
 \end{array}\right]\\
  &= \widehat{T}\widehat{S}\\
  &=\lambda\left[\begin{array}{ccc}
  1    & 0 &   s\\
    0  &  I & 0\\
    0 & 0 & 1
 \end{array}\right]\left[\begin{array}{ccc}
  \lambda'    & \left<\boldsymbol{\cdot},U^{-1}(b')\right> &   \lambda'\,t\\
    0  & U & \lambda'\,b'\\
    0 & 0 & \lambda'
 \end{array}\right].
 \end{align*}
  Hence the result.
 \item For $\widehat{T}=\lambda\left[\begin{array}{ccc}
  1    & \left<\boldsymbol{\cdot},a'\right> &   s\\
    0  &  I & a'\\
    0 & 0 & 1
 \end{array}\right]$, $a' \neq 0$,
 \begin{align*}
 \widehat{S}\widehat{T}&=\lambda\left[\begin{array}{ccc}
  \lambda'    & \left<\boldsymbol{\cdot},U^{-1}(b')\right> &   \lambda'\,t\\
    0  & U & \lambda'\,b'\\
    0 & 0 & \lambda'
 \end{array}\right]\left[\begin{array}{ccc}
  1    & \left<\boldsymbol{\cdot},a'\right> &   s\\
    0  &  I & a'\\
    0 & 0 & 1
 \end{array}\right]\\
 &=\lambda\left[\begin{array}{ccc}
  \lambda'    & \lambda'\left<\boldsymbol{\cdot},a'\right>+ \left<\boldsymbol{\cdot},U^{-1}(b')\right> &   \lambda'\,s+ \left<U(a'),b'\right>+\lambda'\,t\\
    0  & U & U(a')+\lambda'\,b'\\
    0 & 0 & \lambda'
 \end{array}\right]
 \end{align*}
  and
  \begin{align*}
  \widehat{T}\widehat{S}&=\lambda\left[\begin{array}{ccc}
  1    & \left<\boldsymbol{\cdot},a'\right> &   s\\
    0  &  I & a'\\
    0 & 0 & 1
 \end{array}\right]\left[\begin{array}{ccc}
  \lambda'    & \left<\boldsymbol{\cdot},U^{-1}(b')\right> &   \lambda'\,t\\
    0  & U & \lambda'\,b'\\
    0 & 0 & \lambda'
 \end{array}\right]\\
 &=\lambda\left[\begin{array}{ccc}
  \lambda'    &  \left<\boldsymbol{\cdot},U^{-1}(b')\right>+\left<\boldsymbol{\cdot},U^{-1}(a')\right> &    \lambda'\,t+\left<\lambda'\,b',a'\right>+\lambda'\,s\\
    0  & U & \lambda'\,b'+\lambda'\,a'\\
    0 & 0 & \lambda'
 \end{array}\right].
  \end{align*}  \end{enumerate} 

  This tells that $\widehat{S} \longleftrightarrow \widehat{T}$ if and only if $U(a')=\lambda'\,a'$ and $\left<a',b'\right> \in \mathbb{R}$. \qed

 \bibliographystyle{amsplain}

\end{document}